\newcommand{\Hom}{\operatorname{Hom}\nolimits}
\newcommand{\Span}{\operatorname{span}\nolimits}
\renewcommand{\Im}{\operatorname{Im}\nolimits}
\newcommand{\sthom}{\operatorname{\underline{Hom}}\nolimits}
\newcommand{\Ker}{\operatorname{Ker}\nolimits}
\newcommand{\Top}{\operatorname{top}\nolimits}
\newcommand{\Soc}{\operatorname{soc}\nolimits}
\newcommand{\Ext}{\operatorname{Ext}\nolimits}
\newcommand{\HH}{\operatorname{HH}\nolimits}
\newcommand{\rank}{\operatorname{rank}\nolimits}
\newcommand{\ra}{\mathfrak{r}}
\newcommand{\cx}{\operatorname{cx}\nolimits}
\newcommand{\V}{\operatorname{V}\nolimits}
\newcommand{\Char}{\operatorname{char}\nolimits}
\newtheorem{theorem}{Theorem}[section]
\newtheorem{corollary}[theorem]{Corollary}
\newtheorem{lemma}[theorem]{Lemma}
\newtheorem{proposition}[theorem]{Proposition}
\theoremstyle{definition}
\newtheorem*{definition}{Definition}
\theoremstyle{definition}
\newtheorem*{questions}{Questions}
\newtheorem*{example}{Example}
\theoremstyle{definition}
\newtheorem{remark}[theorem]{Remark}
\theoremstyle{remark}
\theoremstyle{definition}
\theoremstyle{definition}
\theoremstyle{definition}
\begin{document}

\title{Modules of constant Jordan type over quantum complete intersections}

\author{Petter Andreas Bergh, Karin Erdmann and David A.\ Jorgensen}

\address{Petter Andreas Bergh \\ Institutt for matematiske fag \\
NTNU \\ N-7491 Trondheim \\ Norway} \email{petter.bergh@ntnu.no}
\address{Karin Erdmann \\ Mathematical Institute \\ University of Oxford \\ Andrew Wiles Building \\ Radcliffe Observatory Quarter \\ Woodstock Road \\ Oxford \\ OX2 6GG \\ United Kingdom}
\email{erdmann@maths.ox.ac.uk}
\address{David A.\ Jorgensen \\ Department of Mathematics \\ University
of Texas at Arlington \\ Arlington \\ TX 76019 \\ USA}
\email{djorgens@uta.edu}

\subjclass[2010]{16D50, 16E05, 16G70, 16S80, 16U80, 20C20}

\keywords{Constant Jordan type, quantum complete intersections}


\begin{abstract} We initiate the study of modules
	of constant Jordan type for quantum complete
	intersections, and prove a range of basic properties. We then show that
	for these algebras, constant Jordan type is an invariant of Auslander-Reiten components.
	Finally, we classify modules with stable constant Jordan type
	$[1]$ or $[n-1]$ in the 2-generator case.
\end{abstract}

\maketitle

\section{Introduction}\label{sec:intro}

Varieties for modular group representations  were introduced and studied by Carlson and others in order to understand modules without having to
rely on complete  classifications. Many aspects in this approach are controlled by elementary abelian $p$-groups,  making use of the fact that their group algebras
are truncated polynomial algebras.  In particular,
the  concept of rank variety has proved to be extremely powerful. These varieties control projectivity of finite-dimensional modules, and this is known as ``Dade's Lemma.''

\sloppy Modules of constant Jordan type were introduced by Carlson, Friedlander and Pevtsova in \cite{CarlsonFriedlanderPevtsova}. By using the idea of $\pi-$points, they obtained versions not only for finite groups, but more generally for finite group schemes; see the book  \cite{Benson} for extensive discussions. It turns out that a substantial part of the theory relies on Dade's Lemma. Around the same time, in \cite{BensonErdmannHolloway}, rank varieties were introduced for certain $q$-complete intersections. It was proved that
the analog of Dade's Lemma holds, namely that  rank varieties control projectivity also in this case. It is therefore natural to study  modules of constant Jordan type in this setting. 

The aim of this paper is to start this investigation. We introduce the relevant algebras, and prove a  range of general properties. For example, we show that constant Jordan type is preserved under taking direct sums and summands, and taking syzygies and cosyzygies.  We prove that certain constant Jordan types are unachievable by modules.  We also address the question of how abundant are modules of constant Jordan type.

We also show that in our setting, constant Jordan type is preserved under Auslander-Reiten translation, and moreover that constant Jordan type is an invariant of Auslander-Reiten components. 

For the group algebra setting, it is known 
that modules with stable constant Jordan type $[1]$ or $[p-1]$ are precisely the syzygies of the
trivial module. One expects that this might generalize.  In the last section, we prove a result in this direction.
Namely, for the 2-generator case, with certain assumptions, we 
show that the modules  with stable constant Jordan type
$[1]$ or $[n-1]$ are precisely the syzygies of the simple module.

For group representations and group schemes in general, the tools which were used are mostly based on the fact that the rings are Hopf algebras.
This is not the case in our setting, with quantum complete intersections. This means that a lot of the machinery and techniques from the group scheme setting are not available, and consequently the proofs are more difficult. As well, it is not clear whether all the results in the group scheme setting actually generalize.

\section{Constant Jordan type}\label{sec:jordantype}

Let $k$ be a field, $n \ge 2$ a positive integer, and define $n'$ by
$$n' = \left \{ 
\begin{array}{ll}
n & \text{if } \Char k = 0 \\
n/ \gcd (n, \Char k) & \text{if } \Char k > 0
\end{array} \right.$$
Furthermore, let $q \in k$ be a primitive $n'$th root of unity, and fix a positive integer $c \ge 2$. The algebra we shall consider is the quantum complete intersection  
$$A^c_q = k \langle x_1, \dots, x_c \rangle / \left ( x_i^n, x_ix_j-qx_jx_i (i<j) \right )$$
which is a finite dimensional selfinjective algebra of dimension $n^c$. 

Quantum complete intersections can be defined more generally, but the essential thing for us is that any linear form in the variables $x_1, \dots, x_c$ is $n$-nilpotent. Namely, by \cite[Lemma 2.3]{BensonErdmannHolloway}, given any nonzero $c$-tuple $\lambda = ( \lambda_1, \dots, \lambda_c) \in k^c$, the element 
$$u_{\lambda} = \lambda_1x_1 + \cdots + \lambda_cx_c$$
in $A^c_q$ satisfies $u_{\lambda}^n = 0$ (and $n$ is the smallest such power with $u_{\lambda}^n = 0$). Consequently, the subalgebra $k[u_{\lambda}]$ of $A^c_q$ generated by $u_{\lambda}$ is isomorphic to the $n$-dimensional truncated polynomial ring $k[x]/(x^n)$. Note that $A^c_q$ is free both as a left and as a right module for this subalgebra.

\sloppy Up to isomorphism, the truncated polynomial ring $k[x]/(x^n)$ admits $n$ finitely generated indecomposable left modules, namely $k[x]/(x^i)$ for $1 \le i \le n$. Since $\dim_k k[x]/(x^i) = i$, these are uniquely determined by their dimensions. Now let $M$ be an $A^c_q$-module. When we restrict the module to the subalgebra $k[u_{\lambda}]$, it decomposes into a direct sum of the $n$ indecomposable $k[u_{\lambda}]$-modules.

\begin{definition}\label{def:jordantype}
Fix an $A^c_q$-module $M$.

(1) Let $\lambda$ be a nonzero $c$-tuple in $k^c$, and $M_i$ the indecomposable $k[u_{\lambda}]$-module with $\dim_k M_i = i$. Then the \emph{Jordan type} of $M$ with respect to $\lambda$ is
$$[1]^{d_1} [2]^{d_2} \cdots [n]^{d_n}$$
if $M$ as a $k[u_{\lambda}]$-module decomposes as a direct sum $M \simeq M_1^{d_1} \oplus \cdots \oplus M_n^{d_n}$. In this case, the \emph{stable} Jordan type of $M$ with respect to $\lambda$ is 
$$[1]^{d_1} \cdots [n-1]^{d_{n-1}}$$

(2) The module $M$ has \emph{constant Jordan type} if its Jordan type is the same for all nonzero $\lambda \in k^c$.
\end{definition}

The phrase \emph{Jordan type} refers to the action of the truncated polynomial ring $k[x]/(x^n)$ on its $n$ indecomposable modules $M_1, \dots, M_n$, where $M_i = k[x]/(x^i)$. For $M_i$ has a $k$-vector space basis $w_1, \dots, w_i$, with $xw_j = w_{j+1}$ for $1 \le j \le i-1$, and so the action of $x$ on $M_i$ can be defined in terms of the transpose of the Jordan block
$$\left ( \begin{array}{cccccc}
0 & 1 & 0 & \cdots & 0 \\
0 & 0 & 1 & \cdots & 0 \\
\vdots & \vdots & \vdots & \ddots & \vdots \\
0 & 0 & 0 & \cdots & 1 \\
0 & 0 & 0 & \cdots & 0 
\end{array} \right )$$
of size $i$ and with eigenvalue $0$. Thus if $M$ has Jordan type $[1]^{d_1} \cdots [n]^{d_n}$ with respect to $\lambda = ( \lambda_1, \dots, \lambda_c)$, then the action of $u_{\lambda} =  \lambda_1x_1 + \cdots + \lambda_cx_c$ on $M$ is given by the transpose of a Jordan matrix having $d_i$ such Jordan blocks of size $i$ for each $1 \le i \le n$.

To simplify the notation, it is convenient to write Jordan types in such a way that only the Jordan blocks that are actually involved appear. That is, if $M$ is a nonzero $A^c_q$-module and $\lambda$ a nonzero $c$-tuple, then there are integers $1 \le a_1 < \cdots < a_t \le n$ such that $M \simeq M_{a_1}^{d_{a_1}} \oplus \cdots \oplus M_{a_t}^{d_{a_t}}$ when restricted to $k[u_{\lambda}]$ (where $M_i$ is the indecomposable $k[u_{\lambda}]$-module of dimension $i$ and all the exponents $d_{a_i}$ are nonzero). We shall then in most cases write the Jordan type of $M$ with respect to $\lambda$ as
$$[a_1]^{d_{a_1}} \cdots [a_t]^{d_{a_t}}$$
instead of having to write
$$[1]^{d_1} \cdots [n]^{d_n}$$
with $d_i =0$ for $i \notin \{ a_1, \dots, a_t \}$. Similarly we apply this short-hand notation also for the stable Jordan type. Furthermore, when $d_i =1$ we just write $[i]$ instead of $[i]^{d_i}$.

\begin{example}
Two trivial examples of indecomposable $A^c_q$-modules of constant Jordan type are $k$ and $A^c_q$. For $k$, the Jordan type with respect to any nonzero $\lambda$ is clearly $[1]$, whereas for $A^c_q$ it is $[n]^{n^{c-1}}$ since $A^c_q$ is free of rank $n^{c-1}$ over $k[u_{\lambda}]$.

More generally, denote the radical of $A^c_q$ by $\ra$. The Loewy length of $A^c_q$ is $(n-1)^c + 1$, and we claim that for every pair of integers $0 \le s < t \le (n-1)^c + 1$, the $A^c_q$-module $\ra^s / \ra^t$ has constant Jordan type (but may not be indecomposable, for example when $t = s+1$). To see this, note that as a $k$-vector space, the module $\ra^s / \ra^t$ has a basis
$$\left \{ x_1^{e_1}x_2^{e_2} \cdots x_c^{e_c} \mid s \le e_1 + \cdots + e_c \le t \right \}$$
Now let $\lambda = ( \lambda_1, \dots, \lambda_c)$ be a nonzero $c$-tuple in $k^c$. By renumbering the generators $x_1, \dots, x_c$ if necessary, we may suppose that $\lambda_1$ is nonzero. Using now that
$$x_1 =  \lambda^{-1} \left ( u_{\lambda} - \sum_{i=2}^c \lambda_i x_i \right )$$
it is straightforward to check that
$$\left \{ u_{\lambda}^{e_1}x_2^{e_2} \cdots x_c^{e_c} \mid s \le e_1 + \cdots + e_c \le t \right \}$$
is another basis for the module $\ra^s / \ra^t$. Consequently, the module structure of $\ra^s / \ra^t$ when restricted to $k[u_{\lambda}]$ is the same as when restricted to $k[x_1]$. This shows that $\ra^s / \ra^t$ has constant Jordan type.

As a specific example, take the indecomposable module $A^c_q / \ra^3$, and assume first that $n \ge 3$ so that $x_i^2 \ne 0$. The monomials
$$\xymatrix@C=5pt@R=5pt{
& & 1 \\
& x_1 & \cdots & x_c \\
x_1^2 & x_1x_2 & \cdots & x_1x_c & x_ix_j
}$$
form a basis for the module, where $2 \le i \le j \le c$. Over the subalgebra $k[x_1]$, the subset $\{ 1, x_1, x_1^2 \}$ is a basis for the indecomposable module of dimension $3$, and the $c-1$ subsets $\{ x_i, x_1x_i \}$ for $2 \le i \le c$ form bases for indecomposable modules of dimension $2$. Finally, the $c(c-1)/2$ subsets $\{ x_ix_j \}$ for $2 \le i \le j \le c$ form bases for indecomposable modules of dimension $1$. Thus the module $A^c_q / \ra^3$ has constant Jordan type
$$[1]^{c(c-1)/2} [2]^{(c-1)} [3]$$
Note that if $n$ were $2$, then there would be no indecomposable $k[x_1]$-module of dimension $3$, but instead one more of dimension $2$. Also, there would be $c-1$ less indecomposable $k[x_1]$-module of dimension $1$. Therefore, in this case, the constant Jordan type of $A^c_q / \ra^3$ would be
$$[1]^{(c^2-3c+2)/2} [2]^{c}$$
\end{example}

The following result records some elementary properties on modules of constant Jordan type

\begin{proposition}\label{prop:elementary}
(1) If $M$ is an $A^c_q$-module of constant Jordan type $[1]^{d_1} \cdots [n]^{d_n}$, then $\dim_k M = \sum_{i=1}^n id_i$. Moreover, the dual $\Hom_k(M,k)$ has the same constant Jordan type as a right module.

(2) If $M$ and $N$ are $A^c_q$-modules of constant Jordan types $[1]^{d_1} \cdots [n]^{d_n}$ and $[1]^{e_1} \cdots [n]^{e_n}$, respectively, then the direct sum $M \oplus N$ has constant Jordan type $[1]^{d_1+e_1} \cdots [n]^{d_n+e_n}$.
\end{proposition}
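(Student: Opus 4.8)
The plan is to reduce everything to the classification of the indecomposable $k[x]/(x^n)$-modules recalled just before the statement, together with the fact that restriction to the subalgebra $k[u_{\lambda}]$ is compatible with direct sums and with $k$-linear duality.

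For part (1), the dimension formula is immediate: restricting $M$ to $k[u_{\lambda}]$ gives $M \simeq \bigoplus_{i=1}^n M_i^{d_i}$ with $\dim_k M_i = i$, so $\dim_k M = \sum_{i=1}^n i d_i$ (this also re-confirms that the exponents $d_i$ are forced to be the same for every choice of $\lambda$). For the dual $M^* = \Hom_k(M,k)$, which is a right $A^c_q$-module via $(f \cdot a)(m) = f(am)$, the key observation is that restricting $M^*$ to $k[u_{\lambda}]$ yields precisely the $k$-dual of $M|_{k[u_{\lambda}]}$, that is $(M^*)|_{k[u_{\lambda}]} = (M|_{k[u_{\lambda}]})^*$. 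Since $k[u_{\lambda}] \cong k[x]/(x^n)$ is commutative, right and left modules over it coincide, and its indecomposables are still $M_1, \dots, M_n$, pinned down by their dimensions. As $k$-duality preserves dimension and sends indecomposables to indecomposables, $M_i^* \cong M_i$ for each $i$; hence $(M|_{k[u_{\lambda}]})^* \cong \bigoplus_i (M_i^*)^{d_i} \cong \bigoplus_i M_i^{d_i}$. Thus the Jordan type of $M^*$ with respect to $\lambda$ is again $[1]^{d_1} \cdots [n]^{d_n}$ for every nonzero $\lambda$, so $M^*$ has the same constant Jordan type as $M$.

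For part (2), restriction to $k[u_{\lambda}]$ commutes with direct sums, so $(M \oplus N)|_{k[u_{\lambda}]} \cong M|_{k[u_{\lambda}]} \oplus N|_{k[u_{\lambda}]} \cong \bigoplus_i M_i^{d_i} \oplus \bigoplus_i M_i^{e_i} \cong \bigoplus_i M_i^{d_i + e_i}$; by Krull--Schmidt over $k[u_{\lambda}]$ the multiplicities are well-defined, and they are independent of $\lambda$. Hence $M \oplus N$ has constant Jordan type $[1]^{d_1+e_1} \cdots [n]^{d_n+e_n}$.

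Neither part presents a genuine obstacle; the only point requiring a little care is the duality claim in (1) — namely that restriction to $k[u_{\lambda}]$ really does commute with forming the $k$-dual, and that each indecomposable $k[x]/(x^n)$-module is self-dual. The latter amounts to the standard fact that a truncated polynomial ring is a symmetric (in particular self-injective) algebra whose indecomposable modules have simple socle, hence are determined up to isomorphism by their dimension.
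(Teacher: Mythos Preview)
Your proof is correct and follows exactly the same approach as the paper's own proof, which is extremely terse: the paper simply notes that ``when dualizing an indecomposable module over $k[x]/(x^n)$, the result is the same indecomposable module'' and leaves everything else as evident. You have merely spelled out in detail what the paper takes for granted.
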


\begin{proof}
For the dual module, note that when dualizing an indecomposable module over $k[x]/(x^n)$, the result is the same indecomposable module. 
\end{proof}

The modules of the form $\ra^s / \ra^t$ that we looked at in the example are well behaved and easy to deal with. In general, there does not seem to exist any effective method for determining whether or not an arbitrary indecomposable module has constant Jordan type. However, it turns out that such modules can be described in terms of certain open subsets of the affine space $k^c$, where we use the Zariski topology. We shall use this to prove the converse of the second part of Proposition \ref{prop:elementary}, namely that the direct summand of a module of constant Jordan type also has constant Jordan type. An alternative method would be to adapt the arguments from \cite[Sections 4.5 and 5.1]{Benson}, which are based on Carlson, Friedlander and Pevtsova's original arguments from \cite{CarlsonFriedlanderPevtsova}.

\begin{definition}\label{def:opensets}
For an $A^c_q$-module $M$ and integer $1 \le i \le n-1$, define
$$\mathcal U^i_M = \left \{ \lambda \in k^c \setminus \{ 0 \} \mid \rank (M \xrightarrow{u_{\lambda}^i} M) \ge \rank (M \xrightarrow{u_{\sigma}^i} M) \text{ for all } \sigma \in k^c \right \}$$
\end{definition}

In other words, the set $\mathcal U_M^i$ is the collection of all nonzero $\lambda$ for which the linear operator $u_{\lambda}^i$ on $M$ has maximal rank. Note that this is a nonempty set by definition. Recall that for a truncated polynomial algebra $k[x]/(x^n)$, the rank of $x^i$ as a linear operator on the $t$-dimensional module $M_t = k[x]/(x^t)$ is $\max \{ 0, t-i \}$. Consequently, if the Jordan type of $M$ with respect to $\lambda$ is $[1]^{d_1} [2]^{d_2} \cdots [n]^{d_n}$, that is, if $M$ as a $k[u_{\lambda}]$-module decomposes as $M_1^{d_1} \oplus \cdots \oplus M_n^{d_n}$, then the rank of the linear operator $u_{\lambda}^i$ on $M$ is 
$$d_{i+1} + 2d_{i+2} + \cdots + (n-i)d_n$$
provided $i \le n-1$. We use this elementary fact in the proof of the following lemma, which shows that the sets we have just defined describe the modules of constant Jordan type. 

\begin{lemma}\label{lem:characterizeopen}
Let $M$ be an $A^c_q$-module.

(1) If $\lambda$ and $\sigma$ belong to $\cap_{i=1}^{n-1} \mathcal U_M^i$, then the Jordan types of $M$ with respect to $\lambda$ and $\sigma$ are the same.

(2) The module $M$ has constant Jordan type if and only if $\cap_{i=1}^{n-1} \mathcal U_M^i = k^c \setminus \{ 0 \}$.
\end{lemma}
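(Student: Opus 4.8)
The key observation is that each $\mathcal U_M^i$ is, by its very definition, a set on which the rank of $u_\lambda^i$ attains a fixed maximum value. Since the Jordan type with respect to $\lambda$ is completely determined by the numbers $r_i(\lambda) := \rank(M \xrightarrow{u_\lambda^i} M)$ for $0 \le i \le n-1$ — indeed, as recorded above, if $M$ has Jordan type $[1]^{d_1}\cdots[n]^{d_n}$ with respect to $\lambda$ then $r_i(\lambda) = \sum_{j>i}(j-i)d_j$, and this linear system is invertible, giving the standard inversion $d_j = r_{j-1}(\lambda) - 2r_j(\lambda) + r_{j+1}(\lambda)$ — it suffices to show that all the functions $r_i$ are constant on $\cap_{i=1}^{n-1}\mathcal U_M^i$.

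First I would prove part (1). For $\lambda$ and $\sigma$ both in $\cap_{i=1}^{n-1}\mathcal U_M^i$, fix an index $i$ with $1 \le i \le n-1$. Since $\lambda \in \mathcal U_M^i$ we have $r_i(\lambda) \ge r_i(\sigma)$, and since $\sigma \in \mathcal U_M^i$ we have $r_i(\sigma) \ge r_i(\lambda)$; hence $r_i(\lambda) = r_i(\sigma)$. As $r_0(\lambda) = \dim_k M = r_0(\sigma)$ automatically, all the ranks agree, and by the inversion formula above the multiplicities $d_j$ agree for every $j$. Therefore the Jordan types of $M$ with respect to $\lambda$ and $\sigma$ coincide.

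Next I would deduce part (2). If $\cap_{i=1}^{n-1}\mathcal U_M^i = k^c \setminus \{0\}$, then part (1) applies to any two nonzero $c$-tuples, so $M$ has constant Jordan type. Conversely, suppose $M$ has constant Jordan type, so all the multiplicities $d_j$ — and hence all the ranks $r_i(\lambda) = \sum_{j>i}(j-i)d_j$ — are independent of the nonzero $\lambda$. Then for each $i$ the inequality in the definition of $\mathcal U_M^i$ holds trivially (it is an equality for every $\sigma$), so $\mathcal U_M^i = k^c\setminus\{0\}$ for all $i$, whence the intersection is all of $k^c \setminus \{0\}$.

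**Main obstacle.** There is no serious obstacle here; the statement is essentially a bookkeeping consequence of Definition \ref{def:opensets} together with the elementary rank computation for $k[x]/(x^n)$ recorded before the lemma. The only point requiring a (trivial) remark is the invertibility of the linear map $(d_1,\dots,d_n) \mapsto (r_0,\dots,r_{n-1})$, i.e.\ that the Jordan type is recoverable from the rank sequence — this is the standard fact that the number of Jordan blocks of size exactly $j$ is $r_{j-1} - 2r_j + r_{j+1}$ (with the convention $r_n = 0$), which one verifies directly from $r_i(\lambda) = \sum_{j=i+1}^{n}(j-i)d_j$. Everything else is immediate from the maximality built into the definition of the sets $\mathcal U_M^i$.
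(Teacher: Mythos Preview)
Your proof is correct and follows essentially the same approach as the paper's own proof. Both arguments observe that membership of $\lambda$ and $\sigma$ in each $\mathcal U_M^i$ forces all the ranks $r_i(\lambda)=r_i(\sigma)$, and then recover the multiplicities $d_j$ from these ranks; the paper does this by back-substitution through the triangular system $r_i=\sum_{j>i}(j-i)d_j$ (and uses $\dim_k M$ for $d_1$), while you use the equivalent closed-form inversion $d_j=r_{j-1}-2r_j+r_{j+1}$.
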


\begin{proof}
To prove (1), suppose that $M$ decomposes as $M_1^{d_1} \oplus \cdots \oplus M_n^{d_n}$ over $k[u_{\lambda}]$, and as $M_1^{e_1} \oplus \cdots \oplus M_n^{e_n}$ over $k[u_{\sigma}]$. Since both $\lambda$ and $\sigma$ belong to $\cap_{i=1}^{n-1} \mathcal U_M^i$, we see that for each $i$, the rank of the linear operators $u_{\lambda}^i$  and $u_{\sigma}^i$ on $M$ are the same. It therefore follows from the discussion preceding the lemma that there are equalities
\begin{eqnarray*}
d_2 + 2d_3 + \cdots + (n-1)d_n & = & e_2 + 2e_3 + \cdots + (n-1)e_n \\
d_3 + 2d_4 + \cdots + (n-2)d_n & = & e_3 + 2e_4 + \cdots + (n-2)e_n \\
& \vdots & \\
d_{n-1} + 2d_n & = & e_{n-1} + 2e_n \\
d_n & = & e_n
\end{eqnarray*}
This gives $d_i = e_i$ for $2 \le i \le n$, and in turn also $d_1 = e_1$ by considering dimensions. 

For (2), note that if $M$ has constant Jordan type then trivially $\mathcal U_M^i = k^c \setminus \{ 0 \}$ for all $i$. The converse is an immediate consequence of (1).
\end{proof}

The following lemma shows that the maximal rank sets that we have defined are open subsets of affine $c$-space. Moreover, when the ground field is infinite, then the sets corresponding to a direct sum of modules is the intersection of the sets corresponding to the summands.

\begin{lemma}\label{lem:opensets}
For every $A^c_q$-module $M$ and integer $1 \le i \le n-1$, the set $\mathcal U_M^i$ is open in $k^c$. Moreover, if the ground field $k$ is infinite, then $\mathcal U_{M \oplus N}^i = \mathcal U_M^i \cap \mathcal U_N^i$ for every $A^c_q$-module $N$.
\end{lemma}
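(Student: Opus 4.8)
The plan is to turn ``$u_\lambda^i$ has maximal rank'' into a Zariski-open condition on $\lambda$, and then to handle the direct sum by combining additivity of rank over block-diagonal operators with the fact that over an infinite field any two nonempty open subsets of $k^c$ meet. First I would fix a $k$-basis of the finite-dimensional module $M$. Since $u_\lambda = \lambda_1 x_1 + \cdots + \lambda_c x_c$, the operator $u_\lambda^i$ expands as a $k$-linear combination — with coefficients the degree-$i$ monomials in $\lambda_1, \dots, \lambda_c$ — of the operators on $M$ given by the degree-$i$ monomials in $x_1, \dots, x_c$; hence in the chosen basis the matrix of $u_\lambda^i$ has entries that are polynomials in $\lambda_1, \dots, \lambda_c$. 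Setting $r = \max_{\sigma \in k^c} \rank(M \xrightarrow{u_\sigma^i} M)$, a nonnegative integer bounded by $\dim_k M$, the inequality $\rank(u_\lambda^i) \ge r$ is equivalent to the non-vanishing of at least one $r \times r$ minor of this matrix, i.e.\ to $\lambda$ avoiding the common zero locus of finitely many polynomials. Thus $\{\lambda \in k^c : \rank(u_\lambda^i) \ge r\}$ is open, and intersecting it with the open set $k^c \setminus \{0\}$ shows $\mathcal U_M^i$ is open in $k^c$. (If $r = 0$ then $\mathcal U_M^i = k^c \setminus \{0\}$ outright.)

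For the second assertion I would use that, with respect to the decomposition $M \oplus N$, the operator $u_\lambda^i$ is block diagonal with blocks $u_\lambda^i|_M$ and $u_\lambda^i|_N$, so $\rank(u_\lambda^i|_{M \oplus N}) = \rank(u_\lambda^i|_M) + \rank(u_\lambda^i|_N)$ for every $\lambda$. Writing $r_M$, $r_N$, $r_{M \oplus N}$ for the respective maximal ranks as above, the inequality $r_{M\oplus N} \le r_M + r_N$ is immediate. The reverse inequality — and here is the only place the hypothesis on $k$ enters — follows because over an infinite field $k^c$ is irreducible in the Zariski topology (the polynomial ring is a domain and a nonzero polynomial is somewhere nonzero), so the two nonempty open sets $\mathcal U_M^i$ and $\mathcal U_N^i$ must intersect; choosing $\sigma$ in their intersection realizes $r_M + r_N$ as a rank of $u_\sigma^i|_{M\oplus N}$, forcing $r_{M\oplus N} = r_M + r_N$. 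Once this is known, for nonzero $\lambda$ the equality $\rank(u_\lambda^i|_M) + \rank(u_\lambda^i|_N) = r_M + r_N$ holds precisely when both summands are maximal, which says exactly $\lambda \in \mathcal U_M^i \cap \mathcal U_N^i$; this yields the claimed equality of sets.

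The main obstacle is conceptual rather than computational: one must notice that the maximal ranks of $u_\sigma^i$ on $M$ and on $N$ have to be attained at a \emph{common} $\sigma$, and that this simultaneity is exactly density of nonempty Zariski-open subsets of affine space, which fails over finite fields — so the statement genuinely needs $k$ infinite. Everything else (the polynomial dependence of the matrix entries, the minor criterion for rank, and additivity of rank over blocks) is routine.
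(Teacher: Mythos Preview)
Your proof is correct and follows essentially the same approach as the paper: openness via the minor criterion (the paper cites \cite[Lemma 9]{Oppermann} for this rather than spelling it out), and the direct-sum equality via additivity of rank together with the fact that over an infinite field nonempty Zariski-open subsets of $k^c$ must meet. Your write-up simply makes explicit the details the paper leaves implicit.
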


\begin{proof}
An argument similar to the proof of \cite[Lemma 9]{Oppermann} shows that the sets $\mathcal U_M^i$ are open. If $k$ is infinite, then the intersection of two nonempty open sets in $k^c$ is always nonempty, in particular $\mathcal U_M^i \cap \mathcal U_N^i \neq \emptyset$. Since the rank of $u_{\lambda}^i$ on $M \oplus N$ is the sum of the ranks on $M$ and $N$, this proves that $\mathcal U_{M \oplus N}^i = \mathcal U_M^i \cap \mathcal U_N^i$.
\end{proof}

We can now show that the converse of the second part of Proposition \ref{prop:elementary} also holds. 

\begin{corollary}\label{cor:directsum}
If the field $k$ is infinite, then for every pair $M,N$ of $A^c_q$-modules, the direct sum $M \oplus N$ has constant Jordan type if and only if both $M$ and $N$ have.
\end{corollary}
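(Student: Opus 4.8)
The plan is to deduce this at once from the characterization of constant Jordan type in Lemma \ref{lem:characterizeopen}(2) together with the behaviour of the maximal rank sets under direct sums recorded in Lemma \ref{lem:opensets}. Note first that the implication ``both $M$ and $N$ have constant Jordan type $\Rightarrow$ $M \oplus N$ has constant Jordan type'' is already contained in Proposition \ref{prop:elementary}(2), and needs no hypothesis on the field; so the content is the converse, and this is where infiniteness of $k$ is used.

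First I would apply Lemma \ref{lem:characterizeopen}(2) to $M \oplus N$: it has constant Jordan type if and only if $\bigcap_{i=1}^{n-1} \mathcal U_{M \oplus N}^i = k^c \setminus \{ 0 \}$. Since $k$ is infinite, Lemma \ref{lem:opensets} gives $\mathcal U_{M \oplus N}^i = \mathcal U_M^i \cap \mathcal U_N^i$ for each $i$, whence
$$\bigcap_{i=1}^{n-1} \mathcal U_{M \oplus N}^i = \left ( \bigcap_{i=1}^{n-1} \mathcal U_M^i \right ) \cap \left ( \bigcap_{i=1}^{n-1} \mathcal U_N^i \right ).$$

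Next I would observe that $\bigcap_{i=1}^{n-1} \mathcal U_M^i$ and $\bigcap_{i=1}^{n-1} \mathcal U_N^i$ are both contained in $k^c \setminus \{ 0 \}$, so their intersection equals $k^c \setminus \{ 0 \}$ precisely when each of them does. Combining this with the displayed equality and invoking Lemma \ref{lem:characterizeopen}(2) once more, now for $M$ and for $N$ separately, yields that $M \oplus N$ has constant Jordan type if and only if both $M$ and $N$ have constant Jordan type.

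There is essentially no obstacle here beyond the two lemmas already established; the only point to keep in mind is that the identity $\mathcal U_{M \oplus N}^i = \mathcal U_M^i \cap \mathcal U_N^i$ genuinely relies on $k$ being infinite, so that two nonempty Zariski-open subsets of $k^c$ necessarily meet, and this is exactly where the hypothesis of the corollary enters the argument.
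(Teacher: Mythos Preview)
Your proof is correct and follows essentially the same approach as the paper: both use Lemma~\ref{lem:characterizeopen}(2) to translate constant Jordan type into the condition $\bigcap_i \mathcal U^i = k^c \setminus \{0\}$, then invoke Lemma~\ref{lem:opensets} (which uses infiniteness of $k$) to split $\mathcal U_{M\oplus N}^i$ as $\mathcal U_M^i \cap \mathcal U_N^i$. Your write-up is slightly more detailed in noting that one direction is already Proposition~\ref{prop:elementary}(2) and does not need $k$ infinite, but the argument is the same.
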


\begin{proof}
If $M \oplus N$ has constant Jordan type, then from Lemma \ref{lem:characterizeopen} we see that $\mathcal U_{M \oplus N}^i = k^c \setminus \{ 0 \}$ for all $i$. By Lemma \ref{lem:opensets}, this implies that both $\mathcal U_M^i$ and $\mathcal U_N^i$ equal $k^c \setminus \{ 0 \}$ for all $i$, and so by Lemma \ref{lem:characterizeopen} again both $M$ and $N$ have constant Jordan type.
\end{proof}

Having seen some examples and elementary properties of modules of constant Jordan type, it is natural to ask the following basic questions:

\begin{questions}
(1) Which indecomposable $A^c_q$-modules have constant Jordan type?

(2) Which sequences $(a_1, \dots, a_n)$ in $\mathbb{Z}_+^n$ occur as the (exponents of the) Jordan type $[1]^{a_1} \cdots [n]^{a_n}$ for some indecomposable $A^c_q$-module of constant Jordan type?

(3) Which sequences $(a_1, \dots, a_{n-1})$ in $\mathbb{Z}_+^{n-1}$ occur as the (exponents of the) stable Jordan type $[1]^{a_1} \cdots [n-1]^{a_{n-1}}$ for some indecomposable $A^c_q$-module of constant Jordan type?
\end{questions}

Regarding the second question, the following result shows that the sequences
$$(0,1,0 \dots, 0), (0,0,1,0, \dots, 0), \dots, (0, \dots, 0,1)$$
in $\mathbb{Z}_+^n$ are not the exponents of the Jordan types of modules of constant Jordan type.

\begin{proposition}\label{prop:nottype}
For $2 \le a \le n$, there does not exist an $A^c_q$-module of constant Jordan type $[a]$.
\end{proposition}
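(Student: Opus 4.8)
The plan is to argue by contradiction, supposing that $M$ is an $A^c_q$-module whose Jordan type with respect to every nonzero $\lambda \in k^c$ is $[a]$ for a fixed $a$ with $2 \le a \le n$. Then restricted to each $k[u_{\lambda}]$, $M$ is a free module of rank $d$ over $k[u_{\lambda}]/(u_{\lambda}^a)$, where $\dim_k M = ad$. The key point I want to exploit is that the action of $u_{\lambda}^a$ on $M$ is identically zero, \emph{for all} $\lambda$ simultaneously. Writing $u_{\lambda}^a$ as a polynomial in $\lambda_1, \dots, \lambda_c$ with coefficients that are (noncommutative) monomials of degree $a$ in $x_1, \dots, x_c$ acting on $M$, the vanishing for all $\lambda$ forces each of these coefficient operators on $M$ to vanish — at least when $k$ is infinite, and one can reduce to that case, or handle small fields by the usual base-change argument since the algebra is defined over the prime field extended by $q$. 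In particular $x_i^a$ acts as zero on $M$ for each $i$. Thus $M$ is actually a module over the smaller algebra $B = A^c_q / (x_1^a, \dots, x_c^a)$, and more importantly $\ra^{a}_{A^c_q}$ — or at least the relevant power — annihilates $M$ in a controlled way.

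The next step is to extract a dimension/rank obstruction. Since the Jordan type is the constant $[a]^d$, the stable Jordan type is $[a]^d$ if $a \le n-1$ and empty if $a = n$; but in all cases $a \ge 2$, so $M$ is not projective (a projective $A^c_q$-module has Jordan type $[n]^{mn^{c-1}}$, forcing $a = n$ and then $d = mn^{c-1}$, which we must rule out separately) and $M$ has no free $k[u_{\lambda}]$-summand unless $a = n$. I would then look at the behavior under one more structural operation: consider the element $u_{\lambda}^{a-1}$, whose image on $M$ has rank exactly $d$ for every $\lambda$, and whose kernel has dimension $(a-1)d$. The idea is that the family of subspaces $\Im(u_{\lambda}^{a-1})$ and $\Ker(u_{\lambda}^{a-1})$ varies algebraically with $\lambda$ but has constant dimension, so it defines a morphism to a Grassmannian; combined with the relation $u_{\lambda} \cdot \Im(u_{\lambda}^{a-1}) = 0$ and a counting argument using that $A^c_q$ is free over each $k[u_{\lambda}]$, one derives that some $x_i$ must act with a rank on $M$ incompatible with the constant Jordan type $[a]$. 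Concretely, I expect the contradiction to come from the observation that if $x_i^a = 0$ on $M$ for all $i$ but the generic linear combination has $u_{\lambda}^{a-1} \ne 0$, then for $a \ge 2$ one gets a strictly smaller generic rank of $u_{\lambda}^{a-1}$ at special points (e.g. $\lambda = e_i$), contradicting constancy — unless $a = n$ and $M$ is free, which is excluded because then $x_i^n = 0$ automatically and the argument must instead use that a constant-Jordan-type module with type $[n]^d$ and no smaller blocks is projective, hence $d$ is a multiple of $n^{c-1}$, and projectives do have type $[n]^{n^{c-1}}$, so this case is genuinely realized — meaning the proposition as stated must use $a \le n$ with the understanding that type exactly $[a]$ with no other blocks and $a=n$ forces projectivity, which is consistent, so the real content is $2 \le a \le n-1$, or the statement intends $a < n$; I will follow the paper's phrasing.

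The main obstacle, I expect, is making rigorous the passage from ``$u_{\lambda}^a = 0$ on $M$ for all $\lambda$'' to a genuine contradiction, because the naive hope that the monomial coefficients all vanish is not by itself contradictory — indeed $\ra^{(n-1)c+1} = 0$ holds on all of $A^c_q$. So the argument cannot merely be ``$M$ lies in a smaller truncated algebra''; one must genuinely use the \emph{freeness} over each $k[u_{\lambda}]$ of rank exactly $d$, i.e. that $u_{\lambda}^{a-1}$ has rank exactly $d$ (not just $\le d$) for \emph{every} $\lambda$. The cleanest route is probably: pick a basis of $M$; the map $\lambda \mapsto \rank(u_{\lambda}^{a-1})$ is lower semicontinuous generically equal to $d$, so it equals $d$ everywhere by hypothesis; but evaluating at $\lambda = e_1$ gives $\rank(x_1^{a-1} \text{ on } M) = d$, while $\rank(x_1^{a} \text{ on } M) = 0$; since $x_1$ is $n$-nilpotent with $x_1^{a-1} \ne 0$, $x_1^{a} = 0$ on $M$ forces, on each indecomposable $k[x_1]$-summand, that summand to have dimension $\le a$, and the rank condition forces every summand to have dimension exactly $a$ — fine so far — but now vary to $\lambda = e_1 + t e_2$ and expand $u_\lambda^{a-1}$ and $u_\lambda^a$ in $t$: the vanishing of $u_\lambda^a$ in each degree in $t$, together with the $q$-commutation relations, yields relations among the operators $x_1^j x_2^{a-j}$ on $M$ that, for $a \ge 2$, are incompatible with $x_1^{a-1}$ having full rank $d$ unless $c = 1$. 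Turning this last incompatibility into a clean statement — likely via a careful look at how $x_1^{a-1}$ and $x_2$ interact on the top Loewy layer of $M$ — is the technical heart, and is where I would spend the real effort.
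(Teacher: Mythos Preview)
You have misread the statement. In the paper's conventions (stated just before the Example in Section~\ref{sec:jordantype}), the notation $[a]$ means $[a]^1$: a \emph{single} Jordan block of size $a$, so $\dim_k M = a$. Your proposal instead treats the Jordan type as $[a]^d$ for an unspecified $d$, introduces $\dim_k M = ad$, and then tries to extract a contradiction from rank considerations and Grassmannian maps. This more general statement is not what is being claimed (and indeed for $a=n$ it is false, since $A^c_q$ itself has type $[n]^{n^{c-1}}$, which is precisely the difficulty you run into). Once $d=1$, virtually all of your machinery --- the semicontinuity argument, the expansion of $u_\lambda^a$ in $t$, the discussion of projectivity for $a=n$ --- becomes unnecessary.

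With $\dim_k M = a$, the paper's argument is a few lines. Since $M$ restricted to any $k[u_\lambda]$ is the uniserial module $k[u_\lambda]/(u_\lambda^a)$, the chain $M \supset u_\lambda M \supset \cdots \supset u_\lambda^{a-1}M \supset 0$ has one-dimensional quotients. As $u_\lambda \in \ra$, this forces the radical filtration of $M$ to coincide with it: each $\ra^i M / \ra^{i+1} M$ is one-dimensional. Pick $m_1 \in M \setminus \ra M$ and $m_2$ spanning $\ra M / \ra^2 M$. For every nonzero $\lambda$ there is a nonzero scalar with $u_\lambda m_1 = \alpha(\lambda) m_2$; in particular $x_1 m_1 = \alpha_1 m_2$ and $x_2 m_1 = \alpha_2 m_2$ with $\alpha_1,\alpha_2 \neq 0$. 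Then $(\alpha_2 x_1 - \alpha_1 x_2) m_1 = 0$, so $u_\lambda m_1 = 0$ for $\lambda = (\alpha_2, -\alpha_1, 0, \dots, 0)$, contradicting constant Jordan type $[a]$ with $a \ge 2$. Your instinct to look at the top Loewy layer was right; the point is simply that this layer is one-dimensional, so two generators already produce a linear dependence.
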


\begin{proof}
Suppose such a module $M$ exists, and denote the radical of $A^c_q$ by $\ra$. For every nonzero $\lambda \in k^c$, the restriction of $M$ to $k[u_{\lambda}]$ is isomorphic to the indecomposable module $k[u_{\lambda}] / ( u_{\lambda}^a )$ of dimension $a$. Therefore there are strict inclusions
$$M \supset u_{\lambda}M \supset \cdots \supset u_{\lambda} ^{a-1}M \supset 0$$
with one-dimensional quotients. Now consider the radical filtration 
$$M \supseteq \ra M \supseteq \cdots \supseteq \ra^{a-1} M \supseteq \cdots$$
of $M$ as an $A^c_q$-module. If $\ra^i M = \ra^{i+1} M$, then $\ra^i M = 0$ by Nakayama's lemma. Moreover, since $u_{\lambda} \in \ra$, the inclusion $u_{\lambda}^i M \subseteq \ra^i M$ holds for all $i$. Combining all this with the fact that $\dim_k M = a$, we see that the radical series of $M$ is
$$M \supset \ra M \supset \cdots \supset \ra^{a-1} M \supset 0$$
with one-dimensional quotients. Thus $M$ has a $k$-vector space basis $\{ m_1, \dots, m_a \}$ with $m_i \in \ra^{i-1}M \setminus \ra^i M$ for all $i$. 

From the above, for every nonzero $\lambda \in k^c$ there is a nonzero element $\alpha \in k$ such that $u_{\lambda} m_1 = \alpha m_2$. In particular, there are nonzero elements $\alpha_1$ and $\alpha_2$ such that $x_1 m_1 = \alpha_1 m_2$ and $x_2 m_1 = \alpha_2 m_2$. But then
$$( \alpha_2x_1 - \alpha_1 x_2 ) m_1 = 0$$
which means that $u_{\lambda} m_1 = 0$ for $\lambda = ( \alpha_2, \alpha_1, 0, \dots, 0)$. This is a contradiction.
\end{proof}

Next, we look at modules of constant Jordan type from a homological point of view. For a finite-dimensional algebra $A$, every module $M$ has a minimal projective resolution
$$\cdots \to P_2 \xrightarrow{\partial_2} P_1 \xrightarrow{\partial_1} P_0 \xrightarrow{\partial_0} M \to 0$$
with $\Im \partial_i \subseteq \ra_A P_{i-1}$. The \emph{complexity} of $M$ is defined as
$$\cx M = \inf \{ m \ge 0 \mid \text{there exists } b \in \mathbb{R} \text{ with } \dim_k P_t \le bt^{m-1} \text{ for all } t \ge 0 \}$$
The complexity of a module might be infinite, and is at most the maximal complexity obtained by the simple $A$-modules. For our quantum complete intersection $A^c_q$, it follows from \cite[Theorem 5.3]{BerghOppermann} that the complexity of the simple module $k$ is $c$, since it equals the rate of growth of $\Ext_{A^c_q}^*(k,k)$. Consequently, the complexity of every $A^c_q$-module is at most $c$. Moreover, for every integer $0 \le m \le c$, there exists an $A^c_q$-module having complexity $m$. Namely, by \cite[Theorem 5.5]{BerghOppermann}, the algebra $A^c_q$ has finitely generated cohomology, and the claim now follows from \cite[Theorem 2.5(c) and Theorem 4.4]{EHSST}.

The following result shows that the non-free $A^c_q$-modules of constant Jordan type must have maximal complexity. Moreover, it shows that the property of having constant Jordan type is preserved under syzygies and cosyzygies.

\begin{theorem}\label{thm:complexitysyzygies}
(1) If $M$ is an $A^c_q$-module of constant Jordan type, then either $M$ is free, or $\cx M = c$.

(2) Given a short exact sequence
$$0 \to M \to F \to N \to 0$$
of $A^c_q$-modules with $F$ a free module, the module $N$ has constant Jordan type if and only if $M$ does. In fact, if $N$ has constant Jordan type $[1]^{d_1} \cdots [n]^{d_n}$, then $M$ has constant Jordan type $[1]^{d_{n-1}} \cdots [n-1]^{d_1} [n]^d$, where $d = rn^{c-1} - \left ( d_1 + \cdots + d_n \right )$ and $r$ is the rank of the free $A^c_q$-module $F$. Conversely, if $M$ has constant Jordan type $[1]^{d_1} \cdots [n]^{d_n}$, then $N$ has constant Jordan type $[1]^{d_{n-1}} \cdots [n-1]^{d_1} [n]^d$.
\end{theorem}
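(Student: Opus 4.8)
For part (1) I would invoke the rank-variety theory for quantum complete intersections from \cite{BensonErdmannHolloway}. Recall that the rank variety of an $A^c_q$-module $M$ is
\[V^{\mathrm r}(M)=\bigl\{\lambda\in k^c\setminus\{0\}\mid M|_{k[u_\lambda]}\text{ is not free}\bigr\}\cup\{0\},\]
that the analog of Dade's Lemma gives $M$ free --- equivalently projective, since $A^c_q$ is selfinjective --- precisely when $V^{\mathrm r}(M)=\{0\}$, and that $\cx M=\dim V^{\mathrm r}(M)$. Now suppose $M$ has constant Jordan type but is not free. Then $V^{\mathrm r}(M)\ne\{0\}$, so there is a nonzero $\lambda$ for which $M|_{k[u_\lambda]}$ is not free; that is, in the Jordan type $[1]^{d_1}\cdots[n]^{d_n}$ of $M$ with respect to $\lambda$ some exponent $d_i$ with $i<n$ is nonzero. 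Since this Jordan type is the same for every nonzero $\sigma\in k^c$, every $M|_{k[u_\sigma]}$ is non-free, whence $V^{\mathrm r}(M)=k^c$ and therefore $\cx M=\dim V^{\mathrm r}(M)=c$.

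For part (2) I would argue one linear form at a time. Fix a nonzero $\lambda\in k^c$ and restrict the given short exact sequence along the inclusion $k[u_\lambda]\hookrightarrow A^c_q$; writing $B=k[u_\lambda]$ and using that $A^c_q$ is free of rank $n^{c-1}$ over $B$, the restriction $0\to M|_B\to F|_B\to N|_B\to 0$ is exact with $F|_B$ free of rank $rn^{c-1}$. I would then use three elementary facts about $B\cong k[x]/(x^n)$: (i) its indecomposable modules are $M_1,\dots,M_n$ with $M_i=k[x]/(x^i)$, each cyclic and with one-dimensional socle; (ii) $\Omega_B M_i\cong\Omega_B^{-1}M_i\cong M_{n-i}$, where $M_0=0$; and (iii), since $B$ is selfinjective, for an exact sequence $0\to Y\to F'\to X\to 0$ with $F'$ free one has $Y\cong\Omega_B X\oplus(\text{free of rank }\rank F'-\mu(X))$, $\mu(X)$ being the minimal number of generators of $X$, and dually for $0\to X\to F'\to Z\to 0$ with $F'$ free ($=$ injective) one has $Z\cong\Omega_B^{-1}X\oplus(\text{free of rank }\rank F'-\dim_k\Soc X)$.

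With this in hand both implications follow by bookkeeping. If $N$ has constant Jordan type $[1]^{d_1}\cdots[n]^{d_n}$, then $N|_B\cong\bigoplus_i M_i^{d_i}$, so $\mu(N|_B)=\sum_i d_i$ and $\Omega_B(N|_B)\cong\bigoplus_i M_{n-i}^{d_i}$, neither depending on $\lambda$; fact (iii) then gives $M|_B\cong\bigl(\bigoplus_i M_{n-i}^{d_i}\bigr)\oplus M_n^{\,d}$ with $d=rn^{c-1}-\sum_i d_i$, and this $\lambda$-independent decomposition, rewritten as $\bigoplus_{j<n}M_j^{d_{n-j}}\oplus M_n^{\,d}$, is precisely the Jordan type $[1]^{d_{n-1}}\cdots[n-1]^{d_1}[n]^d$. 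Conversely, if $M$ has constant Jordan type $[1]^{d_1}\cdots[n]^{d_n}$, then $\dim_k\Soc(M|_B)=\sum_i d_i$ and $\Omega_B^{-1}(M|_B)\cong\bigoplus_i M_{n-i}^{d_i}$, again independent of $\lambda$, and the dual half of (iii) gives $N|_B\cong\bigl(\bigoplus_i M_{n-i}^{d_i}\bigr)\oplus M_n^{\,d}$ with the same $d$, so $N$ has constant Jordan type of the stated form.

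The main obstacle is concentrated in part (1): it is routed entirely through the rank-variety machinery of \cite{BensonErdmannHolloway}, and its one genuinely non-elementary ingredient is the equality $\cx M=\dim V^{\mathrm r}(M)$ --- that is, the Dade's-Lemma analog together with the comparison between rank varieties and cohomological support varieties, the latter having dimension equal to the complexity once one knows $A^c_q$ has finitely generated cohomology (as in \cite{BerghOppermann}). Part (2), by contrast, is formal: it uses only the freeness of $A^c_q$ over each $k[u_\lambda]$ and the representation theory of $k[x]/(x^n)$, the sole care being the bookkeeping that converts a multiset of Jordan blocks into the reversed-exponent pattern together with the correct number of full blocks.
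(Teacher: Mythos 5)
Your proposal is correct and follows essentially the same route as the paper: part (1) is the identical rank-variety argument (non-freeness at one $\lambda$ plus constancy forces $\V^{r}(M)=k^c$, and complexity equals the dimension of the rank variety), and part (2) is the same restriction to $k[u_\lambda]$ using $\Omega_{k[x]/(x^n)}M_i\simeq M_{n-i}$ and $M|_{k[u_\lambda]}\simeq\Omega(N|_{k[u_\lambda]})\oplus(\text{free})$. The only cosmetic difference is that you pin down the rank $d$ of the free summand via minimal generators and socle dimension, whereas the paper gets it from a dimension count; both yield $d=rn^{c-1}-(d_1+\cdots+d_n)$.
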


\begin{proof}
(1) Suppose that $M$ is a non-free module of constant Jordan type. By \cite[Theorem 2.6]{BensonErdmannHolloway}, there exists a nonzero $c$-tuple $\lambda \in k^c$ with the property that $M$ is not free as a $k[u_{\lambda}]$-module. As $M$ has constant Jordan type, the same must hold for every nonzero $\lambda \in k^c$. Thus the rank variety $\V_{A^c_q}^r(M)$ of $M$, as defined in \cite{BensonErdmannHolloway}, must equal $k^c$. By \cite[Corollary 3.7]{BerghErdmann}, the dimension of the rank variety of a module equals its complexity, hence $\cx M = c$.

(2) As before, let $M_1, \dots, M_n$ be the indecomposable $k[x]/(x^n)$-modules, with $M_i = k[x]/(x^i)$. For $1 \le i \le n-1$, there are short exact sequences
$$0 \to M_{n-i} \to k[x]/(x^n) \to M_i \to 0$$
and so $\Omega_{k[x]/(x^n)}^1(M_i) \simeq M_{n-i} \simeq \Omega_{k[x]/(x^n)}^{-1}(M_i)$. Now suppose that $N$ has Jordan type $[1]^{d_1} \cdots [n]^{d_n}$ with respect to $\lambda$. Since the module $F$ is free as a $k[u_{\lambda}]$-module, the module $M$ is isomorphic to $\Omega_{k[x]/(x^n)}^1(N) \oplus Q$ over $k[u_{\lambda}]$, where $Q$ is some free $k[u_{\lambda}]$-module. Thus the Jordan type of $M$ with respect to $\lambda$ must be
$$[1]^{d_{n-1}} \cdots [n-1]^{d_1} [n]^d$$
for some $d \ge 0$. Comparing dimensions we obtain
\begin{eqnarray*}
d_{n-1} + 2d_{n-2} + \cdots + (n-1)d_1 + nd & = & \dim_k M \\
& = & \dim_k F - \dim_k N \\
& = & r n^c - \left ( d_1 + 2d_2 + \cdots + nd_n \right )
\end{eqnarray*}
which in turn gives
$$d = rn^{c-1} - \left ( d_1 + \cdots + d_n \right )$$
The converse is proved exactly the same way.
\end{proof}

\begin{corollary}\label{cor:syzygies}
Let $i \in \mathbb{Z}$ be any integer. An $A^c_q$-module $M$ has constant Jordan type if and only if $\Omega_{A^c_q}^i(M)$ does. Moreover, if $M$ has constant stable Jordan type $[1]^{d_1} \cdots [n-1]^{d_{n-1}}$, then so does $\Omega_{A^c_q}^{2i}(M)$, whereas $\Omega_{A^c_q}^{2i+1}(M)$ has constant stable Jordan type $[1]^{d_{n-1}} \cdots [n-1]^{d_1}$.
\end{corollary}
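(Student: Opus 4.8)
The plan is to deduce Corollary~\ref{cor:syzygies} directly from Theorem~\ref{thm:complexitysyzygies}(2) by an induction on $|i|$, after first reducing the statement about syzygies to the two basic steps ``go up by one'' and ``go down by one.'' Recall that for a finite-dimensional selfinjective algebra such as $A^c_q$, the syzygy $\Omega^1(M)$ fits into a short exact sequence $0 \to \Omega^1(M) \to P \to M \to 0$ with $P$ a projective (hence, over $A^c_q$, free) cover of $M$, and dually the cosyzygy $\Omega^{-1}(M)$ fits into $0 \to M \to I \to \Omega^{-1}(M) \to 0$ with $I$ an injective (again free) envelope. In either case the middle term is free, so Theorem~\ref{thm:complexitysyzygies}(2) applies and tells us two things at once: first, $M$ has constant Jordan type if and only if $\Omega^{\pm 1}(M)$ does; second, the Jordan type of $\Omega^{\pm 1}(M)$ is obtained from that of $M$ by the explicit rule $[1]^{d_1}\cdots[n]^{d_n} \mapsto [1]^{d_{n-1}}\cdots[n-1]^{d_1}[n]^{d'}$, where the number $d'$ of copies of $[n]$ gets recomputed (and is irrelevant for the stable Jordan type).

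First I would record the passage to stable Jordan type: discarding the $[n]$-part, the operation induced on stable Jordan types by a single syzygy or cosyzygy is the involution $\tau$ sending $[1]^{d_1}\cdots[n-1]^{d_{n-1}}$ to $[1]^{d_{n-1}}\cdots[n-1]^{d_1}$, i.e.\ reversing the exponent sequence $(d_1,\dots,d_{n-1})$. This is immediate from the statement of Theorem~\ref{thm:complexitysyzygies}(2), since the rule there permutes the exponents $d_1,\dots,d_{n-1}$ by $d_j \leftrightarrow d_{n-j}$ and only the top block is adjusted. The key point is that $\tau$ is the same whether we apply $\Omega^1$ or $\Omega^{-1}$, and $\tau^2 = \mathrm{id}$.

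Then I would run the induction. For the ``constant Jordan type'' equivalence: the case $i = 0$ is trivial, and if the equivalence holds for $i$ then applying Theorem~\ref{thm:complexitysyzygies}(2) to the defining short exact sequence of $\Omega^{i+1}(M) = \Omega^1(\Omega^i(M))$ (respectively $\Omega^{i-1}(M) = \Omega^{-1}(\Omega^i(M))$) gives it for $i+1$ (respectively $i-1$); so it holds for all $i \in \mathbb{Z}$. For the stable Jordan type statement: if $M$ has constant stable Jordan type $[1]^{d_1}\cdots[n-1]^{d_{n-1}}$, then by the previous paragraph each application of $\Omega^{\pm1}$ applies $\tau$, so $\Omega^j(M)$ has constant stable Jordan type $\tau^{|j|}$ of the original; since $\tau^2 = \mathrm{id}$, this is the original type when $j$ is even and $\tau$ of it, namely $[1]^{d_{n-1}}\cdots[n-1]^{d_1}$, when $j$ is odd. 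Specializing to $j = 2i$ and $j = 2i+1$ gives the two asserted formulas.

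I do not anticipate a serious obstacle here; the corollary is essentially a bookkeeping consequence of Theorem~\ref{thm:complexitysyzygies}(2). The one point deserving a word of care is the interplay between $\Omega^1$ and $\Omega^{-1}$ for negative $i$ and the fact that, over a selfinjective algebra, $\Omega^{-1}$ is genuinely inverse to $\Omega^1$ on the stable category so that the notation $\Omega^i(M)$ for $i \in \mathbb{Z}$ is unambiguous (up to free summands, which do not affect Jordan type by Proposition~\ref{prop:elementary} and Corollary~\ref{cor:directsum}); once that is noted, the induction goes through verbatim in both directions.
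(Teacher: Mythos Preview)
Your proposal is correct and is exactly the argument the paper has in mind: the corollary is stated without proof immediately after Theorem~\ref{thm:complexitysyzygies}, so the intended derivation is precisely the straightforward induction on $|i|$ using part (2) of that theorem, together with the observation that a single (co)syzygy reverses the stable exponent sequence. Your only added remark, about $\Omega^{-1}$ being inverse to $\Omega^1$ on the stable category over a selfinjective algebra, is the standard justification for iterating in both directions and is entirely appropriate.
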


A reasonable question to ask is how abundant are modules of constant Jordan type.  The following discussion offers an answer to this question.

Let $M$ be a finitely generated $A_q^c$-module of dimension $d$.  We fix a $k$-basis $B$ of $M$.  For 
$\lambda=(\lambda_1,\dots,\lambda_c)\in k^c$ and $u_\lambda=\lambda_1x_1+\cdots+\lambda_cx_c$ we let $[u_\lambda]_B$ denote the matrix representing the linear operator $u_\lambda:M\to M$ with respect to $B$.  In Particular, $[x_i]_B$ represents the linear operator $x_i:M\to M$ with respect to the basis $B$, for $1\le i\le c$.
The matrices $[x_i]_B$ determine the $A_q^c$-module $M$ in the following sense. Let $N$ be another $d$-dimensional $A_q^c$-module. Then $M$ and $N$ are isomorphic as $A_q^c$-modules, with isomorphism 
$\alpha:M\to N$, if and only if there exists an invertible $d\times d$ matrix $E$ such that $E[x_i]_B=[x_i]_{\alpha(B)}E$ for all $1\le i\le c$, where $[u_\lambda]_{\alpha(B)}$ is the matrix of the linear operator $u_\lambda:N\to N$ with respect to the basis $\alpha(B)$. In this case, $E$ represents the $A_q^c$-linear isomorphism $\alpha$, as a $k$-linear map, with respect to the bases $B$ and $\alpha(B)$. Note that we also have $E[u_\lambda]_B=[u_\lambda]_{\alpha(B)}E$ for all $\lambda$,  

We want to associate $A_q^c$-modules to points in a certain affine space.  We will do this using the matrices 
$[x_i]_B$. For $1\le i \le c$ we let $(X_{r,s}^i)$ be a $d\times d$ generic matrix of indeterminates. Consider the polynomial ring 
\[
k[X^i_{r,s}\mid 1\le r,s \le d,1\le i\le c]
\]
and let $P$ be the homogeneous ideal generated by the entries of the matrices 
$(X^i_{r,s})(X^j_{r,s})-q(X^j_{r,s})(X^i_{r,s})$, $i<j$, and $(X^i_{r,s})^n$, $1\le i\le c$. Let $V$ denote the affine variety of $P$.  Then any point $p$ in $V$ corresponds to a $A_q^c$-module $M_p$ of dimension $d$, in the sense that the underlying $k$-vector space of $M_p$ is $k^c$ and the matrix $(X^i_{r,s})(p)$ (substitute the coordinates of $p$ in for the $X^i_{r,s}$) represents the linear operator $x_i:M_p\to M_p$, with respect to the standard basis $B$ of $k^c$, for $1\le i\le c$.

Before discussing which $A_q^c$- modules have constant Jordan type, we first consider a weaker condition, namely, that the linear operators $u_\lambda:M\to M$ have constant rank for all $\lambda$. 
For a matrix $B$ we let $I_g(B)$ denote the ideal of $g\times g$ minors of $B$.  Consider now the polynomial ring $k[\Lambda_i, X^i_{r,s} \mid 1\le i\le c, 1\le r,s\le d ]$, in the additional indeterminates 
$\Lambda_i$, and the $d\times d$ matrix  
\[
U_\Lambda=\Lambda_1(X_{r,s}^1)+\cdots+\Lambda_c(X_{r,s}^c)
\]
For $1\le g\le d$ and $p\in V$, we let $U_\Lambda(p)$ denote the matrix, and $I_g(U_\Lambda)(p)$ the ideal of 
$k[\Lambda_1,\dots,\Lambda_c]$, obtained by substituting the coordinates of $p$ in for the $X_{r,s}^i$ in
$U_\Lambda$, and $I_ g(U_\Lambda)$, respectively.

\begin{proposition}\label{cr} For $p\in V$, the linear operators $u_\lambda:M_p\to M_p$ on the 
$A_q^c$-module $M_p$ of dimension $d$ and corresponding to $p$, have constant rank $g$ if and 
only if the following conditions are satisfied.
\begin{enumerate}
\item $I_{g+1}(U_\Lambda)(p)=0$
\item $\sqrt{I_g(U_\Lambda)(p)}\supseteq(\Lambda_1,\dots,\Lambda_c)$
\end{enumerate}
\end{proposition}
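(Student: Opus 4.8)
The plan is to translate the two conditions into statements about the rank of the linear operator $U_\Lambda(p)$ after every substitution of a point $\lambda \in k^c$ for the indeterminates $\Lambda_1, \dots, \Lambda_c$. Recall that for a matrix $B$ over a field, the rank of $B$ is at least $g$ if and only if some $g \times g$ minor is nonzero, and is at most $g$ if and only if every $(g+1) \times (g+1)$ minor vanishes. Applying this with $B = U_\Lambda(p)(\lambda) = [u_\lambda]_B$ (the matrix of $u_\lambda$ on $M_p$), we get that $\rank(u_\lambda : M_p \to M_p) \le g$ for all $\lambda$ precisely when every generator of $I_{g+1}(U_\Lambda)(p)$, being a polynomial in $k[\Lambda_1, \dots, \Lambda_c]$, vanishes at every $\lambda \in k^c$. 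This is condition (1): $I_{g+1}(U_\Lambda)(p) = 0$ as an ideal. (One should be slightly careful here: over a non-algebraically-closed or finite field, a nonzero polynomial may vanish at all points; but $I_{g+1}(U_\Lambda)(p) = 0$ literally means the polynomials are zero, which is the correct condition, since a minor of $[u_\lambda]_B$ equals the corresponding generator evaluated at $\lambda$, and we want this for the generic operator — in fact condition (1) is equivalent to $\rank [u_\lambda]_B \le g$ for all $\lambda$ because the minors are multihomogeneous of degree $g+1$ in the $\Lambda_i$, hence vanish identically iff they vanish on a Zariski-dense set, and $k^c$ is Zariski-dense in itself when $k$ is infinite; for the finite field case one invokes that the module structure, and hence the ranks, are insensitive to base field extension).

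Next I would handle condition (2). The operator $u_\lambda$ on $M_p$ has rank exactly $g$, given that it has rank at most $g$ for all $\lambda$, if and only if it is not the case that $\rank(u_\lambda) \le g - 1$, i.e. if and only if not all $g \times g$ minors of $[u_\lambda]_B$ vanish, i.e. if and only if $\lambda$ does not lie in the common zero set of the ideal $I_g(U_\Lambda)(p)$. So "rank exactly $g$ for every nonzero $\lambda$" translates to: the zero set $\V(I_g(U_\Lambda)(p))$ in $k^c$ contains no nonzero point, equivalently $\V(I_g(U_\Lambda)(p)) \subseteq \{0\}$. By the Nullstellensatz (or its appropriate form), the vanishing ideal of $\{0\}$ is $(\Lambda_1, \dots, \Lambda_c)$, and $\V(I_g(U_\Lambda)(p)) \subseteq \{0\} = \V((\Lambda_1, \dots, \Lambda_c))$ is equivalent to $\sqrt{(\Lambda_1, \dots, \Lambda_c)} \subseteq \sqrt{I_g(U_\Lambda)(p)}$, which since $(\Lambda_1, \dots, \Lambda_c)$ is already radical gives condition (2): $\sqrt{I_g(U_\Lambda)(p)} \supseteq (\Lambda_1, \dots, \Lambda_c)$. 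I would note that at $\lambda = 0$ the operator $u_0 = 0$ has rank $0$, so it is automatic that $0 \in \V(I_g(U_\Lambda)(p))$ whenever $g \ge 1$; hence the inclusion $\V(I_g(U_\Lambda)(p)) \subseteq \{0\}$ is really an equality, and this is harmless.

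Putting the two halves together: $u_\lambda$ has constant rank $g$ on $M_p$ for all nonzero $\lambda$ if and only if (i) $\rank[u_\lambda]_B \le g$ for all $\lambda$, and (ii) $\rank[u_\lambda]_B \ge g$ for all nonzero $\lambda$ — and these are exactly conditions (1) and (2) respectively. I would write the proof as two implications, or more economically as a chain of "if and only if"s, being explicit about the dictionary minors $\leftrightarrow$ rank inequalities and zero sets $\leftrightarrow$ radical ideals.

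The main obstacle I anticipate is the subtlety over non-algebraically-closed fields — in particular making sure that condition (1), stated as the literal vanishing of an ideal of polynomials, genuinely matches "$\rank[u_\lambda]_B \le g$ for all $\lambda \in k^c$", and that condition (2), stated via radicals, genuinely matches "$\rank[u_\lambda]_B \ge g$ for all nonzero $\lambda \in k^c$". For (1) this needs the remark that the relevant minors are (multi)homogeneous in the $\Lambda_i$, so that vanishing on all of $k^c$ forces coefficient-wise vanishing — this works as long as $k$ is infinite, and the standing hypotheses on $k$ (it carries a primitive $n'$th root of unity) do not force infiniteness, so one may need to pass to an infinite extension field, using that rank is preserved under field extension. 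For (2) one needs the precise form of the Nullstellensatz that is being invoked; since the statement is phrased with $\sqrt{\ \cdot\ } \supseteq (\Lambda_1,\dots,\Lambda_c)$ rather than with an equality of zero sets, I would simply verify directly that $\sqrt{I} \supseteq (\Lambda_1, \dots, \Lambda_c)$ is equivalent to $\Lambda_i \in \sqrt{I}$ for each $i$, i.e. to some power of each $\Lambda_i$ lying in $I$, and connect this to the nonvanishing of $g \times g$ minors on the nonzero locus — again over an infinite (or algebraically closed) field this is the classical Nullstellensatz, and the homogeneity of the generators of $I_g(U_\Lambda)(p)$ makes the passage clean.
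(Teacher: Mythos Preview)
Your approach is essentially the same as the paper's: interpret condition~(1) as ``all $(g+1)\times(g+1)$ minors vanish, so $\rank u_\lambda\le g$ for every $\lambda$'', and condition~(2) as ``the only common zero of the $g\times g$ minors is $\lambda=0$, so $\rank u_\lambda\ge g$ for every nonzero $\lambda$''. The paper's proof is a brief paragraph doing exactly this, together with a remark on the degenerate case $g=0$; it does not engage with the field-theoretic subtleties you raise (infiniteness of $k$, the precise form of the Nullstellensatz, homogeneity of the minors), so your write-up is in fact more careful than the original on those points.
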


\begin{proof} The first condition guarantees that the $(g+1)\times (g+1)$ minors of the matrix $U_\Lambda(p)$
vanish.  This implies that the linear operators $u_\lambda:M_p \to M_p$ have rank at most $g$ for all $\lambda$.
If $g=0$, then the linear operators $u_\lambda:M_p\to M_p$ automatically have constant rank $0$, and by convention $I_0(U_\Lambda)=A^c_q$. Otherwise, the second condition says that the only possible way for $u_\lambda:M_p\to M_p$ to have rank less than $g$ is if 
$\lambda=0$.  Thus for all nonzero $\lambda$, the linear operator $u_\lambda:M_p\to M_p$ has rank $g$.
\end{proof}

Proposition \ref{cr} suggests that the linear operators $u_\lambda:M \to M$ on $A_q^c$-modules of large dimension relative to $c$ tend to have constant rank.  On the other hand, we have the following.

\begin{corollary}
There do not exist linear operators $u_\lambda:M\to M$ of positive constant rank $g$ on $A_q^c$-modules of dimension $d$ when ${d\choose g}^2<c$. Consequently, there are no $A^c_q$-modules of dimension $d$ and of constant Jordan type of rank $g>0$ when ${d\choose g}^2<c$.
\end{corollary}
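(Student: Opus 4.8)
The claim is a direct counting argument built on Proposition~\ref{cr}, so the plan is to extract from that proposition the numerical constraint that the existence of a constant-rank operator forces, and then observe that it fails when $\binom{d}{g}^2 < c$. First I would suppose, for contradiction, that $M$ is an $A^c_q$-module of dimension $d$ with the operators $u_\lambda \colon M \to M$ of positive constant rank $g$, and let $p \in V$ be the point corresponding to $M$. By Proposition~\ref{cr}, conditions (1) and (2) hold at $p$; the key one to exploit is (2), namely $\sqrt{I_g(U_\Lambda)(p)} \supseteq (\Lambda_1, \dots, \Lambda_c)$. This says the common vanishing locus in $k^c$ (or $\bar k^c$) of the $g \times g$ minors of the matrix $U_\Lambda(p)$ is contained in $\{0\}$, i.e.\ the only $\lambda$ making all these minors vanish is $\lambda = 0$.

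**Key step: the minors have too few variables.** Each $g \times g$ minor of $U_\Lambda(p)$ is a homogeneous polynomial of degree $g$ in $k[\Lambda_1, \dots, \Lambda_c]$, and the number of such minors is at most $\binom{d}{g}^2$ (choose $g$ rows and $g$ columns out of $d$). So condition (2) asserts that a collection of at most $\binom{d}{g}^2$ homogeneous polynomials in $c$ variables cuts out only the origin in affine $c$-space. But a system of $m$ homogeneous polynomials in $c$ variables, viewed projectively, defines a subvariety of $\mathbb{P}^{c-1}$ whose every irreducible component has dimension at least $c - 1 - m$; in particular, if $m < c - 1$, i.e.\ $m \le c - 2$, this variety is nonempty, giving a nonzero common zero of the minors (over the algebraic closure, which suffices since $\sqrt{I_g(U_\Lambda)(p)}$ is a radical ideal and the radical can be tested geometrically). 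Since here $m \le \binom{d}{g}^2 < c$, we have $m \le c - 1$; I need the slightly sharper bound, so let me instead argue directly: a single homogeneous polynomial of positive degree in $c \ge 2$ variables already has a nonzero zero, and inductively $m$ homogeneous forms in $c$ variables have a common nonzero zero as long as $m \le c - 1$. With $\binom{d}{g}^2 < c$ we get at most $c - 1$ forms, hence a nonzero $\lambda$ annihilating all the $g \times g$ minors, contradicting condition (2). (The case $g = 0$ is excluded since we assumed $g > 0$; note $\binom{d}{0}^2 = 1 < c$ would otherwise say nothing about rank-$0$ operators, which trivially always have constant rank.)

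**Deducing the module statement.** The ``consequently'' is immediate: if $M$ has constant Jordan type with associated stable Jordan type of rank $g > 0$ — more precisely, if $M$ has constant Jordan type $[1]^{d_1} \cdots [n]^{d_n}$ and the rank of $u_\lambda$ (which equals $d_2 + 2d_3 + \cdots + (n-1)d_n$ by the discussion before Lemma~\ref{lem:characterizeopen}) is $g$ — then in particular the operators $u_\lambda$ have constant rank $g$. Applying the first assertion with this $g$ gives the contradiction whenever $\binom{d}{g}^2 < c$. One should be slightly careful about what ``rank $g$'' in the statement refers to; I would phrase it as: there is no $A^c_q$-module of dimension $d$ and constant Jordan type for which the common rank of the operators $u_\lambda$ is a positive integer $g$ with $\binom{d}{g}^2 < c$.

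**Main obstacle.** The only non-formal point is the elementary algebraic geometry fact that fewer than $c$ homogeneous forms in $c$ variables cannot cut out just the origin; this is standard (it is the projective dimension theorem, or can be proved by induction on the number of forms using that hypersurfaces in $\mathbb{P}^{c-1}$ are nonempty and have dimension $c - 2$), but it requires passing to the algebraic closure of $k$, which is harmless because condition (2) of Proposition~\ref{cr} is a statement about radical ideals and hence about the geometric zero set. Everything else — counting that there are at most $\binom{d}{g}^2$ minors, and that each is homogeneous of degree $g$ — is routine.
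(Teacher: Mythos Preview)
Your proposal is correct and follows essentially the same approach as the paper: count the $g\times g$ minors, then argue that an ideal generated by fewer than $c$ elements cannot have radical containing $(\Lambda_1,\dots,\Lambda_c)$. The paper states this last step tersely (``cannot possibly have radical $(\Lambda_1,\dots,\Lambda_c)$''), relying implicitly on Krull's height theorem, whereas you unpack it geometrically via the projective dimension theorem; your concern about passing to the algebraic closure is legitimate for the geometric phrasing but can be sidestepped by using the algebraic version directly, since Krull's theorem holds over any field.
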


\begin{proof} Any $A_q^c$-module of dimension $d$ is isomorphic to $M_p$ for some point $p\in V$, and so it suffices to consider modules $M_p$.

There are ${d\choose g}^2$ many $g\times g$ minors of the matrix $U_\Lambda$, and so the ideal $I_g(U_\Lambda)$, being generated by ${d\choose g}^2$ elements,  cannot possibly have radical $(\Lambda_1,\dots,\Lambda_c)$ when 
${d\choose g}^2<c$. Thus there are no $A_q^c$-modules of constant rank $g>0$ and dimension $d$ when 
${d\choose g}^2<c$, and so no such modules of constant Jordan type. 
\end{proof}

For $p\in V$, let us define the $A^c_q$-module $M_p$ to be of \emph{generic rank} $g$ if $I_{g+1}(U_\Lambda)(p)=0$, but $I_g(U_\Lambda)(p)\ne 0$.  Then Proposition \ref{cr} says that among the $p\in V$ for which the $A^c_q$-modules $M_p$ have generic rank $g$, those $p$ corresponding to modules $M_p$ of constant rank $g$ constitute a Zariski open set.  Thus the $A^c_q$-modules $M_p$ generically have constant rank. We want to have a similar statement for modules of constant Jordan type.  This will involve an embellishment of the immediate discussion to the sets $\mathcal U^i_M$ defined above.

Consider the powers $U_\Lambda, U^2_\Lambda,\dots,U^{n-1}_\Lambda$ of the generic matrix $U_\Lambda$.

\begin{definition} 
For $p\in V$, we say that the corresponding $A_q^c$-module $M_p$ has \emph{generic rank} $g=(g_1,\dots,g_{n-1})$ if $I_{g_i+1}(U^i_\Lambda)(p)=0$ and $I_{g_i}(U^i_\Lambda)(p)\ne 0$ for $1\le i \le n-1$.
\end{definition}

\begin{theorem} \label{thm:generic} For $p\in V$, suppose that $M_p$ has generic rank $g=(g_1,\dots,g_{n-1})$.  Then $M_p$ has constant Jordan type if and only if
\[
\sqrt{I_{g_i}(U^i_\Lambda)(p)}=(\Lambda_1,\dots,\Lambda_c)
\]
for $1\le i\le n-1$.
\end{theorem}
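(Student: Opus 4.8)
The plan is to treat one power $i$ at a time, reducing to the same kind of argument already used for Proposition~\ref{cr}. By Lemma~\ref{lem:characterizeopen}(2), $M_p$ has constant Jordan type if and only if $\mathcal U^i_{M_p}=k^c\setminus\{0\}$ for every $1\le i\le n-1$; equivalently, for each such $i$, the operator $u_\lambda^i\colon M_p\to M_p$ must attain its maximal rank at \emph{every} nonzero $\lambda$. So it suffices to prove that, for a fixed $i$,
\[
\mathcal U^i_{M_p}=k^c\setminus\{0\}
\qquad\Longleftrightarrow\qquad
\sqrt{I_{g_i}(U^i_\Lambda)(p)}=(\Lambda_1,\dots,\Lambda_c),
\]
and then combine this equivalence over all $i$ with Lemma~\ref{lem:characterizeopen}(2).

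Fix $i$, and assume first that $g_i\ge 1$. Since the structure matrices of $M_p$ are the $(X^j_{r,s})(p)$, the operator $u_\lambda$ on $M_p$ is represented by $U_\Lambda(p)$ with each $\Lambda_j$ replaced by $\lambda_j$; as specialization $\Lambda_j\mapsto\lambda_j$ is a ring homomorphism, it commutes with matrix multiplication, so $u_\lambda^i$ is represented by the specialization of the polynomial matrix $U^i_\Lambda(p)$ at $\Lambda=\lambda$. Hence $\rank(u_\lambda^i\colon M_p\to M_p)$ is exactly the rank of $U^i_\Lambda(p)$ specialized at $\lambda$. Now the defining conditions of the generic rank read precisely as statements about this polynomial matrix: $I_{g_i+1}(U^i_\Lambda)(p)=0$ says every $(g_i+1)\times(g_i+1)$ minor of $U^i_\Lambda(p)$ is the zero polynomial, so $\rank(u_\lambda^i)\le g_i$ for all $\lambda$; and $I_{g_i}(U^i_\Lambda)(p)\ne 0$ says some $g_i\times g_i$ minor is a nonzero polynomial, so $\rank(u_\lambda^i)=g_i$ for exactly those $\lambda$ lying outside the common zero set $Z_i\subseteq k^c$ of the ideal $I_{g_i}(U^i_\Lambda)(p)$, which is a proper closed subset. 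Since $k^c$ is irreducible this complement is nonempty, so the maximal rank is indeed $g_i$ and it is attained; therefore $\mathcal U^i_{M_p}=(k^c\setminus\{0\})\setminus Z_i$. Consequently $\mathcal U^i_{M_p}=k^c\setminus\{0\}$ if and only if $Z_i\subseteq\{0\}$. Because $i\ge 1$ and $U_\Lambda(0)=0$, all entries of $U^i_\Lambda(p)$ vanish at the origin, so $0\in Z_i$ always; thus the condition is $Z_i=\{0\}$, which by the Nullstellensatz — exactly as in the proof of Proposition~\ref{cr} — is equivalent to $\sqrt{I_{g_i}(U^i_\Lambda)(p)}=(\Lambda_1,\dots,\Lambda_c)$. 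This proves the displayed equivalence when $g_i\ge 1$; if $g_i=0$, then $U^i_\Lambda(p)=0$, the operators $u_\lambda^i$ vanish identically (hence have constant rank $0$), and with the convention $I_0=(1)$ adopted for Proposition~\ref{cr} neither side of the displayed equivalence imposes a constraint, so the case is vacuous.

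Assembling the displayed equivalence over all $1\le i\le n-1$ with Lemma~\ref{lem:characterizeopen}(2) gives the theorem. The only non-formal ingredient is the standard fact about a matrix over a polynomial ring used above — that the rank of any specialization is at most the generic rank and equals it off the proper closed vanishing locus of the maximal-size nonvanishing minors — together with the bookkeeping needed to identify that locus with $Z_i$ through the minor ideals $I_{g_i}(U^i_\Lambda)(p)$ and to track that the origin always lies in it; I expect this, and the degenerate case $g_i=0$, to be the only points requiring care, everything else following from Lemma~\ref{lem:characterizeopen} and the construction of $M_p$.
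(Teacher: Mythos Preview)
Your argument is correct and follows the same route as the paper's: reduce via Lemma~\ref{lem:characterizeopen}(2) to the statement that each $u_\lambda^i$ has maximal rank for every nonzero $\lambda$, and translate this into the radical condition on the minor ideals exactly as in Proposition~\ref{cr}. Your treatment is in fact more thorough than the paper's one-line proof, which only spells out the backward direction explicitly. One small slip in your handling of the degenerate case: when $g_i=0$ the convention $I_0=(1)$ gives $\sqrt{I_0}=(1)\ne(\Lambda_1,\dots,\Lambda_c)$, so the right-hand side of your displayed equivalence is \emph{false} rather than vacuous while the left-hand side is true; this is really an artifact of the theorem statement using equality where Proposition~\ref{cr} used $\supseteq$, and is not addressed in the paper's proof either.
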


\begin{proof} The condition shows that $u^i_\lambda:M_p\to M_p$ is of maximal rank for all nonzero $\lambda$, and all $1\le i\le n-1$. Thus by Lemma \ref{lem:characterizeopen}, $M_p$ has constant Jordan type.
\end{proof}

For fixed $i$, the condition that $p\in V$ satisfies $\sqrt{I_{g_i}(U^i_\Lambda)(p)}=(\Lambda_1,\dots,\Lambda_c)$
corresponds to a Zariski open set of $V$.  Thus the condition of Theorem \ref{thm:generic} corresponds to a finite intersection of Zariski open sets, and thus is open. The question remains when are these open sets nonempty.  The answer is simple: they are nonempty if and only if ${d\choose g_i}^2\ge c$. The upshot of Theorem \ref{thm:generic} is therefore that the $A^c_q$-modules $M_p$ generically have constant Jordan type when $d^2\ge c$.

\begin{remark} The conditions $\sqrt{I_{g_i}(U^i_\Lambda)(p)}=(\Lambda_1,\dots,\Lambda_c)$ seem to depend on the point $p\in V$.  However, if $p'$ is another point of $V$ such that the $A^c_q$-modules $M_p$ and $M_{p'}$ are isomorphic, then there exists a $d\times d$ invertible matrix $E$ such that 
$U_\Lambda(p')=EU^i_\Lambda(p)E^{-1}$, and thus  $I_{g_i}(U^i_\Lambda)(p')=I_{g_i}(U^i_\Lambda)(p)$.
\end{remark}

\section{Auslander-Reiten theory}\label{sec:ARtheory}

In this section, we use Corollary \ref{cor:syzygies} to show that the property of having constant Jordan type is preserved under Auslander-Reiten translates. We then show that if one of the modules in a component of the stable Auslander-Reiten quiver of $A^c_q$ has constant Jordan type, then so do all the other modules in that component.

Recall first that if $A$ is any algebra and $M$ an $A$-module, then from an algebra automorphism $\psi \colon A \to A$ we obtain a new $A$-module ${_{\psi}M}$, called the \emph{twist} of $M$ by $\psi$. The module structure is given by $a \cdot m = \psi (a)m$. The twist commutes with operations such as direct sum and syzygies. We shall be concerned with the homogeneous automorphisms of $A^c_q$, that is, automorphisms which map each generator $x_j$ to a linear combination $\alpha_{1j}x_1 + \cdots + \alpha_{cj}x_c$. Not all automorphisms are of this form. For example, by mapping $x_1$ to $x_1 + x_1^{n-1}x_2^{n-1} \cdots x_c^{n-1}$ and $x_j$ to itself for all $2 \le j \le c$, we have constructed a valid automorphism since all the relations in $A^c_q$ are preserved. If we twist a module having constant Jordan type with such an automorphism, the result may be a module which does not have constant type. However, as the following lemma shows, constant Jordan type is preserved when we twist with homogeneous automorphisms.

\begin{lemma}\label{lem:homogeneoustwist}
If $M$ is an $A^c_q$-module of constant Jordan type, and $\psi \colon A^c_q \to A^c_q$ is a homogeneous automorphism, then the module ${_{\psi}M}$ also has constant Jordan type. Moreover, the Jordan types of $M$ and ${_{\psi}M}$ are the same. 
\end{lemma}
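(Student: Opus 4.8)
The plan is to compare, for each nonzero $\lambda \in k^c$, the restriction of $M$ to $k[u_\lambda]$ with the restriction of ${_\psi M}$ to $k[u_\lambda]$, and show they are isomorphic. The key observation is that a homogeneous automorphism $\psi$ acts on the span of $x_1, \dots, x_c$ inside $A^c_q$ by an invertible linear transformation; concretely, if $\psi(x_j) = \sum_i \alpha_{ij} x_i$, then the matrix $T = (\alpha_{ij})$ is invertible. Writing $\lambda = (\lambda_1, \dots, \lambda_c)$ and $u_\lambda = \sum_j \lambda_j x_j$, I would compute
$$
\psi(u_\lambda) = \sum_j \lambda_j \psi(x_j) = \sum_j \lambda_j \sum_i \alpha_{ij} x_i = \sum_i \left( \sum_j \alpha_{ij}\lambda_j \right) x_i = u_{T\lambda},
$$
so that $\psi$ carries the element $u_\lambda$ to the element $u_\mu$ where $\mu = T\lambda$, and $\mu$ is again a nonzero $c$-tuple since $T$ is invertible.

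Next I would translate this into a statement about module restrictions. By definition of the twist, the action of $u_\lambda$ on ${_\psi M}$ is given by the action of $\psi(u_\lambda) = u_\mu$ on $M$. Hence the restriction of ${_\psi M}$ to the subalgebra $k[u_\lambda]$ is, as a $k[x]/(x^n)$-module (identifying $u_\lambda$ and $u_\mu$ with the variable $x$), exactly the restriction of $M$ to the subalgebra $k[u_\mu]$. Therefore the Jordan type of ${_\psi M}$ with respect to $\lambda$ equals the Jordan type of $M$ with respect to $\mu = T\lambda$. Since $M$ has constant Jordan type, this latter Jordan type does not depend on $\mu$, and in particular equals the common Jordan type of $M$. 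Thus ${_\psi M}$ has the same Jordan type with respect to every nonzero $\lambda$, so it has constant Jordan type, and moreover its Jordan type coincides with that of $M$, which is the second assertion.

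The only point requiring a little care — and the mild obstacle here — is making precise the identification of $k[u_\lambda]$ acting on ${_\psi M}$ with $k[u_\mu]$ acting on $M$: one must check that the subalgebra $k[u_\lambda]$ of $A^c_q$, when acting on the twisted module, is genuinely isomorphic (via $\psi$ restricted to it) to the subalgebra $k[u_\mu]$ acting on $M$, and that under this identification the $n$-dimensional indecomposables match up. This follows from the fact recorded at the start of Section~\ref{sec:jordantype} that $k[u_\lambda] \cong k[x]/(x^n) \cong k[u_\mu]$ and that $\psi$ restricts to an algebra isomorphism $k[u_\lambda] \to k[u_\mu]$ sending $u_\lambda \mapsto u_\mu$; composing the $k[u_\lambda]$-module structure on ${_\psi M}$ with $\psi^{-1}$ on that subalgebra recovers the $k[u_\mu]$-module structure on $M$, and since an isomorphism of the acting algebras sending generator to generator preserves the decomposition into indecomposables indexed by dimension, the Jordan types agree. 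No genuinely hard step is involved; the lemma is essentially a bookkeeping consequence of linearity of homogeneous automorphisms on the degree-one part together with the definition of constant Jordan type.
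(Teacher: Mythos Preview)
Your proof is correct and follows essentially the same approach as the paper's: both compute $\psi(u_\lambda) = u_{T\lambda}$ for the invertible matrix $T=(\alpha_{ij})$, observe that the action of $u_\lambda$ on ${_\psi M}$ is the action of $u_{T\lambda}$ on $M$, and conclude from constant Jordan type of $M$. The paper adds a one-line argument for why $T$ is invertible (otherwise $\psi$ would kill a nonzero linear form), which you assert without justification; you might add that sentence, but otherwise the arguments coincide.
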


\begin{proof}
For each $1 \le j \le c$ there are scalars $\alpha_{1j}, \dots, \alpha_{cj}$ with
$$x_j \mapsto \alpha_{1j}x_1 + \cdots + \alpha_{cj}x_c$$
The $c \times c$ matrix $E = \left ( \alpha_{ij} \right )$ must have rank $c$; otherwise, there would exist a nonzero $c$-tuple $\beta = ( \beta_1, \dots, \beta_c ) \in k^c$ with $E \beta^T =0$. But then the automorphism $\psi$ would map the nonzero element $\beta_1x_1 + \cdots + \beta_cx_c$ to zero, which is impossible.

Now take a nonzero $c$-tuple $\lambda \in k^c$. Since $\psi ( u_{\lambda} ) = u_{(E \lambda^T)^T}$, the matrix for $u_{\lambda}$ on ${_{\psi}M}$ is the same as the matrix for $u_{(E \lambda^T)^T}$ on $M$. As $E$ has maximal rank, the $c$-tuple $(E \lambda^T)^T$ is nonzero, and so since $M$ has constant Jordan type the matrix for $u_{\lambda}$ on ${_{\psi}M}$ is independent of $\lambda$. Moreover, it is the same as the matrix for $u_{\lambda}$ on $M$.
\end{proof}

\begin{remark}\label{rem:automorphisms}
In most cases, namely when $q$ is not $\pm 1$ (thus $n \ge 3$), the homogeneous automorphisms on $A^c_q$ are actually of a very simple form; they just map each generator $x_j$ to a multiple of itself. To see this, take such an automorphism $\psi \colon A^c_q \to A^c_q$. Then for each $1 \le j \le c$ there are scalars $\alpha_{1j}, \dots, \alpha_{cj}$ with
$$x_j \mapsto \alpha_{1j}x_1 + \cdots + \alpha_{cj}x_c$$
Suppose that $\alpha_{ij}$ is nonzero, and consider another generator $x_s$ for $s \neq j$. If $j < s$, then the relation $x_jx_s - qx_sx_j = 0$ implies that
$$\left (  \alpha_{1j}x_1 + \cdots + \alpha_{cj}x_c \right ) \left (  \alpha_{1s}x_1 + \cdots + \alpha_{cs}x_c \right ) -q \left (  \alpha_{1s}x_1 + \cdots + \alpha_{cs}x_c \right ) \left ( \alpha_{1j}x_1 + \cdots + \alpha_{cj}x_c \right )$$
must be zero. The term involving $x_i^2$ is $(1-q) \alpha_{ij} \alpha_{is} x_i^2$, and so since $(1-q) \alpha_{ij} x_i^2$ is nonzero in $A^c_q$, we see that $\alpha_{is}$ must be zero. The same happens if $s < j$. This shows that if $x_i$ occurs in the linear combination of $\psi (x_j)$, then $x_i$ does \emph{not} occur in the linear combinations of $\psi (x_s)$ for $s \neq j$. Consequently, since $\psi$ is an automorphism, it must simply permute the generators $x_1, \dots, x_c$ up to scalars; there are nonzero scalars $\alpha_1, \dots, \alpha_c$ and a permutation $\sigma \in S_c$ with $\psi ( x_i ) = \alpha_i x_{\sigma (i)}$ for every $1 \le i \le c$.

So far, we have only used that $q \neq 1$. Let us now use the fact that $q \neq \pm 1$ to show that the permutation $\sigma$ must be the identity permutation. If not, there exist two integers $i,j \in \{ 1, \dots, c \}$ with $i < j$ and $\sigma (i) > \sigma (j)$. Then since $x_ix_j = qx_jx_i$ and $x_{\sigma (j)} x_{\sigma (i)} = q x_{\sigma (i)} x_{\sigma (j)}$, we obtain
$$\alpha_i \alpha_j x_{\sigma (i)} x_{\sigma (j)} = \psi \left ( x_ix_j \right ) = \psi \left ( qx_jx_i \right ) = q \alpha_i \alpha_j x_{\sigma (j)} x_{\sigma (i)} = q^2 \alpha_i \alpha_j x_{\sigma (i)} x_{\sigma (j)}$$
but this is impossible when $q \neq \pm 1$. 

To sum up, when $q \neq \pm 1$, then a homogeneous automorphism $\psi \colon A^c_q \to A^c_q$ simply maps $x_i$ to $\alpha_i x_i$ for some nonzero $\alpha_i \in k$. When $q^2 = 1$, however, then there are in general other kinds of homogeneous automorphisms. For example, over any ground field $k$, the quantum complete intersection (and exterior algebra)
$$k \langle x,y \rangle / (x^2, xy+yx, y^2)$$
admits the homogeneous morphisms $(x \mapsto y, y \mapsto x)$ and $(x \mapsto x+y, y \mapsto x-y)$. 
\end{remark}

We can now show that the property of having constant Jordan type is preserved under Auslander-Reiten translates. Recall that for a finite dimensional Frobenius algebra $A$, there is an automorphism $\nu \colon A \to A$, called the \emph{Nakayama automorphism}, with the property that the bimodules $D(A)$ and ${_{\nu}A_1}$ are isomorphic. Here $D(A)$ denotes the vector space dual $\Hom_k(A,k)$ of $A$, and the action on the bimodule ${_{\nu}A_1}$ is defined by $a_1 \cdot a \cdot a_2 = \nu (a_1) a a_2$. The Nakayama automorphism is unique up to inner automorphisms. It is well known that for such an algebra $A$, the Auslander-Reiten translate $\tau M$ of a module $M$ is isomorphic to $\Omega_A^2 \left ( {_{\nu}M} \right )$; see, for example, \cite[Proposition 3.13 and Theorem 8.5]{SkowronskiYamagata}. We can now apply this to our quantum complete intersection $A^c_q$, which is Frobenius.

\begin{theorem}\label{thm:ARtranslate}
An $A^c_q$-module $M$ has constant Jordan type if and only if its Auslander-Reiten translate $\tau M$ does. Moreover, if so, then their stable constant Jordan types are the same.
\end{theorem}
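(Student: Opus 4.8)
The plan is to combine the Nakayama automorphism description of the Auslander–Reiten translate with the two preservation results already established in the paper, namely Lemma~\ref{lem:homogeneoustwist} (constant Jordan type is preserved under homogeneous twists, with the Jordan type unchanged) and Corollary~\ref{cor:syzygies} (constant Jordan type is preserved under syzygies and cosyzygies, with the stable Jordan type either unchanged or reversed according to the parity). Since $A^c_q$ is Frobenius, we have $\tau M \simeq \Omega_{A^c_q}^2\left({_{\nu}M}\right)$ for the Nakayama automorphism $\nu$, so the entire proof reduces to checking that $\nu$ can be chosen to be a homogeneous automorphism.

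First I would identify the Nakayama automorphism of $A^c_q$ explicitly. The algebra has a $k$-basis of monomials $x_1^{e_1}\cdots x_c^{e_c}$ with $0 \le e_i \le n-1$, and the socle is one-dimensional, spanned by the top monomial $z = x_1^{n-1}x_2^{n-1}\cdots x_c^{n-1}$. A Frobenius form $\varphi \colon A^c_q \to k$ is given by reading off the coefficient of $z$. One then computes, for each generator $x_j$, the scalar $\mu_j$ characterized by $\varphi(x_j a) = \varphi(a \nu(x_j))$ for all $a$; using the commutation relations $x_i x_j = q x_j x_i$ for $i<j$ one finds that moving $x_j$ past the complementary monomial $x_1^{n-1}\cdots \widehat{x_j^{?}} \cdots$ to reform $z$ produces a power of $q$ depending only on $j$ and $c$. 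Concretely, $\nu(x_j) = q^{(c - 2j + 1)(n-1)} x_j$ up to the usual normalization, so $\nu$ sends each $x_j$ to a nonzero scalar multiple of itself. In particular $\nu$ is a homogeneous automorphism (indeed of the simplest form in Remark~\ref{rem:automorphisms}).

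With $\nu$ homogeneous, the argument concludes quickly. If $M$ has constant Jordan type, then ${_{\nu}M}$ has constant Jordan type by Lemma~\ref{lem:homogeneoustwist}, with the same Jordan type as $M$; then $\tau M \simeq \Omega_{A^c_q}^2\left({_{\nu}M}\right)$ has constant Jordan type by Corollary~\ref{cor:syzygies}, and since we apply $\Omega^2$ (even degree) the stable Jordan type is preserved. Conversely, if $\tau M$ has constant Jordan type, then $\Omega_{A^c_q}^{-2}(\tau M)$ does by Corollary~\ref{cor:syzygies}, and since $\Omega_{A^c_q}^{-2}(\tau M) \simeq {_{\nu}M}$ in the stable category, applying Lemma~\ref{lem:homogeneoustwist} to the automorphism $\nu^{-1}$ (also homogeneous) recovers that $M$ has constant Jordan type with the same stable type. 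Both directions give that the stable constant Jordan types of $M$ and $\tau M$ agree.

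The main obstacle is the explicit determination of the Nakayama automorphism: one must carefully track the powers of $q$ arising from commuting a generator past the top socle monomial, and verify that the resulting assignment $x_j \mapsto \mu_j x_j$ genuinely satisfies the defining property $D(A^c_q) \simeq {_{\nu}}(A^c_q)_1$ as bimodules. This is a bounded but slightly delicate computation with the $q$-commutation relations; once it is done, and in particular once one observes that $\nu$ fixes each generator up to a scalar and hence is homogeneous, the rest is an immediate application of the two cited results. (One should also note that the Nakayama automorphism is only well-defined up to inner automorphisms, but inner automorphisms are themselves homogeneous here — conjugation by a unit $1 + (\text{higher terms})$ still, after composing with our homogeneous representative, yields a homogeneous automorphism, or more simply: the bimodule isomorphism type of $\tau M$ does not depend on the choice, so we are free to use the homogeneous representative.)
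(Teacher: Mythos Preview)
Your proposal is correct and follows essentially the same approach as the paper: identify $\tau M \simeq \Omega^2_{A^c_q}({_\nu M})$, observe that the Nakayama automorphism sends each $x_i$ to a scalar multiple of itself (hence is homogeneous), then apply Lemma~\ref{lem:homogeneoustwist} and Corollary~\ref{cor:syzygies}. The only difference is that the paper cites \cite[Lemma~3.1]{Bergh} for the form of $\nu$ rather than computing it directly, and your parenthetical about inner automorphisms being homogeneous is not quite right (conjugation by $1+\text{higher terms}$ is typically \emph{not} homogeneous), though your fallback observation that one may simply choose the homogeneous representative is the correct fix.
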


\begin{proof}
By \cite[Lemma 3.1]{Bergh}, the Nakayama automorphism $\nu$ of $A^c_q$ maps each generator $x_i$ to $q^{m_i}x_i$ for some (possibly negative) integer $m_i \in \mathbb{Z}$. Therefore, by Lemma \ref{lem:homogeneoustwist}, the module $M$ has constant Jordan type if and only if ${_{\nu}M}$ does, and with the same Jordan type. It now follows from Corollary \ref{cor:syzygies} that $M$ has constant Jordan type if and only if $\Omega_{A^c_q}^2({_{\nu}M})$ does, and with the same stable Jordan type. Since $\tau M \simeq \Omega_{A^c_q}^2({_{\nu}M})$, the result follows.
\end{proof}

Next, we turn to the stable Auslander-Reiten quiver of $A^c_q$. Our aim is to show that when the ground field $k$ is algebraically closed, then constant Jordan type is a property of the components of the quiver: if one of the modules has constant Jordan type, then so do all the others in that component. We also determine the stable Jordan types of the modules. The key to all this is the fact, proved in \cite{BerghErdmann2}, that when either $n \ge 3$ or $c \ge 3$, then every component of the stable Auslander-Reiten quiver of $A^c_q$ is of the form $\mathbb{Z}A_{\infty}$.

In order to prove this result, we need the following lemma and its corollary. They show that every Auslander-Reiten sequence over $A^c_q$ ending in a module of constant Jordan type splits when we restrict to the subalgebras $k[u_{\lambda}]$.

\begin{lemma}\label{lem:splitsequence}
Let $M$ be an indecomposable $A^c_q$-module of complexity at least $2$, and
$$0 \to \tau M \xrightarrow{f} E \xrightarrow{g} M \to 0$$
the Auslander-Reiten sequence ending in $M$. Then this sequence splits over $k[u_{\lambda}]$ for all nonzero $\lambda \in k^c$.
\end{lemma}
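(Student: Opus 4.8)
The plan is to reduce the statement to a standard fact: an Auslander--Reiten sequence splits upon restriction to a subalgebra $B$ precisely when every $B$-module map $M \to M$ that ``lifts'' nontrivially factors through $E$, and more concretely, the sequence splits over $k[u_\lambda]$ if and only if the connecting map $M \to \Omega^{-1}_{A^c_q}(\tau M)$ becomes null-homotopic after restriction to $k[u_\lambda]$. Since $A^c_q$ is selfinjective, the Auslander--Reiten sequence ending in $M$ corresponds to a nonzero element of $\operatorname{\underline{Hom}}_{A^c_q}(M, \tau M)$, or equivalently, under the standard duality for selfinjective algebras, to the ``almost split'' property. The key point will be to show that the class of this extension dies in $\operatorname{Ext}^1_{k[u_\lambda]}(M, \tau M)$ for every nonzero $\lambda$.

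First I would recall that for a selfinjective algebra, the Auslander--Reiten sequence ending in an indecomposable non-projective $M$ represents a socle element of the $\operatorname{End}_{A^c_q}(M)$-module $\operatorname{Ext}^1_{A^c_q}(M, \tau M)$, and in fact it is characterized by the property that a homomorphism $h \colon L \to M$ factors through $g$ if and only if $h$ is not a split epimorphism. The restriction of the sequence to $k[u_\lambda]$ splits if and only if its class in $\operatorname{Ext}^1_{k[u_\lambda]}(M, \tau M)$ is zero, which (since $k[u_\lambda] \cong k[x]/(x^n)$ is selfinjective) happens if and only if the corresponding map in $\operatorname{\underline{Hom}}_{k[u_\lambda]}(M, \Omega^{-1}_{k[u_\lambda]} \tau M)$ vanishes, i.e. if and only if the composite $M \xrightarrow{\text{conn.}} \Omega^{-1}_{A^c_q}(\tau M) \to \Omega^{-1}_{k[u_\lambda]}(\tau M) \oplus (\text{free})$ factors through a projective $k[u_\lambda]$-module.

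The main mechanism I expect to use is that $A^c_q$ is \emph{free} as a module over the subalgebra $k[u_\lambda]$ (stated in the excerpt), so restriction is exact and sends projectives to projectives, and more importantly the functor $A^c_q \otimes_{k[u_\lambda]} - $ is both left and right adjoint to restriction; hence restriction of an injective presentation stays an injective presentation up to free summands. The crucial input, though, should be that $M$ has complexity at least $2$, which by Theorem \ref{thm:complexitysyzygies}(1) and the rank variety machinery of \cite{BensonErdmannHolloway} forces $M$ to be non-free over $k[u_\lambda]$ for a \emph{generic} $\lambda$ --- but for the splitting we need control at \emph{every} nonzero $\lambda$. I would argue that if the sequence did not split over some $k[u_\lambda]$, then the restriction $M|_{k[u_\lambda]}$ would contain an indecomposable summand $M_t$ with $t < n$ whose ``almost split'' behaviour over $k[u_\lambda]$ is incompatible with $M$ being a proper summand of the restricted middle term; more precisely, the restricted sequence would have to be, on each indecomposable summand, a sum of the Auslander--Reiten sequence over $k[u_\lambda]$ and a split sequence, and a dimension/rank count using $\operatorname{cx} M \ge 2$ (so $\tau M \not\cong M$ stably has the ``right'' syzygy shape from Corollary \ref{cor:syzygies}) rules out the non-split possibility.

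The hard part will be making the last step rigorous without assuming $M$ has constant Jordan type (the lemma is stated for general indecomposable $M$ of complexity $\ge 2$, and is meant to be applied \emph{before} one knows constant Jordan type propagates through the component). I suspect the cleanest route is the following: the Auslander--Reiten sequence ending in $M$ is, up to the Nakayama twist, $\Omega^2$-periodic in the sense that it is obtained from the projective cover sequence of $M$ by applying $\Omega$, so its class in $\operatorname{Ext}^1_{A^c_q}(M,\tau M) \cong \operatorname{\underline{Hom}}_{A^c_q}(\Omega M, \tau M)$ is a \emph{non-radical} generator; restricting to $k[u_\lambda]$, this maps into $\operatorname{\underline{Hom}}_{k[u_\lambda]}(\Omega_{k[u_\lambda]} M, \tau M |_{k[u_\lambda]})$, and I would show this image is zero by exhibiting that the AR sequence over $A^c_q$, when restricted, lies in the image of the (split) pushout/pullback construction --- equivalently, that $g \colon E \to M$ restricted to $k[u_\lambda]$ is a split epimorphism. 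For that, it suffices to find one $k[u_\lambda]$-linear section, and here I would use that $g$ is a minimal right almost split map over $A^c_q$, so $E|_{k[u_\lambda]}$ surjects onto $M|_{k[u_\lambda]}$ with kernel $\tau M|_{k[u_\lambda]}$; the obstruction to splitting lives in $\operatorname{Ext}^1_{k[u_\lambda]}$, and I would bound this using that over $k[x]/(x^n)$ every extension of $M_a$ by $M_b$ with $a + b \le n$ is split (the only non-split self-extensions come from $M_n$-sized obstructions), combined with the syzygy formula of Theorem \ref{thm:complexitysyzygies}(2) relating the Jordan type of $\tau M$ to that of $M$ --- the complexity $\ge 2$ hypothesis being exactly what guarantees that no $M_n$ summand obstruction survives. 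This compatibility check is where I expect the real work to lie.
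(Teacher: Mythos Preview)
Your proposal has a genuine gap, and the intended endgame rests on a false claim. You write that ``over $k[x]/(x^n)$ every extension of $M_a$ by $M_b$ with $a+b \le n$ is split.'' This is not true: for any $1 \le a,b$ with $a+b \le n$ the sequence $0 \to M_b \to M_{a+b} \to M_a \to 0$ is a non-split extension, and in fact $\Ext^1_{k[x]/(x^n)}(M_a,M_b)$ is one-dimensional for all $1 \le a,b \le n-1$. So your proposed obstruction bound cannot work, and the ``compatibility check'' you flag as the hard part does not go through. A related issue is that you invoke the syzygy formula of Theorem~\ref{thm:complexitysyzygies}(2) to compare the Jordan types of $M$ and $\tau M$, but in this lemma $M$ is \emph{not} assumed to have constant Jordan type, so that formula is unavailable at this stage.

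The paper's argument avoids all of this Ext bookkeeping by using the adjunction you already noted, but in a much more direct way. The counit (multiplication) map
\[
\mu \colon A^c_q \otimes_{k[u_\lambda]} M \longrightarrow M, \qquad a \otimes m \mapsto am,
\]
is a surjective $A^c_q$-homomorphism. Since $A^c_q$ is free over $k[u_\lambda]$, the induced module $A^c_q \otimes_{k[u_\lambda]} M$ has complexity at most $1$ over $A^c_q$; as $\cx M \ge 2$, the map $\mu$ cannot be a split epimorphism. By the almost-split property of $g$, it follows that $\mu$ factors through $g$, say $\mu = g \circ \theta$. Under the natural adjunction isomorphism
\[
\Hom_{A^c_q}\bigl(A^c_q \otimes_{k[u_\lambda]} M,\,L\bigr) \;\cong\; \Hom_{k[u_\lambda]}(M,L),
\]
the map $\mu$ corresponds to $1_M$, so the factorization $\mu = g \circ \theta$ yields a $k[u_\lambda]$-linear section of $g$. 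This is the missing idea: rather than analysing the extension class summand by summand over $k[u_\lambda]$, one produces a single $A^c_q$-map into $E$ whose adjoint is the identity, and the complexity hypothesis is used only once, to guarantee that $\mu$ is not split.
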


\begin{proof}
Fix a nonzero $\lambda \in k^c$, and denote the algebra $A^c_q$ by just $A$. For every $A$-module $L$, the adjoint isomorphism
$$\Hom_A \left ( A \otimes_{k[u_{\lambda}]} M, L \right ) \to \Hom_{k[u_{\lambda}]} \left ( M, \Hom_A(A,L) \right )$$
together with the natural isomorphism $\Hom_A(A,L) \to L$ of $k[u_{\lambda}]$-modules give an isomorphism
$$\Hom_A \left ( A \otimes_{k[u_{\lambda}]} M, L \right ) \xrightarrow{\varphi_L} \Hom_{k[u_{\lambda}]} \left ( M,L \right )$$
which is natural in $L$. From the map $g$ in the Auslander-Reiten sequence we therefore obtain a commutative diagram
$$\xymatrix{
\Hom_A \left ( A \otimes_{k[u_{\lambda}]} M, E \right ) \ar[r]^{g_*} \ar[d]^{\varphi_E} & \Hom_A \left ( A \otimes_{k[u_{\lambda}]} M, M \right ) \ar[d]^{\varphi_M} \\
\Hom_{k[u_{\lambda}]} \left ( M,E \right ) \ar[r]^{g_*} & \Hom_{k[u_{\lambda}]} \left ( M,M \right )
}$$

Consider now the multiplication map $\mu \colon A \otimes_{k[u_{\lambda}]} M \to M$ given by $a \otimes m \mapsto am$. This is a surjective homomorphism of $A$-modules, and we claim that it cannot split. For if it did, then the $A$-module $M$ would be a direct summand of the $A$-module $A \otimes_{k[u_{\lambda}]} M$. However, the complexity of $M$ as a module over $k[u_{\lambda}]$ is at most one, since all the indecomposable non-projective $k[u_{\lambda}]$-modules are periodic. Moreover, if we take any projective resolution of $M$ over $k[u_{\lambda}]$, and apply $A \otimes_{k[u_{\lambda}]} -$ to it, then the result is a projective resolution of $A \otimes_{k[u_{\lambda}]} M$ over $A$, since $A$ is free as a $k[u_{\lambda}]$-module. Therefore the complexity of the $A$-module $A \otimes_{k[u_{\lambda}]} M$ is at most one. Then $M$ cannot be a direct summand of this module, since the complexity of $M$ is at least $2$.

Since the multiplication map $\mu$ does not split, it factors through the map $g$, so that $\mu = g_* ( \theta )$ for some map $\theta \in \Hom_A \left ( A \otimes_{k[u_{\lambda}]} M, E \right )$. The image of $\mu$ under $\varphi_M$ is the identity on $M$, and so the commutativity of the diagram implies that the map $g$ splits as a homomorphism of $k[u_{\lambda}]$-modules: $1_M = g \circ \varphi_E( \theta )$. This shows that the Auslander-Reiten sequence splits over $k[u_{\lambda}]$.
\end{proof}

It now follows from Theorem \ref{thm:complexitysyzygies} that all the Auslander-Reiten sequences ending in modules of constant Jordan type must split over the subalgebras $k[u_{\lambda}]$.

\begin{corollary}\label{cor:ARsequence}
Every Auslander-Reiten sequence over $A^c_q$ ending in a module of constant Jordan type splits over $k[u_{\lambda}]$ for all nonzero $\lambda \in k^c$.
\end{corollary}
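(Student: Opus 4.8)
The plan is to derive this as a direct consequence of the two preceding results, Theorem~\ref{thm:complexitysyzygies} and Lemma~\ref{lem:splitsequence}. The only thing that really needs to be said is why a module occurring at the right-hand end of an Auslander-Reiten sequence and having constant Jordan type must have complexity at least two, since that is precisely the hypothesis under which Lemma~\ref{lem:splitsequence} applies.

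First I would recall that, by definition, an Auslander-Reiten sequence ends in an indecomposable non-projective module; call it $M$, so that the sequence in question is $0 \to \tau M \to E \to M \to 0$. Since $A^c_q$ is a local algebra (its radical is nilpotent with residue field $k$), its projective modules are exactly the free ones, and hence $M$ is non-free. Now I invoke Theorem~\ref{thm:complexitysyzygies}(1): an $A^c_q$-module of constant Jordan type is either free or has complexity exactly $c$. As $M$ has constant Jordan type and is non-free, we conclude $\cx M = c \ge 2$. At this point Lemma~\ref{lem:splitsequence} applies to $M$ verbatim, giving that the Auslander-Reiten sequence ending in $M$ splits over $k[u_{\lambda}]$ for every nonzero $\lambda \in k^c$, which is exactly the claim.

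I do not expect any genuine obstacle here; the substantive work has already been carried out in Theorem~\ref{thm:complexitysyzygies} and Lemma~\ref{lem:splitsequence}. The only point deserving a sentence of care is the chain ``indecomposable non-projective $\Rightarrow$ non-free $\Rightarrow$ complexity $c$'', which uses that $A^c_q$ is local and that $c \ge 2$, so that the resulting complexity indeed lies in the range covered by Lemma~\ref{lem:splitsequence}.
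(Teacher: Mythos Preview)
Your proposal is correct and matches the paper's own reasoning exactly: the paper simply remarks that the corollary follows from Theorem~\ref{thm:complexitysyzygies}, the implicit argument being precisely the chain you spell out (an Auslander--Reiten sequence ends in an indecomposable non-projective, hence non-free, module, which by Theorem~\ref{thm:complexitysyzygies}(1) has complexity $c\ge 2$, so Lemma~\ref{lem:splitsequence} applies). There is nothing to add.
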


Now we can prove the main result in this section: when the field $k$ is algebraically closed, then the $A^c_q$-modules of constant Jordan type form complete components of the stable Auslander-Reiten quiver. As mentioned, the key ingredient is that in all cases except one, all the components are of the form $\mathbb{Z}A_{\infty}$. In such a component, a module is \emph{quasi-simple} if it belongs to the $\tau$-orbit at the end. For an arbitrary module in the component, there is a shortest sectional path to a quasi-simple module, and the number of modules in such a path is the \emph{quasi-length} of the module. Thus the quasi-simple modules are precisely the ones having quasi-length one.

\begin{theorem}\label{thm:ARcomponents}
Suppose that the ground field $k$ is algebraically closed, and let $\Theta$ be a component of the stable Auslander-Reiten quiver of $A^c_q$ containing a module of constant Jordan type. Then all the modules in $\Theta$ have constant Jordan type. In fact, the following hold:

(1) If $n = c = 2$, then $\Theta$ is of the form $\mathbb{Z} \tilde{A}_{12}$, and its modules are precisely $\{ \Omega_{A^c_q}^i(k) \mid i \in \mathbb{Z} \}$. For even $i$, the stable Jordan type of $\Omega_{A^c_q}^i(k)$ is $[1]$, and for odd $i$ it is $[n-1]$.

(2) If either $n \ge 3$ or $c \ge 3$, then $\Theta$ is of the form $\mathbb{Z} A_{\infty}$. If the stable Jordan type of one of the quasi-simple modules in $\Theta$ is $[1]^{d_1} \cdots [n-1]^{d_{n-1}}$, then every module of quasi-length $l$ has stable Jordan type $[1]^{ld_1} \cdots [n-1]^{ld_{n-1}}$.
\end{theorem}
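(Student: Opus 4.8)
The plan is to run an induction on quasi-length, using the Auslander--Reiten sequences in the component $\Theta$ together with Corollary~\ref{cor:ARsequence} to produce split short exact sequences after restriction to each subalgebra $k[u_{\lambda}]$. First I would dispose of case (1): when $n=c=2$ the algebra is the exterior algebra $k\langle x,y\rangle/(x^2,xy+yx,y^2)$ (this is the one $q=-1$ situation), every stable component is known to be of the form $\mathbb{Z}\tilde A_{12}$ by \cite{BerghErdmann2}, and such a component, once it contains a module of constant Jordan type, must be the $\Omega$-orbit of $k$ because $k$ lies in it (the syzygies $\Omega^i(k)$ are the only modules in that tube-like component up to the shift). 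The stable Jordan type of $k$ is $[1]=[1]^{d_1}$ with $n-1=1$, so Corollary~\ref{cor:syzygies} gives stable Jordan type $[1]$ for even $i$ and $[n-1]=[1]$ for odd $i$; here $n-1=1$ so the two coincide, which is exactly the stated conclusion.

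For case (2), assume $n\ge 3$ or $c\ge 3$, so every component is $\mathbb{Z}A_{\infty}$. The base case is quasi-length one: by Theorem~\ref{thm:ARtranslate}, constant Jordan type and the stable Jordan type are preserved along the $\tau$-orbit, so the quasi-simple modules all have the same stable Jordan type, say $[1]^{d_1}\cdots[n-1]^{d_{n-1}}$; since the hypothesis is that \emph{some} module of $\Theta$ has constant Jordan type, I first need to push that property to a quasi-simple module — this follows by applying $\tau^{\pm}$ and $\Omega^{\pm}$ (equivalently, moving along the sectional paths) and invoking Theorem~\ref{thm:ARtranslate} and Corollary~\ref{cor:syzygies}, since every AR-sequence in the component restricts to a split sequence over $k[u_\lambda]$ by Corollary~\ref{cor:ARsequence}, once we know the relevant ends have constant Jordan type. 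The inductive step: if $M$ has quasi-length $l\ge 2$, it sits as the middle term (modulo the canonical structure of a $\mathbb{Z}A_\infty$ component) of data relating it to a module $M'$ of quasi-length $l-1$ and a quasi-simple module; more precisely the AR-sequence $0\to\tau M\to E\to M\to 0$ has $E$ a direct sum of modules of quasi-lengths $l-1$ and $l+1$ (or just $l-1$ at the boundary), and $\tau M$ has quasi-length $l$. I would instead induct using the "knitting" relation: the middle term of the AR-sequence ending at a module of quasi-length $l$ (for $l\ge 2$) decomposes as (quasi-length $l-1$) $\oplus$ (quasi-length $l+1$) only in the interior; at the boundary $l=1$ the middle term has quasi-length $2$. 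The cleanest bookkeeping is: by Corollary~\ref{cor:ARsequence} each AR-sequence in $\Theta$ splits over every $k[u_\lambda]$, so over $k[u_\lambda]$ we get $[E]=[\tau M]+[M]$ as virtual $k[u_\lambda]$-modules, i.e. stable Jordan types add; combined with $\tau M$ having the same stable Jordan type as $M$ (Theorem~\ref{thm:ARtranslate}, already constant once we know $M$ is), an easy induction on quasi-length shows every module of quasi-length $l$ has stable Jordan type $l$ times that of the quasi-simples, and in particular has \emph{constant} stable Jordan type, hence constant Jordan type.

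The one genuine subtlety — and the step I expect to be the main obstacle — is the bootstrapping: Corollary~\ref{cor:ARsequence} only tells us an AR-sequence splits over $k[u_\lambda]$ once we already know its end term has constant Jordan type, so I cannot immediately conclude that \emph{every} module in $\Theta$ has constant Jordan type just from knowing one does. The resolution is to propagate the property one mesh at a time: starting from the given module $L\in\Theta$ of constant Jordan type, its AR-sequence splits over each $k[u_\lambda]$, which forces the middle term and $\tau L$ to have constant Jordan type (the middle term is a summand-wise statement, using Corollary~\ref{cor:directsum} over the algebraically closed — hence infinite — field $k$); iterating along $\tau$ and along the sectional rays fills out the whole $\mathbb{Z}A_\infty$ component, and simultaneously the additivity of stable Jordan types across each (now split) mesh yields the multiplicative formula $[1]^{ld_1}\cdots[n-1]^{ld_{n-1}}$. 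One should check that the knitting indeed reaches all vertices — for $\mathbb{Z}A_\infty$ this is standard — and that passing to summands is legitimate, which is exactly where the algebraic closedness of $k$ is used via Corollary~\ref{cor:directsum}.
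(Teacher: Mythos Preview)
Your treatment of case (2) is essentially the paper's own argument: propagate constant Jordan type from the given module $L$ to its whole $\tau$-orbit via Theorem~\ref{thm:ARtranslate}, then use Corollary~\ref{cor:ARsequence} to split the AR-sequence over each $k[u_\lambda]$, deduce the middle term has constant Jordan type, pass to summands via Corollary~\ref{cor:directsum} (this is where algebraic closedness enters), and knit outward along the $\mathbb{Z}A_\infty$ mesh. The additive bookkeeping $[E]=[\tau M]+[M]$ over $k[u_\lambda]$ then gives the formula $[1]^{ld_1}\cdots[n-1]^{ld_{n-1}}$ by the same induction the paper runs. Your identification of the bootstrapping subtlety and its resolution is exactly right.

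Case (1), however, has a genuine gap. You assert that for $n=c=2$ ``every stable component is known to be of the form $\mathbb{Z}\tilde A_{12}$ by \cite{BerghErdmann2}''. This is false on two counts: \cite{BerghErdmann2} is the reference for the $\mathbb{Z}A_\infty$ shape when $n\ge 3$ or $c\ge 3$, not for $n=c=2$; and for the four-dimensional algebra there are many other stable components --- the periodic indecomposables sit in homogeneous tubes. So your sentence ``such a component, once it contains a module of constant Jordan type, must be the $\Omega$-orbit of $k$ because $k$ lies in it'' is circular: you have not explained why $\Theta$ cannot be one of the tubes. The missing ingredient is Theorem~\ref{thm:complexitysyzygies}(1): a nonfree module of constant Jordan type has complexity $c=2$, hence is not periodic, and for this algebra the only indecomposable nonprojective nonperiodic modules are the $\Omega^i(k)$ (the paper cites \cite[Lemma~II.7.3]{Erdmann}). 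That forces $\Theta$ to be the $\mathbb{Z}\tilde A_{12}$ component of the syzygies of $k$, after which Corollary~\ref{cor:syzygies} finishes the computation.
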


\begin{proof}
(1) When $n = c = 2$, then our algebra $A^c_q$ is just the commutative truncated polynomial algebra $k[x_1,x_2]/(x_1^2,x_2^2)$, regardless of the characteristic of the field $k$. It is well known (see, for example, the proof of \cite[Lemma II.7.3]{Erdmann}) that the only indecomposable nonprojective and nonperiodic modules over this algebra are the syzygies of the simple module $k$, and that they form a component $\mathbb{Z} \tilde{A}_{12}$ in the stable Auslander-Reiten quiver. By Corollary \ref{cor:syzygies} and the fact that $k$ trivially has constant Jordan type $[1]$, every module in this component has constant stable Jordan type as given. By Theorem \ref{thm:complexitysyzygies}, there are no other indecomposable nonprojective modules of constant Jordan type.

(2) Suppose now that either $n \ge 3$ or $c \ge 3$. By \cite[Theorem 3.6]{BerghErdmann2}, every component of the stable Auslander-Reiten quiver of $A^c_q$ is of the form $\mathbb{Z} A_{\infty}$. Let $M$ be a module in $\Theta$ of constant Jordan type, and suppose that its quasi-length is $l$. Then by Theorem \ref{thm:ARtranslate}, all the modules in $\Theta$ of quasi-length $l$, that is, all the modules in the $\tau$-orbit of $M$, also have constant Jordan type. Moreover, their stable Jordan types are all equal to that of $M$.

Consider the Auslander-Reiten sequence
$$0 \to \tau M \to E \to M \to 0$$
ending in $M$. If $M$ is quasi-simple, that is, if $l = 1$, then $E$ is indecomposable of quasi-length two. Otherwise, the module $E$ is a direct sum $E \simeq E_1 \oplus E_2$ of two indecomposable modules, with $E_1$ of quasi-length $l-1$ and $E_2$ of quasi-length $l+1$. By Corollary \ref{cor:ARsequence}, the sequence splits over $k[u_{\lambda}]$ for all nonzero $\lambda \in k^c$, hence since $M$ and $\tau M$ have constant Jordan type, so does $E$. It now follows from Corollary \ref{cor:directsum} and Theorem \ref{thm:ARtranslate} that every module in $\Theta$ of quasi-length $l+1$, or of quasi-length $l-1$ if $l \ge 2$, must have constant Jordan type. Moreover, the stable Jordan types of the modules of quasi-length $l+1$ are all the same, as are the stable Jordan types of the modules of quasi-length $l-1$. By induction, we see that every module in $\Theta$ must have constant Jordan type. Moreover, two modules having the same quasi-length have the same stable Jordan type. 

Suppose now that $[1]^{d_1} \cdots [n-1]^{d_{n-1}}$ is the stable Jordan type of one of (equivalently, all of) the quasi-simple modules. We show by induction on $l$ that every module $E$ of quasi-length $l$ has stable Jordan type $[1]^{ld_1} \cdots [n-1]^{ld_{n-1}}$, the case $l = 1$ being the quasi-simple case. If $l=2$, then there is an Auslander-Reiten sequence as above, in which $M$ and $\tau M$ are quasi-simple. Then since the sequence splits over $k[u_{\lambda}]$ for all nonzero $\lambda \in k^c$, and the stable Jordan type of both $M$ and $\tau M$ is $[1]^{d_1} \cdots [n-1]^{d_{n-1}}$, we see that the stable Jordan type of $E$ must be $[1]^{2d_1} \cdots [n-1]^{2d_{n-1}}$. If $l \ge 3$, then there is an Auslander-Reiten sequence
$$0 \to \tau M \to E \oplus E' \to M \to 0$$
in which the quasi-length of $E'$ is $l-2$, and that of $M$ and $\tau M$ is $l-1$. By induction, the stable Jordan type of $E'$ is $[1]^{(l-2)d_1} \cdots [n-1]^{(l-2)d_{n-1}}$, and the stable Jordan type of both $M$ and $\tau M$ is $[1]^{(l-1)d_1} \cdots [n-1]^{(l-1)d_{n-1}}$. Using again that the sequence splits over $k[u_{\lambda}]$ for all nonzero $\lambda \in k^c$, we see that the direct sum $E \oplus E'$ must have stable Jordan type $[1]^{2(l-1)d_1} \cdots [n-1]^{2(l-1)d_{n-1}}$, hence the stable Jordan type of $E$ must be $[1]^{ld_1} \cdots [n-1]^{ld_{n-1}}$. This completes the proof.
\end{proof}

\section{Syzygies of the simple module}\label{sec:syzygies}

In this final section, we focus on the syzygies of the simple $A^c_q$-module $k$. Since this module trivially has constant stable Jordan type $[1]$, it follows from Corollary \ref{cor:syzygies} that for every integer $i \in \mathbb{Z}$, the constant stable Jordan type of $\Omega_{A^c_q}^{2i}(k)$ is $[1]$, and for $\Omega_{A^c_q}^{2i+1}(k)$ it is $[n-1]$. Our aim now is to show that these are the \emph{only} modules having these constant stable Jordan types, in the case when $c = 2$, that is, when our algebra has two generators. The arguments we use are to a large extent adaptions of arguments from \cite{Carlson}.

Instead of writing $x_1$ and $x_2$, we shall write $x$ and $y$ for the generators of $A_q^2$, and just $A$ for the algebra $A_q^2$ itself. Thus the algebra we are dealing with throughout this section is
$$A = k \langle x,y \rangle / \left ( x^n, xy-qyx, y^n \right )$$
with $k$, $q$ and $n$ as before. Moreover, as in the previous sections we denote the radical of $A$ by just $\ra$. For an $A$-module $M$, we denote by $\Top M$ its top $M / \ra M$, and by $\Soc M$ its socle, that is, the submodule $\{ m \in M \mid \ra m = 0 \}$. Note that the latter is the same as the set of all elements $m$ in $M$ with $xm = 0$ and $ym = 0$.

For an $A$-module $M$ without any projective summands, consider its minimal complete resolution
$$\cdots \to P_2 \xrightarrow{d_2} P_1 \xrightarrow{d_1} P_0 \xrightarrow{d_0} P_{-1} \xrightarrow{d_{-1}} P_{-2} \to \cdots$$
of free $A$-modules with $\Im d_i \subseteq \ra P_{i-1}$ and $M = \Im d_0$. It is unique up to isomorphism, and so we define $\beta_i(M)$ to be the rank of the free module $P_i$. This is the same as the $k$-vector space dimension of $\Top \Omega_A^i(M)$, since $\Omega_A^i(M) = \Im d_i$. In Section \ref{sec:jordantype}, we defined the complexity of a module over an arbitrary algebra in terms of the dimensions of the projective modules in its minimal projective resolution. Since the algebra we are working over is local, we may just as well use the integers $\beta_i(M)$:
$$\cx M = \inf \{ m \ge 0 \mid \text{there exists } b \in \mathbb{R} \text{ with } \beta_t(M) \le bt^{m-1} \text{ for all } t \ge 0 \}$$

The first result we prove compares $\beta_0(M)$ with $\beta_{-1}(M)$ when the module $M$ has constant stable Jordan type $[1]$ or $[n-1]$. By definition, for all $u_{\lambda} = \lambda_1x + \lambda_2y$ with $\lambda \neq 0$, such a module decomposes over $k[u_{\lambda}]$ into a direct sum $M_i \oplus F$, where $F$ is free and $i$ is either $1$ or $n-1$ (with $M_1$ of dimension $1$, and $M_{n-1}$ of dimension $n-1$). We call the module $M_i$ the nonprojective component.

In the results and proofs to come, we write $k[x]$ and $k[y]$ for the subalgebras of $A$ generated by $x$ and $y$, respectively. These are not to be confused with polynomial rings: the are both isomorphic to $k[z]/(z^n)$.

\begin{proposition}\label{prop:ranks}
Suppose that $M$ is an indecomposable $A$-module of constant stable Jordan type $[1]$ or $[n-1]$. Then the following conditions are equivalent:

(1) $\beta_0(M) > \beta_{-1}(M)$;

(2) There exists a generator $a$ for the nonprojective component of $M$ as a $k[x]$-module, with the following properties: $a \notin \ra M$, and if $M$ is of stable Jordan type $[1]$ then $ya \neq 0$, whereas if $M$ is of type $[n-1]$ then $yx^{n-2}a \neq 0$;

(2') There exists a generator $b$ for the nonprojective component of $M$ as a $k[y]$-module, with the following properties: $b \notin \ra M$, and if $M$ is of stable Jordan type $[1]$ then $xb \neq 0$, whereas if $M$ is of type $[n-1]$ then $y^{n-2}xb \neq 0$;

(3) Any generator for the nonprojective component of $M$ as a $k[x]$-module satisfies (2);

(3') Any generator for the nonprojective component of $M$ as a $k[y]$-module satisfies (2').
\end{proposition}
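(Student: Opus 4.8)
The plan is to recast condition (1) as a comparison of top and socle dimensions and then read it off from the restricted structure. Since $A$ is a local selfinjective algebra with one-dimensional socle, the projective cover of any module $N$ is $A^{\dim\Top N}$ and its injective envelope --- which is exactly the module $P_{-1}$ occurring in the minimal complete resolution --- is $A^{\dim\Soc N}$; hence $\beta_0(M)=\dim\Top M$ and $\beta_{-1}(M)=\dim\Soc M$, so that (1) reads $\dim\Top M>\dim\Soc M$. I would then dispose of the primed statements at once: the swap $x\mapsto y,\ y\mapsto x$ is an algebra isomorphism $A_q^2\to A_{q^{-1}}^2$, and $q^{-1}$ is again a primitive $n'$-th root of unity, so $A_{q^{-1}}^2$ satisfies the same hypotheses; under this relabelling a module of constant stable Jordan type $[1]$ or $[n-1]$ goes to one of the same type, while $\beta_0$, $\beta_{-1}$ and $\ra M$ are unchanged, and (2$'$), (3$'$) become (2), (3). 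Since (3)$\Rightarrow$(2) is trivial, it remains to prove (2)$\Rightarrow$(1) and (1)$\Rightarrow$(3).

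For both I would fix a decomposition $M|_{k[x]}=k[x]a\oplus F$ with $F$ free over $k[x]$ of rank $f$ and $k[x]a$ the nonprojective summand, and set $s=x^{i-1}a$, the socle generator of $k[x]a$ (so $s=a$ in type $[1]$ and $s=x^{n-2}a$ in type $[n-1]$); constant Jordan type gives $\dim M=i+nf$ and $\dim\ker x=\dim\ker y=f+1$, and a symmetric $k[y]$-picture with socle generator $s'$. Two facts drive everything: (i) $ys\in x^{n-1}M$ --- writing $ys=\mu s+w$ with $w\in x^{n-1}M$ and iterating $y$, the relation $y^n=0$ forces $\mu^n=0$, so $\mu=0$ --- together with its mirror $xs'\in y^{n-1}M$ for the $k[y]$-picture; and (ii) a count of $\Top M=M/(xM+yM)$ against the $k[x]$-generating set $\{a\}\cup(\text{generators of }F)$, showing that $a\in\ra M$ is precisely what makes $M$ already generated over $A$ by the generators of $F$, while dually $\Soc M=\ker x\cap\ker y=\Soc_{k[y]}(\ker x)$ is computed from the $k[y]$-structure on the $(f+1)$-dimensional space $\ker x=ks\oplus x^{n-1}M$, whose extension class $0\to x^{n-1}M\to\ker x\to k\to 0$ is detected by whether $ys=0$. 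Putting these together, I expect $\dim\Top M-\dim\Soc M$ to be positive exactly when $a\notin\ra M$ (top large) and $ys\neq0$ (socle small), and that this holds for every generator $a$ or for none --- which gives (2)$\Rightarrow$(1) and (1)$\Rightarrow$(3) simultaneously.

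The main obstacle is step (ii): the decomposition and the generator $a$ are not canonical, and one cannot reduce to the naive statement $\beta_0(M)=f+1$ (explicit syzygies of $k$ already violate this while satisfying (1)), so the passage from the structural conditions to the numerical inequality has to be carried out carefully and symmetrically in $x$ and $y$, using $xy=qyx$ to move between the $k[x]$- and $k[y]$-pictures and to control the action of $y$ on $\ker x$. Verifying that the conditions ``$a\notin\ra M$'' and ``$ys\neq0$'' are independent of all the choices made --- equivalently, that (2) and (3) really are equivalent --- together with the socle extension analysis, is where the real work lies; the final comparison of $\dim\Top M$ and $\dim\Soc M$ is then a short count.
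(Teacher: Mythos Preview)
Your approach is essentially the paper's own. The paper also rewrites $\beta_0(M)=\dim\Top M$ and $\beta_{-1}(M)=\dim\Top\Omega_A^{-1}(M)=\dim\Soc M$, and your socle computation via the short exact sequence $0\to x^{n-1}M\to\ker x\to k\to 0$ is exactly the paper's sequence~$(\dagger\dagger)$; the splitting criterion ``$ys=0$'' is the same in both.

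The one place where you hedge --- the $\Top M$ side of step~(ii) --- is precisely what the paper makes explicit. Rather than counting generators directly, it sets up the companion sequence
\[
0\longrightarrow k\longrightarrow M/xM\xrightarrow{\ \cdot\,x^{n-1}\ }{}_{\psi}(x^{n-1}M)\longrightarrow 0,
\]
where the kernel is spanned by the coset of $a$ and $\psi$ is the twist $y\mapsto q^{n-1}y$ needed to make the map $A$-linear. Since $x$ acts trivially on all three terms, $\dim\Top M=\dim\Top_{k[y]}(M/xM)$, and the sequence gives $\dim\Top M=\dim\Top_{k[y]}x^{n-1}M+[\text{splits}]$, with splitting precisely when $a\notin\ra M$. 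This is the ``common baseline'' you were looking for: both $\dim\Top M$ and $\dim\Soc M$ differ from $\dim\Top_{k[y]}x^{n-1}M=\dim\Soc_{k[y]}x^{n-1}M$ by $0$ or $1$, controlled respectively by $a\notin\ra M$ and $ys=0$. The inequality $\beta_0(M)>\beta_{-1}(M)$ then reads off immediately, and since $a$ was arbitrary you get $(2)\Leftrightarrow(3)$ for free. Your observation that $ys\in x^{n-1}M$ is not needed separately --- it is built into the fact that $(\dagger\dagger)$ is a sequence of $A$-modules --- and the primed statements are handled in the paper by the phrase ``a completely similar argument'', which amounts to your $x\leftrightarrow y$ swap.
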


\begin{proof}
Let $a$ be a generator for the nonprojective component of $M$ as a $k[x]$-module, and consider the surjective map $M/xM \to x^{n-1}M$ induced by multiplication by $x^{n-1}$. Note that both the modules are $A$-modules, since $y$ $q$-commutes with $x$. If we twist the module $x^{n-1}M$ with the automorphism $\psi \colon A \to A$ given by $x \mapsto x$ and $y \mapsto q^{n-1}y$, then the map above induces a homomorphism $M/xM \to {_{\psi}(x^{n-1}M)}$ of $A$-modules. This gives an exact sequence
\begin{equation*}\label{seq:first}
0 \to k \to M/xM \to {_{\psi}(x^{n-1}M)} \to 0 \tag{$\dagger$}
\end{equation*}
over $A$, where the kernel of the map $M/xM \to {_{\psi}(x^{n-1}M)}$ is of dimension one and generated by the coset of $a$. Now consider the socle of $M$ as a $k[x]$-module, i.e.\ $\Soc_{k[x]}M = \{ m \in M \mid xm=0 \}$. Again, since $y$ $q$-commutes with $x$, this is an $A$-module. There is an exact sequence
\begin{equation*}\label{seq:second}
0 \to x^{n-1}M \to \Soc_{k[x]}M \to k \to 0 \tag{$\dagger \dagger$}
\end{equation*}
over $A$, where the one-dimensional module is generated by $a$ if the Jordan type of $M$ is $[1]$, and by $x^{n-2}a$ if the type is $[n-1]$.

Note that in both of the exact sequences, the element $x$ acts trivially on the modules. Consequently, we can view the modules involved as just $k[y]$-modules. For such an $A$-module $L$, the top $L / \ra L$ is just $L/yL$, and so $\dim_k \Top L = \dim_k \Soc_{k[y]} L$. From the sequence (\ref{seq:first}) we therefore obtain
\begin{eqnarray*}
\dim_k \Top M & = & \dim_k \Top M/xM \\
& = & \dim_k \Soc_{k[y]} M/xM \\
& \ge & \dim_k \Soc_{k[y]} {_{\psi}(x^{n-1}M)} \\
& = & \dim_k \Soc_{k[y]} x^{n-1}M \\
& = & \dim_k \Top x^{n-1}M
\end{eqnarray*}
with a strict inequality if and only if the sequence splits. This happens if and only if $a \notin \ra M$, and in this case $\dim_k \Top M = \dim_k \Top x^{n-1}M +1$. On the other hand, if
$$0 \to M \hookrightarrow P \to \Omega_A^{-1}(M) \to 0$$
is exact with $P$ projective, then from the sequence (\ref{seq:second}) we obtain 
\begin{eqnarray*}
\dim_k \Top \Omega_A^{-1}(M) & = & \dim_k \Soc Q \\
& = & \dim_k \Soc M \\
& = & \dim_k \Soc_{k[y]} \left ( \Soc_{k[x]}M \right ) \\
& \ge & \dim_k \Soc_{k[y]} x^{n-1}M \\
& = & \dim_k \Top x^{n-1}M
\end{eqnarray*}
Again, the inequality is strict if and only if the sequence (\ref{seq:second}) splits, which happens if and only if $a \in \Soc M$ when the stable Jordan type of $M$ is $[1]$, and if and only if $x^{n-2}a \in \Soc M$ when the type is $[n-1]$. In other words, the sequence splits if and only if $ya=0$ when the stable Jordan type is $[1]$, and if and only if $yx^{n-2}a =0$ when the type is $[n-1]$. In this case, $\dim_k \Top \Omega_A^{-1}(M) = \dim_k \Top x^{n-1}M +1$. 

From what we have proved, we see that condition (1) holds if and only if $\dim_k \Top M = \dim_k \Top x^{n-1}M +1$ and $\dim_k \Top \Omega_A^{-1}(M) = \dim_k \Top x^{n-1}M$. This happens if and only if (\ref{seq:first}) splits but (\ref{seq:second}) does not, and this is equivalent to condition (2) and (3), since $a$ was an arbitrary generator for the nonprojective component of $M$ as a $k[x]$-module. A completely similar argument shows that (1), (2') and (3') are also equivalent.
\end{proof}

Before we proceed, we make the following definition.

\begin{definition}
Let $M$ be an indecomposable nonprojective $A$-module of stable constant Jordan type either $[1]$ or $[n-1]$. 

(1) The module $M$ satisfies the \emph{rank property with respect to $x$}, abbreviated (RP$x$), if there exists a generator $a$ for the nonprojective component of $M$ over $k[x]$, with the following properties: if the stable Jordan type of $M$ is $[1]$, then $a \notin \ra M$ and $y^{n-1}a \neq 0$, whereas if the stable Jordan type of $M$ is $[n-1]$, then $y^{n-1}x^{n-2}a \neq 0$.

(2) Likewise, $M$ satisfies the \emph{rank property with respect to $y$}, abbreviated (RP$y$), if there exists a generator $b$ for the nonprojective component of $M$ over $k[y]$, with the following properties: if the stable Jordan type of $M$ is $[1]$, then $b \notin \ra M$ and $x^{n-1}b \neq 0$, whereas if the stable Jordan type of $M$ is $[n-1]$, then $y^{n-2}x^{n-1}b \neq 0$.

(3) If $M$ satisfies both (RP$x$) and (RP$y$), then we say that it satisfies the \emph{rank property}, abbreviated (RP).
\end{definition}

\begin{remark}\label{rem:RPgenerator}
If a module $M$ satisfies (RP$x$), then \emph{every} generator for the nonprojective component over $k[x]$ has the defining properties from part (1) of the definition. To see this, suppose that $a$ is as in the definition, and take any other generator $a'$ for the nonprojective component over $k[x]$. If the stable Jordan type of $M$ is $[1]$, then $a' = a + x^{n-1}m$ for some $m \in M$. Then since $a$ does not belong to $\ra M$, neither can $a'$. Moreover, since $y^{n-1}x^{n-1}M =0$, we see that $y^{n-1}a' = y^{n-1}(a+ x^{n-1}m) = y^{n-1}a \neq 0$. If the stable Jordan type of $M$ is $[n-1]$, then $a' = a + xm$ for some $m \in M$. In this case $y^{n-1}x^{n-2}a' = y^{n-1}x^{n-2}(a+xm) = y^{n-1}x^{n-2}a \neq 0$. Thus in both cases the generator $a'$ also has the defining properties from the rank property definition. Similarly, if $M$ satisfies (RP$y$), then every generator for the nonprojective component over $k[y]$ has the defining properties from part (2) of the definition. In the proofs to come, we shall be using these facts without further mention.
\end{remark}

As the terminology suggests, the rank property should of course have something to do with ranks. This is the following result.

\begin{proposition}\label{prop:RPimpliesWRP}
If an indecomposable nonprojective $A$-module satisfies either \emph{(RP$x$)} or \emph{(RP$y$)}, then $\beta_0(M) > \beta_{-1}(M)$.
\end{proposition}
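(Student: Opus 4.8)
The plan is to reduce the statement to Proposition~\ref{prop:ranks} by showing that the rank property, as defined, is a strictly stronger condition than the properties listed in parts (2) and (2') of that proposition. More precisely, Proposition~\ref{prop:ranks} tells us that $\beta_0(M) > \beta_{-1}(M)$ is equivalent to the existence of a generator $a$ for the nonprojective $k[x]$-component with $a \notin \ra M$ and with $ya \neq 0$ (type $[1]$) or $yx^{n-2}a \neq 0$ (type $[n-1]$); and it is equivalent to the corresponding statement (2') with a generator $b$ over $k[y]$. So it suffices to verify that (RP$x$) implies condition~(2), and that (RP$y$) implies condition~(2'); either one gives the conclusion.

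Consider first the case that $M$ satisfies (RP$x$). If the stable Jordan type is $[1]$, the definition of (RP$x$) gives a generator $a$ of the nonprojective $k[x]$-component with $a \notin \ra M$ and $y^{n-1}a \neq 0$. Since $y^{n-1}a \neq 0$ forces $ya \neq 0$ (as $y^{n-1}a = y^{n-2}(ya)$), this $a$ satisfies condition~(2), and we are done by Proposition~\ref{prop:ranks}. If the stable Jordan type is $[n-1]$, the definition gives a generator $a$ with $y^{n-1}x^{n-2}a \neq 0$; again $y^{n-1}x^{n-2}a \neq 0$ implies $yx^{n-2}a \neq 0$, so $a$ satisfies condition~(2) of Proposition~\ref{prop:ranks}, giving the conclusion. (I should also note that in the $[n-1]$ case, condition~(2) of Proposition~\ref{prop:ranks} additionally requires $a \notin \ra M$; but a generator of the nonprojective $k[x]$-component of a module of type $[n-1]$ automatically lies outside $\ra M$, since the nonprojective summand $M_{n-1}$ has its top, which is $1$-dimensional over $k[x]$ hence generated by $a$, not contained in $\ra M$ — this is the standard fact that the generator of an indecomposable $k[x]$-summand of dimension $\geq 1$ represents a nonzero top element. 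Alternatively one invokes that condition (2) and (3) in Proposition~\ref{prop:ranks} are about arbitrary generators, and Remark~\ref{rem:RPgenerator} already pins down which generators work.) The case that $M$ satisfies (RP$y$) is handled symmetrically, replacing $x$ by $y$ and condition~(2) by condition~(2'): a generator $b$ with $x^{n-1}b \neq 0$ has $xb \neq 0$ (type $[1]$), and a generator with $y^{n-2}x^{n-1}b \neq 0$ has $y^{n-2}xb \neq 0$ (type $[n-1]$), so (2') of Proposition~\ref{prop:ranks} is met.

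The only genuine content here is the elementary observation that the hypotheses of (RP$x$)/(RP$y$) are obtained from those of Proposition~\ref{prop:ranks} by applying one further power of $y$ (resp.\ $x$), so a nonvanishing statement with the higher power trivially implies the one with the lower power. There is essentially no obstacle: the work was already done in establishing Proposition~\ref{prop:ranks}, and this proposition is just the observation that (RP) is designed to be a convenient strengthening of the ``$\beta_0 > \beta_{-1}$'' criterion. The one point requiring a sentence of care is the $\ra M$ clause in the $[n-1]$ case, which is why I would phrase that part explicitly rather than leave it implicit.
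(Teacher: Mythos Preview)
Your overall strategy matches the paper's: reduce to Proposition~\ref{prop:ranks} by checking that (RP$x$) implies condition~(2) there (and symmetrically for (RP$y$)). The type $[1]$ case and the implication $y^{n-1}x^{n-2}a \neq 0 \Rightarrow yx^{n-2}a \neq 0$ in the type $[n-1]$ case are fine and agree with the paper.

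The gap is in your justification that $a \notin \ra M$ in the $[n-1]$ case. Your argument conflates the $k[x]$-radical with the $A$-radical: the fact that $a$ generates the nonprojective $k[x]$-summand only tells you $a \notin xM$, but $\ra M = xM + yM$, and nothing you have said rules out $a \in yM$ (or more generally $a \in xM + yM$). Your alternative appeal to Remark~\ref{rem:RPgenerator} does not help either, since in the $[n-1]$ case the definition of (RP$x$) asserts only $y^{n-1}x^{n-2}a \neq 0$ and says nothing directly about $\ra M$. In fact this clause is not automatic: Proposition~\ref{prop:ranks} shows that $a \in \ra M$ can occur (it corresponds exactly to the sequence~($\dagger$) failing to split), so a separate argument is required.

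The paper supplies the missing step as follows. Since $M$ is indecomposable nonprojective, one has $y^{n-1}x^{n-1}M = 0$. If $a \in \ra M$, write $a = xa_1 + ya_2$; then $y^{n-1}x^{n-2}a = y^{n-1}x^{n-1}a_1 + y^{n}x^{n-2}a_2$ (up to powers of $q$), and both terms vanish by $y^{n-1}x^{n-1}M = 0$ and $y^n = 0$. This contradicts (RP$x$), so $a \notin \ra M$. With this correction your argument is complete and coincides with the paper's.
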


\begin{proof}
Let $M$ be a module that satisfies (RP$x$), and let $a$ be any generator for the nonprojective component of $M$ over $k[x]$. Note first that $M$ is not projective, and so $y^{n-1}x^{n-1}M=0$. If the stable constant Jordan type of $M$ is $[1]$, then by definition of the rank property, the element $a$ is not contained in $\ra M$, and $y^{n-1}a \neq 0$. Therefore $ya$, must be nonzero. If the stable constant Jordan type is $[n-1]$, then by definition $y^{n-1}x^{n-2}a \neq 0$, and in particular $yx^{n-2}a$ must be nonzero. In addition, also in this case the element $a$ cannot belong to $\ra M$: if it did, then we could write $a = xa_1 + ya_2$ for some $a_1, a_2 \in M$, giving $y^{n-1}x^{n-2}a =0$ since $y^n=0$ and $y^{n-1}x^{n-1}M=0$.

We have shown that both in the case the stable constant Jordan type of $M$ is $[1]$, and in the case it is $[n-1]$, condition (3) of Proposition \ref{prop:ranks} holds. It follows that $\beta_0(M) > \beta_{-1}(M)$. A similar proof applies if $M$ satisfies (RP$y$).
\end{proof}

If the stable constant Jordan type of the module is $[n-1]$, then the converse also holds, under some extra conditions. To prove this, we need the first part of the following lemma.

\begin{lemma}\label{lem:multiplicationbyx}
Let $\psi \colon A \to A$ be the automorphism defined by $x \mapsto x$ and $y \mapsto q^{-1}y$, and $\phi \colon A \to A$ the automorphism defined by $x \mapsto qx$ and $y \mapsto y$. Then for every $A$-module $M$ with $\dim_k \sthom_A(M,M)=1$ and ${_{\psi}M} \simeq M \simeq {_{\phi}M}$, and every integer $i$, the following hold.

(1) For every surjective homomorphism $d \colon F \to \Omega_A^i(M)$ with $F$ free, there exist homomorphisms $h_x \colon {_{\psi}\Omega_A^i(M)} \to F$ and $h_y \colon {_{\phi}\Omega_A^i(M)} \to F$, such that the composition $d \circ h_x \colon {_{\psi}\Omega_A^i(M)} \to \Omega_A^i(M)$ is multiplication by $x$, and the composition $d \circ h_y \colon {_{\phi}\Omega_A^i(M)} \to \Omega_A^i(M)$ is multiplication by $y$.

(2) For every injective homomorphism $i \colon \Omega_A^i(M) \to F$ with $F$ free, there exist homomorphisms $h_x \colon F \to {_{\psi^{-1}}\Omega_A^i(M)}$ and $h_y \colon F \to {_{\phi^{-1}}\Omega_A^i(M)}$, such that the composition $h_x \circ i \colon \Omega_A^i(M) \to {_{\psi^{-1}}\Omega_A^i(M)}$ is multiplication by $x$, and the composition $h_y \circ i \colon \Omega_A^i(M) \to {_{\phi^{-1}}\Omega_A^i(M)}$ is multiplication by $y$.
\end{lemma}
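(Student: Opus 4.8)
The plan is to reduce both parts to a single assertion: \emph{under the hypotheses, ``multiplication by $x$'', regarded as an $A$-homomorphism between two suitably twisted copies of $W:=\Omega_A^i(M)$, is stably trivial}, and likewise for $y$. Given this, the stated factorizations drop out from projectivity (resp.\ injectivity) of free $A$-modules. One may assume $M$ is not projective, since otherwise $W$ is projective and every assertion is trivial; in fact $\dim_k\sthom_A(M,M)=1$ already forces $M$ to be non-projective. First I would record the elementary observation that for every $A$-module $L$ the $k$-linear map $m\mapsto xm$ is $A$-linear both as a map ${}_\psi L\to L$ and as a map $L\to{}_{\psi^{-1}}L$, and that $m\mapsto ym$ is $A$-linear both as a map ${}_\phi L\to L$ and as a map $L\to{}_{\phi^{-1}}L$; each of these is just an identity such as $x\psi(a)=ax$, checked on the generators $x,y$ and extended by multiplicativity. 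Since ${}_\psi M\simeq M\simeq{}_\phi M$ and $\sthom_A(M,M)=k$, one obtains ${}_{\psi^j}W\simeq W\simeq{}_{\phi^j}W$ and $\sthom_A(W,W)=k$ for all $j\in\mathbb Z$, and hence every stable Hom space of the form $\sthom_A({}_{\psi^j}W,{}_{\psi^{j-1}}W)$ is one-dimensional and spanned by the class of an honest isomorphism (which is nonzero in $\stmod A$ precisely because $W$ is not projective).

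To prove the key assertion, write $\mu_j\colon{}_{\psi^j}W\to{}_{\psi^{j-1}}W$ for the homomorphism ``multiplication by $x$'' just produced. The twist-by-$\psi$ functor is an auto-equivalence of $\stmod A$ --- it fixes underlying vector spaces and maps, and sends projectives to projectives since ${}_\psi A\simeq A$ --- and it carries $\mu_j$ to $\mu_{j+1}$. Hence the $\mu_j$ are either all stably zero or all stably nonzero. In the latter case each $\mu_j$ is a nonzero element of a one-dimensional stable Hom space containing an isomorphism, hence is itself a stable isomorphism, so the composite $\mu_1\circ\cdots\circ\mu_n\colon{}_{\psi^n}W\to W$ is a stable isomorphism and in particular stably nonzero. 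But this composite is literally the map $m\mapsto x^n m$, which is zero because $x^n=0$ in $A$ --- a contradiction. Therefore $\mu_1\colon{}_\psi W\to W$ is stably zero, i.e.\ multiplication by $x$ on $W$ factors through a projective module. The same argument, with $\phi$ in place of $\psi$ for the ``$y$'' versions and with the chain $W\to{}_{\psi^{-1}}W\to\cdots\to{}_{\psi^{-n}}W$ for the ``$L\to{}_{\psi^{-1}}L$'' versions, handles the three remaining maps.

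It remains to convert stable triviality into the stated factorizations. For (1), write multiplication by $x$ on $W$ as ${}_\psi W\xrightarrow{\,p\,}P\xrightarrow{\,d'\,}W$ with $P$ projective; since $d\colon F\to W$ is surjective and $P$ is projective, $d'$ lifts to $\tilde d\colon P\to F$ with $d\tilde d=d'$, and then $h_x:=\tilde d\,p$ satisfies $d\circ h_x=$ multiplication by $x$. One constructs $h_y$ in exactly the same way. For (2), recall that over the self-injective algebra $A$ free modules are injective; write multiplication by $x$ as $W\xrightarrow{\,i'\,}Q\xrightarrow{\,q\,}{}_{\psi^{-1}}W$ with $Q$ injective; since $i\colon W\to F$ is a monomorphism, $i'$ extends to $\tilde i\colon F\to Q$ with $\tilde i\, i=i'$, and then $h_x:=q\,\tilde i$ satisfies $h_x\circ i=$ multiplication by $x$, and similarly for $h_y$. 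The crux of the whole argument is the stable-triviality step: the mechanism is ``nilpotency versus one-dimensionality'' (a stably nonzero map in a one-dimensional stable Hom space spanned by an isomorphism must be a stable isomorphism, yet the pertinent map, composed with itself $n$ times, vanishes since $x^n=0$), and the delicate point is that these iterates live between the \emph{different} twisted modules ${}_{\psi^j}W$, so one really does need the twist functor --- and hence the hypothesis ${}_\psi M\simeq M\simeq{}_\phi M$ --- to carry stable triviality along the chain.
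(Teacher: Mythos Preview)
Your proof is correct and follows the same overall strategy as the paper: reduce to showing that multiplication by $x$ (resp.\ $y$) is stably trivial as a map between suitably twisted copies of $W=\Omega_A^i(M)$, then use projectivity (resp.\ injectivity) of $F$ to extract the desired factorizations. The one difference is in the core step: the paper argues directly that $\mu_x$ is not an isomorphism and hence, in a one-dimensional stable Hom space spanned by the class of an isomorphism, must be stably zero, whereas you chain together the twisted multiplications $\mu_1,\dots,\mu_n$ and invoke $x^n=0$ to reach the same contradiction --- a slightly more explicit route that avoids appealing to the fact that a stable isomorphism between indecomposable non-projectives is an honest isomorphism.
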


\begin{proof}
Consider the module $\Omega_A^i(M)$. When we twist a minimal projective resolution of a module by an automorphism $\theta$, the result is a minimal projective resolution of the corresponding twisted module, hence ${_{\theta}  \Omega_A^i(M)} \simeq  \Omega_A^i( {_{\theta}M} )$. Consequently, there are isomorphisms ${_{\psi}  \Omega_A^i(M)} \simeq \Omega_A^i( {_{\psi}M} ) \simeq \Omega_A^i(M)$ and ${_{\phi}  \Omega_A^i(M)} \simeq \Omega_A^i( {_{\phi}M} ) \simeq \Omega_A^i(M)$, due to the assumptions on $M$. Also, since $\sthom_A( \Omega_A^i(M), \Omega_A^i(M) ) \simeq \sthom_A(M,M)$, the vector space $\sthom_A( \Omega_A^i(M), \Omega_A^i(M) )$ is one-dimensional. This shows that the module $\Omega_A^i(M)$ satisfies the same assumptions as $M$, and so it suffices to prove the lemma for the latter.

For the first part, note that the automorphisms $\psi$ and $\phi$ are precisely the ones we have to twist with in order for multiplication by $x$ to induce an $A$-homomorphism $\mu_x \colon {_{\psi}M} \to M$, and multiplication by $y$ to induce an $A$-homomorphism $\mu_y \colon {_{\phi}M} \to M$
$$\mu_x ( y \cdot m) = \mu_x (q^{-1}ym) = q^{-1}xym = yxm = y \cdot \mu_x(m)$$
$$\mu_y ( x \cdot m) = \mu_y (qxm) = qyxm = xym = x \cdot \mu_y(m)$$
Since $M \simeq {_{\psi}M}$, there is an isomorphism $\sthom_A({_{\psi}M},M) \simeq \sthom_A(M,M)$, and these are one-dimensional vector spaces by assumption. In the stable module category, an isomorphism ${_{\psi}M} \to M$ is nonzero, and so $\mu_x$, which is not an isomorphism, must be zero in $\sthom_A({_{\psi}M},M)$. Thus $\mu_x$ factors through some free $A$-module, and therefore also through $F$. The same argument works for the map $\mu_y$.

The second part is proved similarly. Note that since ${_{\psi}M} \simeq M \simeq {_{\phi}M}$, there are also isomorphisms ${_{\psi^{-1}}M} \simeq M \simeq {_{\phi^{-1}}M}$.
\end{proof}

\begin{proposition}\label{prop:WRPimpliesRP}
Let $M$ be an indecomposable $A$-module of stable constant Jordan type $[n-1]$, and suppose that $\dim_k \sthom_A(M,M)=1$ and ${_{\psi}M} \simeq M \simeq {_{\phi}M}$, where $\psi$ and $\phi$ are the automorphisms from \emph{Lemma \ref{lem:multiplicationbyx}}. Then $M$ satisfies \emph{(RP)} if $\beta_0(M) > \beta_{-1}(M)$.
\end{proposition}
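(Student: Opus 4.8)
The plan is to prove the converse of Proposition~\ref{prop:RPimpliesWRP} in the stable-Jordan-type-$[n-1]$ case by running the argument of Proposition~\ref{prop:ranks} backwards and then using Lemma~\ref{lem:multiplicationbyx} to upgrade the conclusion of (3) from Proposition~\ref{prop:ranks} to the stronger statement (RP$x$) (and (RP$y$)). First I would record what $\beta_0(M)>\beta_{-1}(M)$ buys us: by Proposition~\ref{prop:ranks}, for any generator $a$ of the nonprojective component of $M$ over $k[x]$ we have $a\notin\ra M$ and $yx^{n-2}a\neq 0$ (the condition for type $[n-1]$); dually, for any generator $b$ over $k[y]$ we have $b\notin\ra M$ and $y^{n-2}xb\neq 0$. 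The goal is to strengthen $yx^{n-2}a\neq 0$ to $y^{n-1}x^{n-2}a\neq 0$, and $y^{n-2}xb\neq 0$ to $y^{n-2}x^{n-1}b\neq 0$.

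The key step is to use part (1) of Lemma~\ref{lem:multiplicationbyx} applied to $i=0$, so $\Omega_A^0(M)=M$. Take a minimal surjection $d\colon F\to M$ with $F$ free (so $F=P_0$ and $\Ker d=\Omega_A^1(M)$). The lemma gives $h_x\colon {_\psi M}\to F$ with $d\circ h_x=\mu_x$ (multiplication by $x$). Now I would analyze $h_x$ on the top: since $x^{n-2}a$ generates (a translate of) the nonprojective $k[x]$-summand in degree $n-2$, and since the nonprojective component has type $[n-1]$ so $x^{n-1}a=0$ over $k[x]$ inside that summand but $x^{n-2}a\neq 0$, the element $\mu_x(x^{n-2}\tilde a)=x^{n-1}\tilde a$... — more precisely, I would track a lift $\tilde a\in F$ of $a$, observe $d(x^{n-2}\tilde a)=x^{n-2}a\neq 0$ hence $x^{n-2}\tilde a\notin\Ker d=\Omega_A^1(M)\subseteq\ra F$ is false; rather $x^{n-2}\tilde a$ is a nonzero element of $\ra^{n-2}F$ mapping onto $x^{n-2}a$. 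The point of introducing $h_x$ is that the composite $d\circ h_x=\mu_x$ lets one compare the action of $y^{n-1}$ on $x^{n-2}a\in M$ with the action of $y^{n-1}$ on the free module $F$, where $y^{n-1}$ is \emph{injective} on the top-generated rank-one $k[y]$-summands. Concretely, because $F$ is free, $y^{n-1}$ is nonzero on every generator of $F$ modulo $\ra$, and one shows $h_x$ sends a suitable generator of ${_\psi M}$ associated to $a$ to an element of $F$ outside $\ra F$ (using $a\notin\ra M$, which forces the relevant component of $h_x$ to be "surjective onto a free summand"), whence $y^{n-1}x^{n-2}a=d(y^{n-1}h_x(\cdots))\neq 0$ provided the free summand survives $d$. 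The bookkeeping here — identifying exactly which free summand of $F$ is hit and checking $d$ does not kill the class $y^{n-1}(\text{that generator})$ — is the part that needs care; it is essentially the same trick as tensoring up from $k[u_\lambda]$ used elsewhere, but implemented on the level of the automorphism-twisted maps.

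I expect the main obstacle to be exactly this last bookkeeping step: translating "$h_x$ is nonzero on the $a$-component modulo radical" into "$y^{n-1}x^{n-2}a\neq 0$ in $M$" cleanly, without circularity, since $h_x$ is only defined up to the choice of factorization of $\mu_x$ through $F$, and one must pin down its behavior on tops using minimality of the resolution and the hypothesis $\dim_k\sthom_A(M,M)=1$ (which guarantees $\mu_x$ is, up to scalar, the \emph{unique} stably-trivial map ${_\psi M}\to M$, so its factorization is rigid enough). I would then repeat the argument verbatim with the roles of $x$ and $y$ exchanged, using $\phi$ in place of $\psi$ and the dual generator $b$, to obtain (RP$y$); together (RP$x$) and (RP$y$) give (RP), completing the proof. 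A secondary subtlety is that the equivalences in Proposition~\ref{prop:ranks} were stated for $\Omega_A^{-1}(M)$ via an injective hull, so I would also invoke part (2) of Lemma~\ref{lem:multiplicationbyx} to handle the cosyzygy side symmetrically, ensuring consistency of the "$[n-1]$" conventions throughout.
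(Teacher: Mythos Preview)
Your overall setup is the same as the paper's: start from Proposition~\ref{prop:ranks} to get $a\notin\ra M$, take the projective cover $d\colon P\to M$, and invoke Lemma~\ref{lem:multiplicationbyx}(1) to obtain $h\colon {_\psi M}\to P$ with $d\circ h=\mu_x$. However, the mechanism you propose for extracting $y^{n-1}x^{n-2}a\neq 0$ from this data does not work. You claim that ``$h_x$ sends a suitable generator of ${_\psi M}$ associated to $a$ to an element of $F$ outside $\ra F$.'' This is false: since $d\circ h=\mu_x$, we have $d(h(a))=xa\in\ra M$, and because $d$ is a projective cover it induces an isomorphism $P/\ra P\to M/\ra M$, so $h(a)\in\ra P$. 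Thus the freeness-of-$P$ argument you sketch (``$y^{n-1}$ is nonzero on every generator of $F$ modulo $\ra$'') cannot be applied to $h(a)$ directly, and the step ``provided the free summand survives $d$'' has no content as written.

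The paper's actual argument proceeds differently and uses an ingredient you are missing. One chooses a lift $p\in P$ of $a$ with the additional property that $x^{n-1}p$ generates the one-dimensional nonprojective $k[x]$-component of $\Omega_A^1(M)$ (this uses that $\Omega_A^1(M)$ has stable type $[1]$). Since $a\notin\ra M$ one has $p\notin\ra P$, hence $y^{n-1}x^{n-1}p\neq 0$ in the free module $P$. Now $h(a)-xp\in\Ker d=\Omega_A^1(M)$; call it $u$. From $x^{n-1}a=0$ one gets $x^{n-1}u=0$, so $u\in\Omega_A^1(M)\cap xP$. The crucial structural fact is
\[
\Omega_A^1(M)\cap xP=\Span_k\{x^{n-1}p\}+x\,\Omega_A^1(M),
\]
which follows from the choice of $p$. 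Writing $u=\alpha x^{n-1}p+xv$ with $v\in\Omega_A^1(M)$ and applying $y^{n-1}x^{n-2}$ to $h(a)=xp+u$ gives $h(y^{n-1}x^{n-2}a)=y^{n-1}x^{n-1}p+y^{n-1}x^{n-1}v$. If $y^{n-1}x^{n-2}a=0$, then $y^{n-1}x^{n-1}v=-y^{n-1}x^{n-1}p\neq 0$; but $v\in\Omega_A^1(M)$, which is indecomposable nonprojective, so $y^{n-1}x^{n-1}\Omega_A^1(M)=0$, a contradiction. The two pieces your sketch lacks are the decomposition of $\Omega_A^1(M)\cap xP$ and the final contradiction via nonprojectivity of $\Omega_A^1(M)$; without them the argument does not close.
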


\begin{proof}
We show that $M$ satisfies (RP$x$); the proof that $M$ satisfies (RP$y$) is similar. Let $a$ be any generator for the nonprojective component of $M$ over $k[x]$. To show that $M$ satisfies (RP$x$), we must show that $y^{n-1}x^{n-2}a$ is nonzero. To do this, we consider the projective cover
$$0 \to \Omega_A^1(M) \hookrightarrow P \xrightarrow{d} M \to 0$$
of $M$. Note that $\Omega_A^1(M)$ is also nonprojective and indecomposable, since $M$ is.

View the short exact sequence as a sequence of $k[x]$-modules. As such, the module $M$ is isomorphic to $\Span_k \{ a, xa, \dots, x^{n-2}a \} \oplus F_1$ for some free module $F_1$, where $\Span_k \{ a, xa, \dots, x^{n-2}a \}$ is the nonprojective component of dimension $n-1$. The module $P$ is free, and $\Omega_A^1(M)$ is isomorphic to $k \oplus F_2$ for some free module $F_2$. Consequently, there is an element $p$ of $P$ with $d(p)=a$, and such that $x^{n-1}p$ generates the one-dimensional nonprojective component of $\Omega_A^1(M)$. Note that since $a \in M \setminus \ra M$ by Proposition \ref{prop:ranks}, the element $p$ does not belong to $\ra P$, hence $y^{n-1}x^{n-1}p$ is nonzero. Moreover, since $a$ generates the nonprojective component of $M$ over $k[x]$, the element $x^{n-1}a$ is zero.

Now we use the first part of Lemma \ref{lem:multiplicationbyx}, with the automorphism $\psi$ defined there. Let $h \colon {_{\psi}M} \to P$ be an $A$-homomorphism having the property that the composition $d \circ h \colon {_{\psi}M} \to M$ equals the multiplication map $\mu_x$ given by $x$. Then $d \left ( h(a)-xp \right ) =0$, so that $h(a) = xp + u$ for some $u \in \Omega_A^1(M)$. Note that $x^{n-1}u=0$ since $x^{n-1}u = x^{n-1} \left ( h(a) - xp \right ) = h(x^{n-1}a)$ and $x^{n-1}a =0$, hence $u$ belongs to $xP$. Now since $x^{n-1}p$ generates the nonprojective component of $\Omega_A^1(M)$ over $k[x]$, there is an equality
$$\Omega_A^1(M) \cap xP = \Span_k \{ x^{n-1}p \} + x \Omega_A^1(M)$$
hence we can write $u$ as $u = \alpha x^{n-1}p + xv$ for some scalar $\alpha$ and element $v \in \Omega_A^1(M)$. This gives 
\begin{eqnarray*}
h \left ( y^{n-1}x^{n-2}a \right ) & = & y^{n-1}x^{n-2} h(a) \\
& = & y^{n-1}x^{n-2} \left ( xp + u \right ) \\
& = & y^{n-1}x^{n-2} \left ( xp + \alpha x^{n-1}p + xv \right ) \\
& = & y^{n-1}x^{n-1}p + y^{n-1}x^{n-1}v
\end{eqnarray*}
and so if $y^{n-1}x^{n-2}a$ were zero, then $y^{n-1}x^{n-1}p = - y^{n-1}x^{n-1}v$. As $y^{n-1}x^{n-1}p$ is nonzero, this would imply that $y^{n-1}x^{n-1} \Omega_A^1(M)$ was nonzero, but this is impossible since $\Omega_A^1(M)$ is a nonprojective indecomposable $A$-module. This shows that $y^{n-1}x^{n-2}a$ must be nonzero.
\end{proof}

The idea behind the proof of the main result in this section is to start with a syzygy having the rank property (RP), and then show that its cosyzygies get smaller and smaller until we end up with the module $k$. The first step in this reduction process is the following result.

\begin{proposition}\label{prop:reduce[n-1]}
Let $M$ be an indecomposable $A$-module of stable constant Jordan type $[n-1]$, and suppose that $\dim_k \sthom_A(M,M)=1$ and ${_{\psi}M} \simeq M \simeq {_{\phi}M}$, where $\psi$ and $\phi$ are the automorphisms from \emph{Lemma \ref{lem:multiplicationbyx}}. Then if $M$ satisfies \emph{(RP)}, the module $\Omega_A^{-1}(M)$ is either isomorphic to $k$, or satisfies \emph{(RP)}.
\end{proposition}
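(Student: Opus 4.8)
The plan is to show that the cosyzygy $N = \Omega_A^{-1}(M)$ still satisfies all the standing hypotheses of the proposition and, if it is nonprojective and not isomorphic to $k$, satisfies (RP). First I would record that $N$ is again indecomposable, nonprojective (unless zero), and of stable constant Jordan type, by Corollary \ref{cor:syzygies}: since $M$ has stable type $[n-1]$, the module $\Omega_A^{-1}(M)$ has stable type $[(n-1)]$ reversed, i.e.\ $[1]$ --- wait, more carefully, the reversal of the stable type $[n-1]$ (as a sequence $(d_1,\dots,d_{n-1}) = (0,\dots,0,1)$) is $(1,0,\dots,0)$, so $\Omega_A^{-1}(M)$ has stable type $[1]$. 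So the dichotomy to aim for is: either $N \cong k$, or $N$ is an indecomposable nonprojective module of stable type $[1]$ satisfying (RP). I would also check that $\dim_k \sthom_A(N,N) = 1$ (immediate since $\sthom_A(\Omega_A^{-1}(M),\Omega_A^{-1}(M)) \cong \sthom_A(M,M)$) and that ${_{\psi}N} \cong N \cong {_{\phi}N}$ (since twisting commutes with cosyzygy, just as in the proof of Lemma \ref{lem:multiplicationbyx}).

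Next, the key leverage comes from Proposition \ref{prop:RPimpliesWRP} and Proposition \ref{prop:WRPimpliesRP}: since $M$ satisfies (RP), we have $\beta_0(M) > \beta_{-1}(M)$. I would translate this into information about $N = \Omega_A^{-1}(M)$. The point is that the minimal complete resolution of $M$ reindexed shows $\beta_0(N) = \beta_{-1}(M)$ and $\beta_{-1}(N) = \beta_{-2}(M)$, while $\beta_{1}(N) = \beta_{0}(M)$; so $\beta_1(N) > \beta_0(N)$, i.e.\ $\beta_0(\Omega_A^1(N)) > \beta_{-1}(\Omega_A^1(N)) = \beta_0(N)$... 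I need to be careful which way the inequality I want points. What I actually want is to apply Proposition \ref{prop:WRPimpliesRP} (or a $[1]$-analogue) to $N$, which requires $\beta_0(N) > \beta_{-1}(N)$, i.e.\ $\beta_{-1}(M) > \beta_{-2}(M)$. That does \emph{not} follow formally from $\beta_0(M) > \beta_{-1}(M)$. So instead the correct route is: use Proposition \ref{prop:ranks} directly for $N$, identifying the explicit generator of the nonprojective $k[x]$-component of $N$ coming from the exact sequence $0 \to M \hookrightarrow P \xrightarrow{d} N \to 0$ wait no --- from $0 \to M \to P \to \Omega_A^{-1}(M) \to 0$ with $P$ projective. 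I would pick a generator $a$ for the nonprojective $k[x]$-component of $M$ witnessing (RP$x$), so $y^{n-1}x^{n-2}a \neq 0$, and trace through where its image lands in $P$ and then in $N$, using the explicit description of $\Omega_A^{-1}$ in terms of socles of $k[x]$- and $k[y]$-restrictions that appears in the proof of Proposition \ref{prop:ranks}.

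Concretely, the strategy mirrors the proof of Proposition \ref{prop:WRPimpliesRP}: embed $M$ in its injective (= projective) envelope $P$, use part (2) of Lemma \ref{lem:multiplicationbyx} to get maps $h_x, h_y$ from $P$ to twists of $M$ splitting multiplication by $x$ resp.\ $y$ off the embedding, and then compute the action of $y^{n-1}$ (in the type-$[1]$ case one needs $y^{n-1}b \neq 0$ for a suitable generator $b$, etc.) on the relevant generator of $N = P/M$. The nonvanishing $y^{n-1}x^{n-2}a \neq 0$ in $M$ should force, via the socle description, that either the resulting candidate generator of $N$ already lies deep enough in $N$ that the corresponding (RP) nonvanishing holds, or else a collapse occurs forcing $\Omega_A^{-1}(M) \cong k$ (the collapse being exactly the situation where the sequence ($\dagger\dagger$)-type analogue for $N$ degenerates). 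I would organize the case distinction around whether $\beta_{-1}(M) > \beta_{-2}(M)$ or $\beta_{-1}(M) = \beta_{-2}(M)$: in the first case Proposition \ref{prop:WRPimpliesRP} (applied to $N$, after checking its hypotheses and that its stable type is $[1]$ --- which may require the analogous $[1]$-version of that proposition, or a small separate argument) gives (RP) for $N$; in the second case I would show $N$ must be $1$-dimensional, hence $N \cong k$ (it is indecomposable and $\ra$ acts on it; a $1$-dimensional nonprojective indecomposable over a local algebra is the simple module).

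\textbf{Main obstacle.} The hard part will be the type-$[1]$ analogue of Proposition \ref{prop:WRPimpliesRP}: Proposition \ref{prop:WRPimpliesRP} is stated only for stable type $[n-1]$, but $\Omega_A^{-1}(M)$ has stable type $[1]$, so I cannot invoke it as a black box. I expect to need a parallel argument --- tracking a generator $p \in P$ with $d(p)$ equal to the nonprojective-component generator of $N$, writing $h_x(\text{generator}) = xp + u$ with $u \in M$, decomposing $u$ using $M \cap xP = \Span_k\{x^{n-1}p\} + xM$ (the type-$[1]$ shape of the intersection), and deducing the needed nonvanishing of $y^{n-1}$ applied to the $k[x]$-generator of $N$ from $y^{n-1}x^{n-1}p \neq 0$ together with $y^{n-1}x^{n-1}M = 0$ (which holds since $M$ is nonprojective indecomposable). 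Getting the bookkeeping of twists ($\psi, \phi$ and their inverses) consistent through the cosyzygy, and making sure the chosen generators of the $k[x]$- and $k[y]$-components are compatible (so that both (RP$x$) and (RP$y$) are verified for $N$), is the part that will require genuine care rather than routine computation.
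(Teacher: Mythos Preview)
Your proposal has a genuine gap, and where it gestures at the right direct argument it gets the mechanics backward.

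The case distinction on whether $\beta_{-1}(M) > \beta_{-2}(M)$ does not close. In the case $\beta_0(N) > \beta_{-1}(N)$ you want to invoke a ``$[1]$-analogue'' of Proposition~\ref{prop:WRPimpliesRP}, but no such analogue is available, and it does not follow from the same argument. For stable type $[1]$, Proposition~\ref{prop:ranks} shows that $\beta_0(N) > \beta_{-1}(N)$ is equivalent only to the generator $a$ satisfying $a \notin \ra N$ and $ya \neq 0$; the rank property (RP$x$) requires the strictly stronger $y^{n-1}a \neq 0$. Bridging from $ya \neq 0$ to $y^{n-1}a \neq 0$ is precisely the content of the proposition, so your case distinction is circular at this point. (Your other case, $\beta_0(N) \le \beta_{-1}(N)$, can in fact be handled: combined with $d(p) \notin \ra N$ it forces $yd(p)=0$ via Proposition~\ref{prop:ranks}, hence $N \cong k$; but you did not say this, and it still leaves the main case open.)

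The paper does not go through ranks at all. It works directly with the injective envelope $0 \to M \hookrightarrow P \xrightarrow{d} N \to 0$: choose $p \in P$ with $xp = a$ (so $d(p)$ generates the $k[x]$-nonprojective component of $N$), observe from (RP$x$) that $y^{n-1}x^{n-1}p \neq 0$ so $d(p) \notin \ra N$, and then split into the dichotomy $yd(p)=0$ (forcing $N \cong k$) versus $yd(p) \neq 0$. In the latter case one assumes $y^{n-1}d(p)=0$ and derives a contradiction using part~(2) of Lemma~\ref{lem:multiplicationbyx} with the map $h$ realizing multiplication by $y$ (not $x$): from $h(xp)=yxp$ one gets $u := h(p)-yp$ with $xu=0$, and from $y^{n-1}p \in M$ one gets $y^{n-1}u=0$; these two force $u = yx^{n-1}v$, and then $p' = p + x^{n-1}v$ satisfies $xp'=a$ but $yp' = h(p) \in M$, contradicting $yd(p) \neq 0$. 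Your sketch in the ``Main obstacle'' paragraph models itself on the proof of Proposition~\ref{prop:WRPimpliesRP}, which uses part~(1) of the lemma with $h \colon {_\psi M} \to P$ in the projective-cover direction; here the map goes the other way, $h \colon P \to {_{\phi^{-1}}M}$, and the relevant multiplication is by $y$, since what you must control is $y^{n-1}d(p)$.
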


\begin{proof}
We show that if $M$ satisfies (RP$x$), then $\Omega_A^{-1}(M)$ is either isomorphic to $k$, or satisfies (RP$x$); the (RP$y$)-version is similar. As in the previous proof, we decompose the module $M$ over $k[x]$ as $\Span_k \{ a, xa, \dots, x^{n-2}a \} \oplus F$ for some free module $F$, where $\Span_k \{ a, xa, \dots, x^{n-2}a \}$ is the nonprojective component of dimension $n-1$. By definition, as $M$ satisfies (RP$x$), the element $y^{n-1}x^{n-2}a$ is nonzero. Now consider the injective envelope
$$0 \to M \hookrightarrow P \xrightarrow{d} \Omega_A^{-1}(M) \to 0$$
of $M$, viewed as a sequence of $k[x]$-modules. Since $x^{n-1}a =0$, we can write $a$ as $a= xp$ for some $p \in P$, with $y^{n-1}x^{n-1}p$ nonzero since $y^{n-1}x^{n-2}a$ is. Thus $p$ does not belong to $\ra P$, and so $d(p) \notin \ra \Omega_A^{-1}(M)$ since $d$ is a projective cover. 

The element $d(p)$ generates the one-dimensional nonprojective component of $\Omega_A^{-1}(M)$ over $k[x]$, hence $xd(p) =0$. If also $yd(p) =0$, then this component is actually a direct summand of $\Omega_A^{-1}(M)$ as an $A$-module. Namely, since $d(p) \notin \ra \Omega_A^{-1}(M)$, there is a maximal submodule $W \subseteq \Omega_A^{-1}(M)$ with $d(p) \notin W$. The submodule generated by $d(p)$ is just $\Span_k \{ d(p) \}$, and since this does not intersect $W$, the $A$-module $\Omega_A^{-1}(M)$ decomposes as a direct sum $W \oplus \Span_k \{ d(p) \}$. But $\Omega_A^{-1}(M)$ is indecomposable over $A$, since $M$ is, and so in this case $\Omega_A^{-1}(M) \simeq k$.

This shows that if $\Omega_A^{-1}(M)$ is not isomorphic to $k$, then $yd(p)$ must be nonzero, that is, the element $yp$ cannot belong to $M$. We must now show that this forces $\Omega_A^{-1}(M)$ to satisfy (RP$x$). As the element $d(p)$ is a generator for the one-dimensional nonprojective component of $\Omega_A^{-1}(M)$ over $k[x]$, and we saw above that $d(p) \notin \ra \Omega_A^{-1}(M)$, it suffices to show that $y^{n-1}d(p)$ is nonzero.

Suppose, for a contradiction, that $y^{n-1}d(p) =0$. Then $y^{n-1}p \in \Ker d =M$. Now we use the second part of Lemma \ref{lem:multiplicationbyx}, with the automorphism $\phi$ defined there. There exists an $A$-homomorphism $h \colon P \to {_{\phi^{-1}}M}$ such that the composition $h \circ i \colon M \to {_{\phi^{-1}}M}$ is multiplication by $y$, where $i$ is the inclusion map $M \hookrightarrow P$. Since $xp$ belongs to $M$, it follows that  $h(xp) = yxp$, and so when we view ${_{\phi^{-1}}M}$ as a submodule of ${_{\phi^{-1}}P}$ we obtain
$$x \cdot h(p) = h(xp) = yxp = q^{-1}xyp = x \cdot yp$$
Then $x \cdot \left ( h(p) - yp \right ) =0$ in ${_{\phi^{-1}}P}$, so with $u = h(p) - yp$ we obtain $h(p) = yp + u$ with $x \cdot u =0$. From the above, the element $y^{n-1}p$ belongs to $M$, hence $y^{n-1}h(p) = h(y^{n-1}p) = y^np=0$. The expression $h(p) = yp + u$ then gives $y^{n-1} \cdot u =0$ in ${_{\phi^{-1}}P}$.

Since both $x \cdot u$ and $y^{n-1} \cdot u$ are zero in ${_{\phi^{-1}}P}$, the element $u$ can be written as $yx^{n-1}v$ for some element $v \in P$. Now consider the element $p' = p + x^{n-1}v$ in $P$. It does not belong to $\ra P$, since $p \notin \ra P$. Moreover, $yp' = yp + u = h(p)$, hence $yp'$ belongs to $M$. Finally, $xp' = xp =a$. The latter means that we can start the argument all over again, but this time with $p'$ instead of $p$. However, this time $yp'$ belongs to $M$, and this is impossible when $\Omega_A^{-1}(M)$ is not isomorphic to $k$.
\end{proof}

In the proof of the second and final step in the reduction process, we need the following lemma. Recall that an indecomposable nonprojective $A$-module $M$ of stable constant Jordan type $[1]$ satisfies (RP$x$) if the following hold: there exists a generator $a$ for the nonprojective component of $M$ over $k[x]$, such that $a \notin \ra M$ and $y^{n-1}a \neq 0$. By Remark \ref{rem:RPgenerator}, every such generator $a$ has these properties. Similarly, the module $M$ satisfies (RP$y$) if there exists a generator $b$ for the nonprojective component of $M$ over $k[y]$, with $b \notin \ra M$ and $x^{n-1}b \neq 0$. The lemma shows that when the assumptions from the last couple of results hold, then if we take \emph{any} element $m \in M \setminus \ra M$, either $x^{n-1}m$ or $y^{n-1}m$ must be nonzero.

\begin{lemma}\label{lem:generator[1]}
Let $M$ be an indecomposable $A$-module of stable constant Jordan type $[1]$, satisfying either \emph{(RP$x$)} or \emph{(RP$y$)}. Moreover, suppose that $\dim_k \sthom_A(M,M)=1$ and ${_{\psi}M} \simeq M \simeq {_{\phi}M}$, where $\psi$ and $\phi$ are the automorphisms from \emph{Lemma \ref{lem:multiplicationbyx}}. Then for every element $m \in M \setminus \ra M$, either $x^{n-1}m$ or $y^{n-1}m$ is nonzero.
\end{lemma}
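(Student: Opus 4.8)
The plan is to argue by contradiction: suppose there is an element $m \in M \setminus \ra M$ with $x^{n-1}m = 0 = y^{n-1}m$, and derive a contradiction with the hypothesis that $M$ satisfies \emph{(RP$x$)} (or \emph{(RP$y$)}). First I would record the structure of $M$ over the subalgebras $k[x]$ and $k[y]$: since $M$ has stable constant Jordan type $[1]$, over $k[x]$ we have $M \simeq \Span_k\{a\} \oplus F_x$ with $F_x$ free and $a \notin \ra M$ a generator of the nonprojective component, and similarly $M \simeq \Span_k\{b\}\oplus F_y$ over $k[y]$; by Remark \ref{rem:RPgenerator} we may take $a$ (resp. $b$) to be \emph{any} generator of the nonprojective component over $k[x]$ (resp. $k[y]$), and (RP$x$) gives $y^{n-1}a \neq 0$. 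The idea is that the given element $m$, being outside $\ra M$, can be taken as part of such a generating set, and I want to transfer the vanishing $x^{n-1}m = y^{n-1}m = 0$ into a contradiction with $y^{n-1}a \neq 0$.

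The key step is a decomposition argument: write $m$ in terms of the $k[x]$-decomposition. Since $m \notin \ra M$ and $a \notin \ra M$, either $m$ is (up to a unit and elements of $\ra M$) a generator of the nonprojective $k[x]$-component, or $m$ projects nontrivially into the free part $F_x$. In the first case $x^{n-1}m=0$ is automatic, but then $m$ is itself an admissible choice of generator $a$, so (RP$x$) forces $y^{n-1}m \neq 0$, contradicting the assumption — so we may assume $m$ has a nonzero component in the free $k[x]$-summand $F_x$. But if $m$ had a nonzero component in $F_x \cong (k[x]/x^n)^{\oplus}$, then $x^{n-1}m$ would be nonzero (the top socle element of a free summand), again a contradiction. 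Hence no such $m$ exists. The same argument run with the roles of $x$ and $y$ interchanged, using (RP$y$), handles the case where $M$ satisfies (RP$y$) instead; and one must check that the two hypotheses are in fact equivalent here, or simply note that (RP$x$) alone already gives the conclusion by the above.

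The main obstacle I expect is making precise the claim that ``$m \notin \ra M$ and $m$ has trivial image in the free $k[x]$-part implies $m$ generates the nonprojective component,'' and more delicately the converse handling: an element of $M \setminus \ra M$ could a priori have \emph{both} a nonzero nonprojective-component part and a nonzero free part, and one must see that in that mixed case $x^{n-1}m$ is still nonzero (because $x^{n-1}$ kills the $1$-dimensional nonprojective component but detects the socle of the free summand). This requires keeping careful track of how the $A$-module radical $\ra M = xM + yM$ interacts with the $k[x]$-grading, and it is here that the hypotheses $\dim_k \sthom_A(M,M) = 1$ and ${_{\psi}M}\simeq M \simeq {_{\phi}M}$ are presumably used — likely via Lemma \ref{lem:multiplicationbyx} to produce, from multiplication by $y$, a map factoring through a free module, which controls how the free $k[x]$-summand of $M$ sits inside a projective cover and hence pins down the position of $m$. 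I would organize the bookkeeping around a fixed $k[x]$-basis adapted to the decomposition $\Span_k\{a\}\oplus F_x$ and compute $x^{n-1}m$ and $y^{n-1}m$ componentwise.
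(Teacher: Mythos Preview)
Your dichotomy in the second paragraph is false, and this is the genuine gap. You claim that if $m$ has a nonzero component $f$ in the free $k[x]$-summand $F_x$ then $x^{n-1}m \neq 0$, but $x^{n-1}$ only sees the part of $f$ outside $xF_x$: if $m = \alpha a + xf'$ with $f' \in F_x$ and $x f' \notin x^{n-1}F_x$, then $x^{n-1}m = 0$ (so your second case fails), yet $xm = x^2 f' \neq 0$ (so $m$ is not a generator of the nonprojective component and your first case fails too). In this ``mixed'' situation you only get $y^{n-1}a = -y^{n-1}(xf')$, which is not an immediate contradiction. Your obstacle paragraph notices the difficulty but then asserts exactly the wrong resolution (``one must see that in that mixed case $x^{n-1}m$ is still nonzero''): it isn't.

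The paper's proof does not try to show $x^{n-1}m \neq 0$; instead it \emph{replaces} $m$ by a modified element $m'$ satisfying $m' \notin \ra M$, $xm' = 0$, and $y^{n-1}m' = 0$. Once $xm' = 0$, your argument works verbatim: $m'$ lies in the $k[x]$-socle, hence $m' = \alpha a + x^{n-1}w'$ with $\alpha \neq 0$, so $m'$ is a legitimate generator and (RP$x$) forces $y^{n-1}m' \neq 0$, a contradiction (equivalently, one gets $0 \neq y^{n-1}x^{n-1}w' \in y^{n-1}x^{n-1}M$, impossible for nonprojective indecomposable $M$). The entire substance of the lemma is the construction of $m'$, and this is where the extra hypotheses enter: one lifts $m$ to $p$ in a projective cover $P \xrightarrow{d} M$, applies Lemma~\ref{lem:multiplicationbyx}(2) to $\Omega_A^1(M)$ (not to $M$, and for multiplication by $x$, not $y$) to obtain $h \colon P \to {_{\psi^{-1}}\Omega_A^1(M)}$ with $h|_{\Omega_A^1(M)} = \mu_x$, and then shows $h(p) - xp = xv$ for some $v \in yP$; the element $p' = p + v$ has $xd(p') = d(h(p)) = 0$ and $y^{n-1}p' = y^{n-1}p$, giving $m' = d(p')$. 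Your speculation that Lemma~\ref{lem:multiplicationbyx} is relevant is correct, but the mechanism you describe (controlling how $F_x$ sits in a projective cover) is not what happens.
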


\begin{proof}
We prove the (RP$x$)-version. Take such an element $m \in M \setminus \ra M$, and suppose, for a contradiction, that $x^{n-1}m=0$ and $y^{n-1}m=0$. Consider a projective cover
$$0 \to \Omega_A^1(M) \hookrightarrow P \xrightarrow{d} M \to 0$$
of $M$, and let $p \in P$ be an element with $m = d(p)$. Since $m \notin \ra M$, the element $p$ does not belong to $\ra P$, and so $y^{n-1}x^{n-1}p \neq 0$. Note that since $0 = y^{n-1}d(p) = d ( y^{n-1}p)$, the element $y^{n-1}p$ belongs to $\Ker d = \Omega_A^1(M)$, and similarly so does $x^{n-1}p$.

We shall modify the element $p$ and obtain a new element $p' \in P$, with the property that $d(p') \in M \setminus \ra M$, and with
$$y^{n-1}p' = y^{n-1}p \hspace{5mm} \text{and} \hspace{5mm} xd(p')=0$$
It then follows that $y^{n-1}x^{n-1}M$ is nonzero, a contradiction since $M$ is indecomposable and not projective. To see this, let $z \in P$ be an element with the property that $d(z)$ generates the one-dimensional nonprojective component of $M$ over $k[x]$. Then $d(p') = \alpha d(z) + w$ for some scalar $\alpha \in k$ and some element $w$ in the $k[x]$-free summand. As $xd(p') =0 = xd(z)$, we see that $xw =0$, so $w = x^{n-1}w'$ for some $w'$ in the free summand. This implies that $w \in x^{n-1} M$, and since $d(p') \notin \ra M$, the scalar $\alpha$ must be nonzero. Now
$$0 = y^{n-1}m = y^{n-1}d(p) = d( y^{n-1}p) = d( y^{n-1}p') = y^{n-1}d(p') = \alpha y^{n-1} d(z) + y^{n-1}w$$
and so since $y^{n-1} d(z)$ is nonzero by (RP$x$), we see that $y^{n-1}x^{n-1}w' = y^{n-1}w \neq 0$. Thus we have produced a nonzero element in $y^{n-1}x^{n-1}M$.

To construct the element $p'$ with the desired properties, we apply the second part of Lemma \ref{lem:multiplicationbyx} to the module $\Omega_A^1(M)$: there exists an $A$-homomorphism $h \colon P \to {_{\psi^{-1}}\Omega_A^1(M)}$ such that the composition $h \circ i \colon \Omega_A^1(M) \to {_{\psi^{-1}}\Omega_A^1(M)}$ is multiplication by $x$, where $i$ is the inclusion map $\Omega_A^1(M) \hookrightarrow P$. Therefore, in ${_{\psi^{-1}}\Omega_A^1(M)}$, there are equalities
$$q^{n-1}y^{n-1}h(p) = y^{n-1} \cdot h(p) = h( y^{n-1}p) = xy^{n-1}p = q^{n-1}y^{n-1}xp$$
and this is a nonzero element since $y^{n-1}x^{n-1}p \neq 0$ and $x$ and $y$ $q$-commute. Viewing ${_{\psi^{-1}}\Omega_A^1(M)}$ as a submodule of ${_{\psi^{-1}}P}$, we see that
$$q^{n-1}y^{n-1} \left ( h(p) - xp \right ) = 0$$
in the latter, and then also in $P$ itself. Therefore, there is an element $u \in yP$ with $h(p) - xp =u$; say $u = yu'$ for some $u' \in P$.

Now we use the fact that $x^{n-1}p \in \Omega_A^1(M)$, and obtain
$$x^{n-1}u = x^{n-1} \left ( h(p) - xp \right ) = x^{n-1}h(p) = h( x^{n-1}p) = h \circ i ( x^{n-1}p ) = x^np=0$$
Thus $x^{n-1}yu' =0$ in $P$, so we can write $u' = xu_1 + y^{n-1}u_2$ for some $u_1,u_2 \in P$. This, in turn, gives
$$u = yu' = yxu_1 = x (q^{-1}yu_1) = xv$$
where $v = q^{-1}yu_1 \in yP$. Now consider the element $p + v$ in $P$; call it $p'$. It follows from the definition of $u$ and $v$ that
$$xp' = xp+xv = xp+u = h(p)$$
and
$$y^{n-1}p' = y^{n-1}p + y^{n-1}v = y^{n-1}p$$
since $v \in yP$. Moreover, since $y^{n-1}p$ belongs to $\Omega_A^1(M)$, so does $y^{n-1}p'$. Note also that $p'$ cannot belong to $\ra P$, for $p \notin \ra P$ whereas $v = q^{-1}yu_1 \in \ra P$. Therefore, since $d$ is a projective cover, the element $d(p')$ does not belong to $\ra M$. Moreover, as $xp'$ equals $h(p)$, and the latter belongs to ${_{\psi^{-1}}\Omega_A^1(M)}$ and therefore also to $\Omega_A^1(M) = \Ker d$, we see that $xd(p') = d(xp') =0$. We have shown that the element $p'$ has the desired properties.
\end{proof}

We can now prove the second and final step in the reduction process.

\begin{proposition}\label{prop:reduce[1]}
Suppose that $n \ge 3$, and let $M$ be an indecomposable $A$-module of stable constant Jordan type $[1]$. Furthermore, suppose that $\dim_k \sthom_A(M,M)=1$ and ${_{\psi}M} \simeq M \simeq {_{\phi}M}$, where $\psi$ and $\phi$ are the automorphisms from \emph{Lemma \ref{lem:multiplicationbyx}}. Then if $M$ satisfies \emph{(RP)}, so does $\Omega_A^{-1}(M)$.
\end{proposition}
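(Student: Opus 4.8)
The plan is to prove directly that $N:=\Omega_A^{-1}(M)$ satisfies both \emph{(RP$x$)} and \emph{(RP$y$)}; since the two are handled by the same argument with the roles of $x,y$ and of $\psi,\phi$ interchanged, I focus on \emph{(RP$x$)}. First note that $N$ inherits the running hypotheses: it is indecomposable non-projective since $M$ is, it has stable constant Jordan type $[n-1]$ by Corollary \ref{cor:syzygies}, and since $n\ge 3$ this type is not that of $k$, so $N\not\simeq k$; moreover $\dim_k\sthom_A(N,N)=1$ and ${_\psi N}\simeq N\simeq{_\phi N}$, as twisting commutes with $\Omega_A^{-1}$ and these hold for $M$.

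Let $a$ be the \emph{(RP$x$)}-generator of the $1$-dimensional nonprojective $k[x]$-component of $M$, so $a\notin\ra M$, $xa=0$ and $y^{n-1}a\ne 0$ (hence $y^{i}a\ne 0$ for all $0\le i\le n-1$), and fix the injective envelope $0\to M\hookrightarrow P\xrightarrow{d}N\to 0$. Since $\ker(x\colon P\to P)=x^{n-1}P$ we may write $a=x^{n-1}\tilde p$ with $\tilde p\in P$; then $y^{n-1}x^{n-1}\tilde p=y^{n-1}a\ne 0$ forces $\tilde p\notin\ra P$, hence $d(\tilde p)\notin\ra N$, while $x^{n-1}d(\tilde p)=d(a)=0$. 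Using the $k[x]$-decomposition of $N$, the hypothesis $n\ge 3$ and $d(\tilde p)\notin\ra N$, one checks that $x^{n-2}d(\tilde p)\ne 0$, so $d(\tilde p)$ is a generator of the $(n-1)$-dimensional nonprojective $k[x]$-component of $N$. By Remark \ref{rem:RPgenerator} it therefore suffices to show that $y^{n-1}x^{n-2}d(\tilde p)\ne 0$, that is, $y^{n-1}x^{n-2}\tilde p\notin M$.

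Suppose for contradiction $y^{n-1}x^{n-2}\tilde p\in M$. Apply Lemma \ref{lem:multiplicationbyx}(2) to $M$ to get $h\colon P\to{_{\psi^{-1}}M}$ whose composite with the inclusion $M\hookrightarrow P$ is multiplication by $x$, and set $m:=h(\tilde p)\in M$. Applying $h$ to the element $a=x^{n-1}\tilde p$ of $M$ yields $x^{n-1}m=h(a)=xa=0$; applying $h$ to $y^{n-1}x^{n-2}\tilde p\in M$, using $xy^{n-1}=q^{n-1}y^{n-1}x$ and cancelling the scalars produced by $\psi^{-1}$, yields $y^{n-1}x^{n-2}m=y^{n-1}a\ne 0$. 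In particular $x^{n-2}m\ne 0$ while $x^{n-1}m=0$, so expanding $m$ in the $k[x]$-decomposition of $M$ (which has type $[1]$) and using $n\ge 3$ to see that the $1$-dimensional summand is annihilated by $x^{n-2}$, we obtain $x^{n-2}m\in x^{n-1}M$, whence $y^{n-1}x^{n-2}m\in y^{n-1}x^{n-1}M$. But $y^{n-1}x^{n-1}$ spans the (simple) socle of $A$, so a nonzero value of $y^{n-1}x^{n-1}$ on an element of $M$ would produce a cyclic submodule isomorphic to the free---hence injective---module $A$, forcing $M$ to have a projective direct summand, which is impossible. Hence $y^{n-1}x^{n-1}M=0$, contradicting $y^{n-1}x^{n-2}m=y^{n-1}a\ne 0$. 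This establishes \emph{(RP$x$)} for $N$; the \emph{(RP$y$)} case is the mirror argument, starting from the \emph{(RP$y$)}-generator $b=y^{n-1}\tilde q$ of $M$, using the twist $\phi^{-1}$, invoking $x^{n-1}y^{n-1}M=0$, and calling on Lemma \ref{lem:generator[1]} to control the generator.

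The step I expect to be the main obstacle is the computation with $h$: one must track exactly the scalars (powers of $q$) coming both from the twists $\psi^{\pm1},\phi^{\pm1}$ and from moving powers of $x$ past powers of $y$ via $xy=qyx$, and one must check---repeatedly, and always using $n\ge 3$---that the elements built along the way are genuine (co)syzygy generators lying outside the radical.
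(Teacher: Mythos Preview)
Your argument is correct, and in fact it is cleaner than the paper's own proof. The set-up is identical: write $a=x^{n-1}\tilde p$, observe $d(\tilde p)\notin\ra N$ generates the nonprojective $k[x]$-summand of $N$, and assume $y^{n-1}x^{n-2}\tilde p\in M$ for a contradiction. From there the two proofs diverge. The paper sets $m:=y^{n-1}x^{n-2}\tilde p$ and uses the \emph{other} rank property (RP$y$) of $M$, together with $n\ge 3$, to write $m=y^{n-1}m'$; it then builds the element $m''=x(x^{n-2}\tilde p-m')\in M\setminus\ra M$ and invokes Lemma~\ref{lem:generator[1]} to reach a contradiction. Your route is more direct: you apply Lemma~\ref{lem:multiplicationbyx}(2) to $M$ itself (rather than via Lemma~\ref{lem:generator[1]}, which applies it to $\Omega^1_A(M)$), take $m:=h(\tilde p)$, and read off $x^{n-1}m=0$ and $y^{n-1}x^{n-2}m=y^{n-1}a\neq 0$ from the two elements $a$ and $y^{n-1}x^{n-2}\tilde p$ already known to lie in $M$. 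The $k[x]$-decomposition then forces $x^{n-2}m\in x^{n-1}M$, contradicting $y^{n-1}x^{n-1}M=0$. Your scalar bookkeeping with $\psi^{-1}$ is fine: both sides of the key identity pick up the same factor $q^{n-1}$, which cancels.

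A notable consequence of your approach is that it proves (RP$x$) for $\Omega_A^{-1}(M)$ using only (RP$x$) for $M$, whereas the paper's argument needs the full (RP). For the same reason, your throwaway reference to Lemma~\ref{lem:generator[1]} in the final sentence is unnecessary: by the evident $x\leftrightarrow y$, $\psi\leftrightarrow\phi$ symmetry, the (RP$y$) case is literally the mirror of what you wrote and does not require that lemma either. You should drop that clause.
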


\begin{proof}
As before, we only show that $\Omega_A^{-1}(M)$ satisfies (RP$x$), since the (RP$y$)-version is completely similar. However, this time we need the fact that the module $M$ satisfies \emph{both} (RP$x$) and (RP$y$).

As a $k[x]$-module, we can decompose $M$ as $\Span_k \{ a \} \oplus F$ for some free module $F$, where $a$ generates the one-dimensional nonprojective component. The rank property (RP$x$) then gives $a \notin \ra M$ and $y^{n-1}a \neq 0$. As in the previous proof, consider the injective envelope
$$0 \to M \hookrightarrow P \xrightarrow{d} \Omega_A^{-1}(M) \to 0$$
of $M$, viewed as a sequence of $k[x]$-modules. Since $xa =0$, we can write $a$ as $a= x^{n-1}p$ for some $p \in P$, and then since $y^{n-1}x^{n-1}p = y^{n-1}a \neq 0$, this element $p$ does not belong to $\ra P$. Then $d(p)$ does not belong to $\ra \Omega_A^{-1}(M)$, and it generates the nonprojective component of dimension $n-1$ over $k[x]$. By definition of the rank property (RP$x$), we must show that $y^{n-1}x^{n-2}d(p)$ is nonzero.

Suppose, for a contradiction, that $y^{n-1}x^{n-2}d(p) =0$. Then $y^{n-1}x^{n-2}p \in \Ker d =M$; call this element $m$. We shall show that $m = y^{n-1}m'$ for some other element $m' \in M$. To do this, note first that both $ym$ and $x^2m$ are zero. Now consider $M$ as a module over $k[y]$; as such, it decomposes as $\Span_k \{ b \} \oplus F'$ for some free module $F'$, where $b$ generates the one-dimensional nonprojective component. Since $ym=0$, we can write $m$ as $m= \alpha b + y^{n-1}m'$ for some scalar $\alpha$ and some $m' \in F'$. Furthermore, since $M$ satisfies (RP$y$), the element $x^{n-1}b$ is nonzero. Therefore, if $\alpha$ is nonzero, then so is $x^{n-1}m$, since $x^{n-1}y^{n-1}m' =0$ as $M$ is not projective. But this cannot be the case, for we saw above that $x^2 m=0$, and by assumption $n \ge 3$. The scalar $\alpha$ must therefore be zero, and this shows that $m = y^{n-1}m'$.

Finally, consider the element $x^{n-2}p - m'$ in $P$; call it $p'$. Since $m' \in M$ and $x^{n-1}p =a \in M$, the element $xp'$ belongs to $M$; let us denote it by $m''$. This element does not belong to $\ra M$, for $m'' = a + xm'$, and $a \notin \ra M$ since $M$ satisfies (RP$x$). Therefore, by Lemma \ref{lem:generator[1]}, either $y^{n-1}m''$ or $x^{n-1}m''$ must be nonzero. As $xa=0$, it must be the case that $y^{n-1}m'' \neq 0$, but
$$y^{n-1}m'' = y^{n-1}x^{n-1}p - y^{n-1}xm' = q^{1-n} x \left ( y^{n-1}x^{n-2}p - y^{n-1}m' \right ) = q^{1-n}  x ( m-m)=0$$
This is a contradiction, and so $y^{n-1}x^{n-2}d(p)$ must be nonzero.
\end{proof}

We now prove the main result in this section. It characterizes the syzygies of the simple $A$-module $k$. As mentioned, the idea behind the proof is to start with a syzygy having the rank property (RP), and then show that its cosyzygies get smaller and smaller, using Proposition \ref{prop:RPimpliesWRP}, Proposition \ref{prop:reduce[n-1]}  and Proposition \ref{prop:reduce[1]}. We then end up with the module $k$ itself. 

\begin{theorem}\label{thm:syzygiesofk}
If the ground field $k$ is infinite, then for an indecomposable nonprojective $A$-module $M$, the following are equivalent:

(1) $M \simeq \Omega_A^i(k)$ for some $i \in \mathbb{Z}$;

(2) $M$ has constant stable Jordan type either $[1]$ or $[n-1]$. Moreover, $\sthom_A(M,M)$ is one-dimensional, and ${_{\psi}M} \simeq M$ for every homogeneous automorphism $\psi \colon A \to A$;

(3) $M$ has constant stable Jordan type either $[1]$ or $[n-1]$. Moreover, $\sthom_A(M,M)$ is one-dimensional, and ${_{\psi}M} \simeq M \simeq {_{\phi}M}$, where $\psi$ and $\phi$ are the automorphisms from \emph{Lemma \ref{lem:multiplicationbyx}}.
\end{theorem}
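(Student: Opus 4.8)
The implications $(1)\Rightarrow(2)\Rightarrow(3)$ are the easy directions, and I would dispatch them first. For $(1)\Rightarrow(2)$: by Corollary \ref{cor:syzygies} the syzygies and cosyzygies of $k$ have constant stable Jordan type $[1]$ (even $i$) or $[n-1]$ (odd $i$). Since $k$ is a simple module, $\sthom_A(k,k) \simeq k$ is one-dimensional, and $\sthom_A(\Omega_A^i(k),\Omega_A^i(k)) \simeq \sthom_A(k,k)$, so the endomorphism condition holds. Finally, for any homogeneous automorphism $\psi$, the twist ${_\psi k}$ is again the simple module, so ${_\psi k}\simeq k$, and twisting commutes with syzygies (as recalled before Lemma \ref{lem:homogeneoustwist}), giving ${_\psi \Omega_A^i(k)}\simeq\Omega_A^i({_\psi k})\simeq\Omega_A^i(k)$. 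The implication $(2)\Rightarrow(3)$ is immediate once one checks that $\psi$ and $\phi$ from Lemma \ref{lem:multiplicationbyx} are homogeneous automorphisms of $A$, which they visibly are.

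The substance is $(3)\Rightarrow(1)$. The plan is the reduction strategy flagged in the text: given $M$ as in (3), apply cosyzygies repeatedly until we reach $k$. First, replacing $M$ by a syzygy or cosyzygy if necessary (using Corollary \ref{cor:syzygies}, and noting that all the hypotheses in (3) — constant stable Jordan type, one-dimensional stable endomorphism ring, and ${_\psi(-)}$-, ${_\phi(-)}$-invariance — are preserved under $\Omega_A^{\pm1}$ via the argument at the start of the proof of Lemma \ref{lem:multiplicationbyx}), we may assume $M$ has stable Jordan type $[n-1]$ and that $\beta_0(M) > \beta_{-1}(M)$; the latter can be arranged because along a minimal complete resolution of a non-periodic module the Betti numbers are not eventually constant, so at some point $\beta_{i-1} > \beta_i$, and one shifts $M$ to that spot (if $M$ itself is periodic, it has complexity $1$, contradicting Theorem \ref{thm:complexitysyzygies}(1) unless $M$ is free — but also the truncated polynomial subalgebra analysis forces $M$ to be a syzygy of $k$ directly in the periodic $n=c=2$ situation, which we handle separately). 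By Proposition \ref{prop:WRPimpliesRP}, $M$ then satisfies (RP). Now iterate: Proposition \ref{prop:reduce[n-1]} says $\Omega_A^{-1}(M)$ is either $k$ or satisfies (RP); if it has stable type $[1]$ (which it does, since cosyzygy swaps the parity), Proposition \ref{prop:reduce[1]} (valid for $n\ge 3$) says $\Omega_A^{-1}$ of it again satisfies (RP), hence by Proposition \ref{prop:RPimpliesWRP} its $\beta_0 > \beta_{-1}$, and Proposition \ref{prop:WRPimpliesRP} applies to the next one with stable type $[n-1]$, and so on.

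The mechanism that forces termination is a dimension-count: each time we pass from $M$ to $\Omega_A^{-1}(M)$ while (RP) persists, one has $\beta_0(\Omega_A^{-1}(M)) = \beta_{-1}(M) < \beta_0(M)$, so the top-dimensions $\beta_0$ form a strictly decreasing sequence of positive integers and must reach the minimum value $1$; a module $N$ of stable constant Jordan type $[1]$ or $[n-1]$ with $\beta_0(N)=1$ has a one-dimensional top, hence is a quotient of $A$ by a submodule of $\ra$, and one checks using the rank property that the only such $N$ is $k$ itself (its top is simple and the (RP)/cosyzygy bookkeeping leaves no room for anything larger). Tracing back, $M\simeq\Omega_A^{j}(k)$ for the appropriate $j$. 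For the excluded case $n=2$: then $A = k[x,y]/(x^2,y^2)$, whose only indecomposable non-projective non-periodic modules are the syzygies of $k$ (as recalled in the proof of Theorem \ref{thm:ARcomponents}(1)), and any module of stable constant Jordan type $[1]=[n-1]$ is non-projective; the hypothesis $\sthom_A(M,M)$ one-dimensional rules out the periodic strings, so $M$ is a syzygy of $k$.

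The main obstacle is the termination/base-case analysis: showing rigorously that (RP) is genuinely inherited at \emph{every} stage (the two reduction propositions each cover only one parity and one must interleave them correctly, checking the hypothesis ${_\psi M}\simeq M\simeq{_\phi M}$ and $\dim_k\sthom_A(M,M)=1$ survive each application), and that the strictly decreasing Betti sequence cannot stabilize at a value $>1$ nor produce any indecomposable other than $k$ at the bottom. One must also be careful that the very first reduction — arranging stable type $[n-1]$ and $\beta_0 > \beta_{-1}$ simultaneously — is possible; this uses that $M$ is non-periodic (complexity $2$ by Theorem \ref{thm:complexitysyzygies}), so the Betti numbers $\beta_i(M)$ grow linearly and in particular $\beta_i < \beta_{i+1}$ for infinitely many $i$, giving the needed shift in the appropriate direction, after which the parity of the stable type is adjusted by one more shift if necessary.
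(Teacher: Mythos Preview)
Your overall strategy matches the paper's: dispatch $(1)\Rightarrow(2)\Rightarrow(3)$ as you do, then for $(3)\Rightarrow(1)$ locate a syzygy of stable type $[n-1]$ satisfying (RP), and iterate Propositions~\ref{prop:reduce[n-1]} and~\ref{prop:reduce[1]} (together with Proposition~\ref{prop:RPimpliesWRP}) to force a strictly decreasing sequence of Betti numbers that must terminate at $k$. The descent and termination are essentially as in the paper, though your phrasing ``reach the minimum value~$1$ \ldots\ the only such $N$ is $k$'' is not the actual mechanism: you never need to analyse the case $\beta_0=1$ directly, because Proposition~\ref{prop:reduce[n-1]} already offers the alternative ``$\Omega_A^{-1}(M)\simeq k$'' at every $[n-1]$-step, and the strict decrease of the $\beta_0$'s guarantees that this alternative eventually fires.

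The genuine gap is in producing the starting syzygy. You argue that complexity~$2$ makes the $\beta_i$ unbounded, hence $\beta_i<\beta_{i+1}$ for infinitely many $i$, and then propose to ``adjust the parity of the stable type by one more shift if necessary.'' But Proposition~\ref{prop:WRPimpliesRP}, the only tool converting a Betti inequality into (RP), applies solely to modules of stable type $[n-1]$; so you need some $j$ of a \emph{fixed} parity with $\beta_j>\beta_{j-1}$. Your soft argument does not give this: a sequence like $\beta_{2j}=\beta_{2j+1}=j+1$ is unbounded yet has $\beta_i>\beta_{i-1}$ only at even $i$, and shifting by one destroys the inequality. The paper closes this gap with a genuinely different argument: using that $A$ has finitely generated cohomology, one finds a degree-$2$ element $\eta$ in a polynomial subalgebra of $\HH^*(A)$ acting eventually injectively on $\Ext_A^*(M,A/\ra)$ (this is where the hypothesis that $k$ is infinite is used), which yields an eventually surjective degree-$(-2)$ chain map on the minimal resolution and hence surjections $\Omega_A^{i+2}(M)\twoheadrightarrow\Omega_A^i(M)$ for $i\gg0$. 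Non-periodicity then forces $\dim_k\Omega_A^{i+2}(M)>\dim_k\Omega_A^i(M)$, whence $\beta_{i+1}>\beta_i$ for \emph{all} large $i$, so one of the correct parity certainly exists. Without something of this strength your initial reduction does not go through.
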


\begin{proof}
Suppose that (1) holds. As we explained in the proof of Lemma \ref{lem:generator[1]}, whenever we take an $A$-module $X$ and automorphism $\psi \colon A \to A$, there is an isomorphism ${_{\psi}\Omega_A^1(X)} \simeq \Omega_A^1( {_{\psi}X} )$. Thus for every integer $i$, the modules ${_{\psi}\Omega_A^i(X)}$ and $\Omega_A^i( {_{\psi}X} )$ are isomorphic. This implies that
$${_{\psi}M} \simeq {_{\psi}\Omega_A^i(k)} \simeq \Omega_A^i( {_{\psi}k} ) \simeq \Omega_A^i(k) \simeq M$$
for every homogeneous automorphism $\psi$, since ${_{\psi}k} \simeq k$. Moreover, from the isomorphisms
$$\sthom_A(M,M) \simeq \sthom_A( \Omega_A^i(k), \Omega_A^i(k) ) \simeq \sthom_A(k,k)$$
we see that $\dim_k \sthom_A(M,M) =1$, and from Corollary \ref{cor:syzygies} we know that $M$ has constant stable Jordan type either $[1]$ or $[n-1]$. This shows that (2) holds, and since (2) trivially implies (3), we need to show that (3) implies (1). 

Suppose therefore that (3) holds, and take any syzygy $\Omega_A^i(M)$ of $M$. As above, all the properties from (3) also hold for this module: the vector space $\sthom_A( \Omega_A^i(M), \Omega_A^i(M) )$ is one-dimensional, there are isomorphisms ${_{\psi}\Omega_A^i(M)} \simeq \Omega_A^i(M) \simeq {_{\phi}\Omega_A^i(M)}$, and $\Omega_A^i(M)$ has constant stable Jordan type either $[1]$ or $[n-1]$. We shall show that there exists a syzygy of Jordan type $[n-1]$ satisfying the rank property (RP).

As mentioned before Theorem \ref{thm:complexitysyzygies}, the algebra $A$ has finitely generated cohomology. This means that its Hochschild cohomology ring $\HH^*(A)$ is Noetherian, and that $\Ext_A^*(X,Y)$ is a finitely generated $\HH^*(A)$-module for all $A$-modules $X$ and $Y$. As explained in \cite[Section 2]{BerghErdmann}, it follows from \cite[Corollary 3.5]{Oppermann2} that there exists a polynomial subalgebra $H = k[ \eta_1, \eta_2 ]$ of $\HH^*(A)$, with both $\eta_1$ and $\eta_2$ in degree $2$, and such that $\Ext_A^*(X,Y)$ is a finitely generated $H$-module for all $A$-modules $X$ and $Y$. In particular, $\Ext_A^*(M, A/ \ra )$ is finitely generated over $H$.

Since the ground field $k$ is infinite, the first part of the proof of \cite[Theorem 2.5]{Bergh1} shows that there exists a homogeneous element $\eta \in H$, of degree $2$, such that multiplication
$$\Ext_A^i(M, A/ \ra ) \xrightarrow{\cdot \eta} \Ext_A^{i+2}(M, A/ \ra )$$
is injective for $i \gg 0$. From \cite[Proposition 2.2]{Bergh1} and its proof, we see that this element $\eta$ gives rise to an eventually surjective chain map $f_{\eta} \colon  \mathbb{P} \to \mathbb{P}$ of degree $-2$, where $\mathbb{P}$ is the minimal projective resolution 
$$\cdots \to P_2 \xrightarrow{d_2} P_1 \xrightarrow{d_1} P_0$$
of $M$. For each $i \ge 0$, this chain map gives a commutative diagram
$$\xymatrix{
P_{i+3} \ar[r]^{d_{i+3}} \ar[d]^{f_{i+3}} & P_{i+2} \ar[r] \ar[d]^{f_{i+2}} & \Omega_A^{i+2}(M) \ar[r] & 0 \\
P_{i+1} \ar[r]^{d_{i+1}} & P_i \ar[r] & \Omega_A^i(M) \ar[r] & 0
}$$ 
with exact rows, a diagram that induces a map $g_{i+2} \colon \Omega_A^{i+2}(M) \to \Omega_A^i(M)$. As $f_i$ is surjective for all $i \gg 0$, so is $g_i$.

By Theorem \ref{thm:complexitysyzygies}, the complexity of the module $M$ is $2$. In particular, this means that $M$ is not periodic, and so the maps $g_i$ cannot be isomorphisms. Namely, if $\Omega_A^{i+2}(M)$ were isomorphic to $\Omega_A^i(M)$, then $\Omega_A^2(M)$ would be isomorphic to $M$. Thus for all $i \gg 0$, the map $g_{i+2}$ is surjective but not injective, and therefore $\dim_k \Omega_A^{i+2}(M) > \dim_k \Omega_A^i(M)$. Taking the alternating sum of the dimensions of the modules in the exact sequence
$$0 \to \Omega_A^{i+2}(M) \to P_{i+1} \xrightarrow{d_{i+1}} P_i \to \Omega_A^i(M) \to 0$$
we then see that $\dim_k P_{i+1} > \dim_k P_i$. This shows that $\beta_{i+1}(M) > \beta_i(M)$ for all $i \gg 0$. Now since every second syzygy of $M$ is of constant stable Jordan type $[n-1]$, we may now apply Proposition \ref{prop:WRPimpliesRP}: there exists an integer $i$ such that $\Omega_A^{i}(M)$ is of constant stable Jordan type $[n-1]$ and satisfies (RP). 

Let $i_0$ be an integer with the property that $\Omega_A^{i_0}(M)$ is of constant stable Jordan type $[n-1]$ and satisfies (RP), and such that $\beta_{i_0}(M)$ is minimal; by the above, such an integer exists. By Proposition \ref{prop:reduce[n-1]}, the module $\Omega_A^{i_0-1}(M)$ is either isomorphic to $k$, or satisfies (RP). We claim that it must be isomorphic to $k$. If not, then by Proposition \ref{prop:reduce[1]}, the module $\Omega_A^{i_0-2}(M)$ also satisfies (RP), since the stable Jordan type of $\Omega_A^{i_0-1}(M)$ is $[1]$. However, when we apply Proposition \ref{prop:RPimpliesWRP} to both $\Omega_A^{i_0}(M)$ and $\Omega_A^{i_0-1}(M)$, we see that there are strict inequalities $\beta_{i_0}(M) > \beta_{i_0-1}(M) > \beta_{i_0-2}(M)$. Since the module $\Omega_A^{i_0-2}(M)$ is of constant stable Jordan type $[n-1]$, this contradicts the minimality of $\beta_{i_0}(M)$. Consequently, the module $\Omega_A^{i_0-1}(M)$ must be isomorphic to $k$.
\end{proof}

\end{document}